\numberwithin{equation}{section}
\newcommand{\norm}[1]{\left\Vert#1\right\Vert}
\newcommand{\norme}[1]{\left\Vert{\hskip -2.7pt}\left\vert #1 \right\vert{\hskip -2.7pt}\right\Vert}
\newcommand{\abs}[1]{\left\vert#1\right\vert}
\newcommand{\pd}[1]{\left\langle #1\right\rangle}
\newcommand{\set}[1]{\left\{#1\right\}}
\newcommand{\til}[1]{\tilde{#1}}
\newcommand{\R}{\mathbb{R}}
\newcommand{\al}{\alpha}
\newcommand{\be}{\beta}
\newcommand{\De}{\Delta}
\newcommand{\ga}{\gamma}
\newcommand{\Ga}{\Gamma}
\newcommand{\na}{\nabla}
\newcommand{\Om}{\Omega}
\newcommand{\pa}{\partial}
\newcommand{\si}{\sigma}
\newcommand{\ze}{\zeta}
\newcommand{\dt}{\,\mathrm{d} t}
\renewcommand{\i}{{\rm\mathbf i}}
\newcommand{\pml}{\mathrm{PML}}
\newcommand{\tru}{\mathrm{tru}}
\DeclareMathOperator{\im}{{Im}}
\newcommand{\eq}[1]{\begin{align}#1\end{align}}
\newcommand{\eqn}[1]{\begin{align*}#1\end{align*}}
\newcommand{\diam}{\mathrm{diam}}
\title{A pure source transfer domain decomposition method for Helmholtz equations in unbounded domain}
\author{
 Yu Du\footnotemark[1]
 \thanks{Department of Mathematics, Xiangtan University, Hunan, 411105, China. ({\tt duyu@xtu.edu.cn}). The research of this author was supported in part by the Natural Science Foundation of China under grants 11601026 and the Hunan Provincial Natural Science Foundation of China (NO. 2019JJ50572)}
 \and
 Haijun Wu\footnotemark[2]
 \thanks{Department of Mathematics, Nanjing University, Jiangsu, 210093, P.R. China. ({\tt hjw@nju.edu.cn}). This research was partially supported by the Natural Science Foundation of China under grants 11525103 and 91130004.}
}
\begin{document}

\maketitle

\vspace{-1.4in}
\slugger{sinum}{200x}{xx}{x}{xxx--xxx}
\vspace{1.4in}

\setcounter{page}{1}

\begin{abstract} We propose a pure source transfer domain decomposition method (PSTDDM) for solving the truncated perfectly matched layer (PML) approximation in bounded domain of Helmholtz scattering problem. The method is a modification of the STDDM proposed by [Z. Chen and X. Xiang, SIAM J. Numer. Anal., 51 (2013), pp. 2331--2356]. After decomposing the domain into $N$ non-overlapping layers, the STDDM is composed of two series steps of sources transfers and wave expansions, where $N-1$ truncated PML problems on two adjacent layers and $N-2$ truncated half-space PML problems are solved successively. While the PSTDDM consists merely of two parallel source transfer steps in two opposite directions, and in each step $N-1$ truncated PML problems on two adjacent layers are solved successively. One benefit of such a modification is that the truncated PML problems on two adjacent layers can be further solved by the PSTDDM along directions parallel to the layers. And therefore, we obtain a block-wise PSTDDM on the decomposition composed of $N^2$ squares, which reduces the size of subdomain problems and is more suitable for large-scale problems. Convergences of both the layer-wise PSTDDM and the block-wise PSTDDM are proved for the case of constant wave number. Numerical examples are included to show that the PSTDDM gives good approximations to the discrete Helmholtz equations with constant wave numbers and can be used as an efficient preconditioner in the preconditioned GMRES method for solving the discrete Helmholtz equations with constant and heterogeneous wave numbers.
\end{abstract}

\begin{keywords}
	Helmholtz equation, large wave number, perfectly matched layer, source transfer, domain decomposition method, preconditioner
\end{keywords}

\begin{AMS}
	65N12, 
	65N15, 
	65N30, 
	78A40 
\end{AMS}

\setcounter{page}{1}

\section{Introduction}\label{sec_int} 	This paper is devoted to a domain decomposition method for the Helmholtz problem in the full space $\R^2$ with Sommerfeld radiation condition:
\begin{align}
	\De u + k^2 u =f \qquad     & \mbox{in } \R^2,\label{eq1.1a}   \\
	\abs{\frac{\pa u}{\pa r}-\i ku} =o(r^{-\frac12}) \qquad & \mbox{as } r=\abs{x}\rightarrow\infty.\label{eq1.1b}
\end{align}
where the wave number $k$ is positive and $f\in L^2(\R^2)$ having compact support. The proposed domain decomposition method is a modification of the source transfer domain decomposition method (STDDM) given in \cite{cx}, which can reduce further the size of subdomain problems, can be easily extended to solve three-dimensional Helmholtz scattering problems, and can be used as an efficient preconditioner for discrete Helmholtz problems with constant and heterogeneous wave numbers.

The Helmholtz problem \eqref{eq1.1a}-\eqref{eq1.1b} appears in various applications, such as acoustic, elastic, and electromagnetic scattering problems. Diverse numerical methods have been proposed or analyzed for Helmholtz equations with large wave numbers (see, e.g., \cite{Ainsworth04, bips95,bs00,dbb99,harari97, ib95a,ib97,melenk95a,ms10,ms11,dw,lw19,w,zw,dgmz12,fw09, fw11, mps13,cd03, ghp09,clx13,gm,M2014Localization,Peterseim2014,ov18, thompson06}). Due to the highly indefinite nature of the Helmholtz problem with large wave number, it is challenging to solve the linear algebraic systems resulting from its numerical discretization, which usually contain huge numbers of degrees of freedom, in particular for the cases of three dimensions or/and existing pollution effects \cite{bips95,bs00, dbb99, ib95a,ib97, thompson06}. Considerable efforts in the literature have been made on fast solvers, e.g., multigrid methods \cite{BL,cwx15,EEO} and domain decomposition methods \cite{BD,clx16,GMN,gz13,lj19,lx19}. Recently Engquist and Ying \cite{ey11b} introduced a new sweeping preconditioner for the iterative solution of the Helmholtz equation, which has linear application cost, and the preconditioned iterative solver converges in a number of iterations that is essentially independent of the number of unknowns or the frequency (see also \cite{ly16,eg16,Stolk2017,gz19}). Inspired by \cite{ey11b}, Chen and Xiang \cite{cx} proposed a source transfer domain decomposition method (STDDM) along with a rigorous convergence analysis, which can be used as an efficient preconditioner in the preconditioned GMRES method for solving discrete Helmholtz equations. After decomposing the domain into $N$ non-overlapping layers, the STDDM is composed of two series steps of sources transfers and wave expansions, where $N-1$ truncated PML problems on two adjacent layers and $N-2$ truncated half-space PML problems are solved successively. We also refer to \cite{Xiang2019} for a double STDDM.

In this paper we modify the STDDM in \cite{cx} to obtain a domain decomposition method that consists merely of two parallel source transfer steps in two opposite directions, and in each step $N-1$ truncated PML problems on two adjacent layers are solved successively.
Therefore, we call it ``pure source transfer domain decomposition method" (PSTDDM). Next we illustrate the ideas of PSTDDM by considering the Helmholtz problem \eqref{eq1.1a}--\eqref{eq1.1b}.
Let $B_l=\{x=(x_1,x_2)\in\R^2:\abs{x_1}<l_1,\abs{x_2}<l_2\}$. Assume that $f$ is supported in $B_l$. We divide the interval $(-l_1,l_1)$ into $N$ segments with the points $\ze_i=-l_1+(i-1)\De\ze$ where $\De\ze=2l_1/N$ and $1\le i\le N+1$. Set $\ze_0=-\infty, \ze_{N+2}=\infty$. Then we denote the layers by
\begin{align}
	\Omega_1 = & \set{ x=(x_1,x_2)\in\R^2: x_1<\ze_{2} },\notag     \\
	\Omega_i = & \{ x=(x_1,x_2)\in\R^2:\ze_i<x_1<\ze_{i+1} \},\ i=2,\cdots,N-1,\label{eq_Nlayers} \\
	\Omega_{N} = & \set{ x=(x_1,x_2)\in\R^2: x_1>\ze_{N} }.\notag
\end{align}
Clearly, $\supp f\subset\cup_{i=1}^N\Omega_i$. Let $f_i=f$ in $\Omega_i$ and $f_i=0$ elsewhere. Let $\bar{f}_1^+=f_1$ and $\bar{f}_N^-=f_N$. Similar to \cite{cx}, the key idea is that by defining the source transfer operators $\Psi_i^\pm$ in the sense that
\begin{align*}
	 & \int_{\Omega_i} \bar{f}_i^+(y)G(x,y)dy = \int_{\Omega_{i+1}} \Psi_{i+1}^+(\bar{f}_i^+)(y)G(x,y)dy\quad \forall x\in\cup_{j=i+2}^{N+1}\Omega_j, \\
	 & \int_{\Omega_i} \bar{f}_i^-(y)G(x,y)dy = \int_{\Omega_{i-1}} \Psi_{i-1}^-(\bar{f}_i^-)(y)G(x,y)dy\quad \forall x\in\cup_{j=0}^{i-2}\Omega_j,
\end{align*}
and letting $\bar{f}_{i\pm1}^\pm=f_{i\pm1}+\Psi_{i\pm1}^\pm(\bar{f}_i)$. Clearly, we have for any $x\in\Omega_i$
\begin{align}\label{eqi1}
	u(x) & = \bigg(-\int_{\Omega_i}f_i(y)G(x,y)dy-\int_{\Omega_{i-1}}\bar{f}_{i-1}^+(y)G(x,y)dy\bigg) + \\
	 & \bigg(-\int_{\Omega_{i+1}}\bar{f}_{i+1}^-(y)G(x,y)dy\bigg).\notag
\end{align}
Here, $G(x,y)$ is the Green's function of the problem \eqref{eq1.1a}--\eqref{eq1.1b}. Observing \eqref{eqi1}, we know that $u(x)$ in $\Omega_i$ consists of two independent parts. The first part only involves the sources in $\Omega_i$ and $\Omega_{i-1}$ and the second one only involves the source in $\Omega_{i+1}$. Thus they could be solved independently by using the perfectly matched layer (PML) method right outside $\Omega_{i-1}\cup\Omega_i$ and $\Omega_i\cup\Omega_{i+1}$, respectively. The above procedure leads to the PSTDDM of Algorithm~\ref{alg1} in Subsection~\ref{subs1}. The PSTDDM with truncated PML is listed in Algorithm~\ref{alg2} and its convergence is proved in Subsection~\ref{sec3}. Moreover, the PML problems on two adjacent layers in the above PSTDDM can be further solved by the same PSTDDM but along directions parallel to the layers. And therefore, we obtain some block-wise PSTDDM on the decomposition composed of $N^2$ squares (see Algorithms~\ref{alg3} and \ref{alg4} in Section~\ref{sec4}), which further reduces the size of subdomain problems and is more suitable for large-scale problems, in particular in 3D. For the sake of clarity, we sometimes call the PSTDDM in Algorithms~\ref{alg1} and \ref{alg2} the layer-wise PSTDDM.

The PML is a mesh termination technique of effectiveness, simplicity and flexibility in computational wave propagation. After the pioneering work of B\'{e}renger \cite{ber94,ber96}, various constructions of PML absorbing layers have been proposed and many theoretical results about Helmholtz problem, such as those about the convergence and stability, have been studied \cite{bp,cl03,cw03,cw08,cz11,kp10,ls98,ls01}. In this paper, the uniaxial PML methods will be used (see e.g. \cite{bp,cx}).

The remainder of this paper is organized as follows. In Section \ref{sec2}, we first recall the uniaxial PML and related properties. Then we introduce the layer-wise PSTDDM with non-truncated PML and show that it just produces the exact solution by solving PML problems on two adjacent layers. Thirdly, we give the layer-wise PSTDDM with truncated PML and prove that it converges exponentially with respect to the medium parameters or the thickness of the PML. Section \ref{sec4} is devoted to the block-wise PSTDDM. In particular, an error estimate with explicit dependences on $k$ and $N$ for the block-wise PSTDDM with truncated PML is derived. In Section~\ref{sec5}, the layer-wise and block-wise PSTDDM with truncated PML are discretized by bilinear finite element methods. Numerical experiments are presented to show the accuracies of the discretized PSTDDM and the performances of using the discretized PSTDDM as preconditioners in the preconditioned GMRES when the wave number or the number of subdomains (layers or blocks) increases.

Throughout the paper, for any bounded domain $U$, we shall use the standard Sobolev space $H^s(U)$, its norm and inner product,
and refer to \cite{bs08} for their definitions.
But, we will often use the following weighted norms
\begin{align}
	\norme{v}_U   & = \left(\norm{\na v}_{L^2(U)}^2+\norm{k v}_{L^2(U)}^2\right)^{\frac12},        \\
	\norm{v}_{H^{\frac12}(\pa U)} & = \left(d_U^{-1}\norm{v}_{L^2(\pa U)}^2+\abs{v}_{{H^{\frac12}(\pa U)}}^2\right)^{\frac12},\quad\forall v\in H^1(U), \label{eq_vhga12}
\end{align}
where $d_U:=\diam(U)$. For any $f\in H^1(U)'$ the dual space of $H^1(U)$, define its norm
\begin{align*}
	\norme{f}_U^*:=\sup_{0\neq v\in H^1(U)}\frac{\pd{f,v}}{\norme{v}_U}.
\end{align*}
It is clear that $\norme{f}_U^*\leq k^{-1} \norm{f}_{L^2(U)}$ if $f\in L^2(U)$.

For the simplicity of notation, we shall frequently use
$C$ for a generic positive constant in most of the subsequent estimates,
which is independent of $k$. We will also often write $A\lesssim B$ and $B\gtrsim A$ for the inequalities $A\leq C B$ and $B\geq CA$ respectively. $A\eqsim B$ is used for an equivalent statement when both $A\lesssim B$ and $B\lesssim A$ hold. We suppose that $k\gtrsim 1$.

\section{Source transfer layer by layer} \label{sec2}
In this section, we first recall the PML formulations for the Helmholtz scattering problem \eqref{eq1.1a}--\eqref{eq1.1b}. Then we introduce the PSTDDM for the PML problem in the whole space as well as in the bounded truncated domain by dividing the computational domain into layers and doing source transfer layer by layer.
\subsection{The PML problems}\label{ssecpml}
In this subsection, we introduce the PML to truncate the unbounded domain. We adopt the uniaxial PML method \cite{bp,cw,cx,kp10}. The model medium properties are defined by
\begin{align*}
	 & \al_1(x_1)=1+\i\si_1(x_1),\ \al_2(x_2)=1+\i\si_2(x_2)     \\
	 & \si_j(t)=\si_j(-t) \text{ and } 0\le \si_j(t)\le\si_0\ \mathrm{for}\ t\in\R^2,  \\
	 & \si_j=0\ \mathrm{for}\ \abs{t}\leq l_j,\ \si_j=\si_0>0\ \mathrm{for}\ \abs{t}\geq \bar{l}_j.
\end{align*}
where $\si_j(x_j)\in C^1(\R^2)$ are piecewise smooth functions, $\bar{l}_j>l_j$ is fixed and $\si_0$ is a constant. We remark that the requirements on $\si_j$ are used in \cite{cx} to estimate the inf--sup constants of the PML problems.
We introduce the PML by complex coordinate stretching \cite{cjm97,cx} as follows.
For $x=(x_1,x_2)^T$, define $\tilde{x}(x)=(\tilde{x}_1(x_1),\tilde{x}_2(x_2))$ by
\begin{align}
	\tilde{x}_j(x_j) = \int_0^{x_j} \al_j(t) dt = x_j + \i\int_0^{x_j}\si_j(t)dt,\ j=1,2. \label{eq_defPML}
\end{align}
Let $\tilde\De$ denote the Laplacian with respect to $\tilde x$. Then the PML equation is derived from $\tilde{\De}\tilde{u}+k^2\tilde{u}=f$ in $\R^2$ by using the chain rule:
\begin{align}\label{pmleq}
	J^{-1}\na\cdot(A\na\tilde{u})+k^2\tilde{u}=f\quad \mathrm{in}\ \R^2,
\end{align}
with the radiation condition that $\tilde u$ is bounded in $\R^2$. Here $A(x)=\diag\left(\frac{\al_2(x_2)}{\al_1(x_1)},\frac{\al_1(x_1)}{\al_2(x_2)}\right)$ and $J(x)=\al_1(x_1)\al_2(x_2)$. The weak formulation of \eqref{pmleq} reads as: Find $\tilde u\in H^1(\R^2)$ such that
\begin{align}
	(A\na \tilde{u},\na v) - k^2 (J\tilde{u},v) = - \pd{Jf,v}\ \forall v\in H^1(\R^2), \label{eq_exactsol}
\end{align}
where $(\cdot,\cdot)$ is the inner product in $L^2(\R^2)$ and $\pd{\cdot,\cdot}$ is the duality pairing between $H^1(\R^2)'$ and $H^1(\R^2)$.

We recall that the exact solution of the Helmholtz problem \eqref{eq1.1a}--\eqref{eq1.1b} can be written as the acoustic volume potential. Let $G(x,y):=\frac{\i}{4}H_0^{(1)}(k\abs{x-y})$ be the fundamental solution of the Helmholtz problem
where $H_0^{(1)}$ is the first kind Hankel function of order zero.
Then, the solution of \eqref{eq1.1a} is given by
\begin{align}
	u(x)=-\int_{\R^2}f(y) G(x,y)dy\quad \forall x\in\R^2.
\end{align}
Similarly,	the solution to the PML problem can be expressed as
\begin{align}\label{tu}
	\tilde{u}(x)=u(\tilde{x})=- \int_{\R^2} f(y)G(\tilde{x},\tilde{y}) dy\quad \forall x\in\R^2.
\end{align}
Note that $\tilde{y}=y$ and $\tilde{u}=u$ in $B_l$ since $f$ is supported inside $B_l$. For any $z\in\mathbb{C}\setminus[0,+\infty)$, denote by $z^\frac12$ the analytic branch of $\sqrt{z}$ such that $\mathrm{Re}(z^\frac12)>0$. Define the ``complex distance" \cite{cx}
\eq{\label{rho}
	\rho(\tilde{x},\tilde{y})=\big((\tilde x_1-\tilde y_1)^2+(\tilde x_2-\tilde y_2)^2\big)^\frac12
}
The fundamental solution of the PML equation \eqref{pmleq} is formulated as (see \cite{cx,ls01}):
\begin{align}\label{fs2}
	\tilde{G}(x,y)=J(y)G(\tilde{x},\tilde{y})=\frac{\i}{4}J(y)H_0^{(1)}(k\rho(\tilde{x},\tilde{y})).
\end{align}

It is proved that $\tilde u$ decays exponentially as $|x|\to\infty$ \cite{cl03,bp,cx,ls01}.	Therefore the PML can be truncated where $\tilde u$ small enough. Let $B_L=(-l_1-d_1,l_1+d_1)\times(-l_2-d_2,l_2+d_2)$ where $l_1+d_1>\bar{l}_1$ and $l_2+d_2>\bar{l}_2$. Then we have the following truncated PML problem obtained by truncate the PML at $\pa B_L$:
Find $\hat u\in H_0^1(B_L)$ such that
\begin{align}
	(A\na \hat{u},\na v) - k^2 (J\hat{u},v) = - \pd{Jf,v}\ \forall v\in H_0^1(B_L). \label{eq_tpml}
\end{align}
Denote by $d=\min(d_1,d_2)$. The following lemma says that the the sesquilinear form associated with the truncated PML problem satisfies the inf--sup condition and that the truncated PML solution is exponentially close to the PML solution. We refer to \cite[Lemmas 3.3, 3.4, and 3.6]{cx} for the proof.

\begin{lemma}\label{Linfsup}
	For sufficiently large $\si_0 d\ge 1$, there exists a constant $\al\ge 0$ such that the truncated PML problem \eqref{eq_tpml} attains a unique solution and the following inf-sup condition holds.
	\begin{align}\label{tinfsup}
		\sup_{\psi\in H_0^1(B_L)} \frac{\abs{(A\na\phi,\na\psi)-k^2(J\phi,\psi)}}{\norme{\psi}_{B_L}}\geq \mu \norme{\phi}_{B_L}\quad \forall \phi\in H_0^1(B_L),
	\end{align}
	where $\mu^{-1}\leq C(\sigma_0) k^{1+\al}$ and $C(\sigma_0)$ is a constant that may polynomially depend on $\sigma_0$.
\end{lemma}

{\it Remark 2.1.} (i)
\cite[Lemma 3.4]{cx} shows that the above lemma holds with $\al=\frac12$. But since the inf-sup constant for the original scattering problem is of order $O(k^{-1})$ \cite{cm08}, we expect the lemma holds with $\al=0$. Actually this has been proved for the circular PML \cite{lw19} although it has not been proved for the uniaxial PML setting of this paper yet.

(ii) The inf--sup condition holds also for the PML equation in $\R^2$ \cite[(3.10)]{cx}:
\begin{align}\label{tinfsup2}
	\sup_{\psi\in H^1(\R^2)} \frac{(A\na\phi,\na\psi)-k^2(J\phi,\psi)}{\norme{\psi}_{\R^2}}\gtrsim \mu \norme{\phi}_{\R^2} \quad \forall \phi\in H^1(\R^2).
\end{align}

Finally, we recall estimates of the Green function, see \cite[(2.5) and Lemma~2.5]{cx}.
\begin{lemma}\label{lemma_G}
	For any $x,y\in\R^2, x\neq y$, there hold
	\begin{align*}
		\abs{G(\tilde{x},\tilde{y})}  & \lesssim e^{-\frac12 k\im\rho(\tilde{x},\tilde{y})}(k\abs{x-y})^{-\frac12},    \\
		\abs{\na_xG(\tilde{x},\tilde{y})} & \lesssim ke^{-\frac12 k\im\rho(\tilde{x},\tilde{y})}\big((k\abs{x-y})^{-\frac12}+(k\abs{x-y})^{-1}\big), \\
		\abs{\na_x\na_yG(\tilde{x},\tilde{y})} & \lesssim k^2e^{-\frac12 k\im\rho(\tilde{x},\tilde{y})}\big((k\abs{x-y})^{-\frac12}+(k\abs{x-y})^{-2}\big).
	\end{align*}
	Moreover,
	\begin{align}
		\im\rho(\tilde{x},\tilde{y})\ge\frac{1}{\abs{x-y}}\bigg(\abs{x_1-y_1}\bigg|\int_{y_1}^{x_1}\si_1(t) dt\bigg|+\abs{x_2-y_2}\bigg|\int_{y_2}^{x_2}\si_2(t)dt\bigg|\bigg). \label{eq_irxyi}
	\end{align}
\end{lemma}

\subsection{The PSTDDM for the PML problem in $\R^2$}\label{subs1}
In this subsection, we introduce PSTDDM for the PML equation in the whole space and give the fundamental theorems.

We recall that the domain $\set{x=(x_1,x_2):\abs{x_1}\leq l_1}$ is divided into $N$ layers \eqref{eq_Nlayers} and
that $f_i(x)=f(x)|_{\Omega_i}$ for any $x\in\Omega_i$ and $f_i(x)=0$ for any $x\in\R^2\backslash\bar{\Omega}_i$. We define smooth functions ${\be_i^+}(x_1)$ and ${\be_i^-}(x_1)$ by
\begin{align}
	 & {\be_i^+}=1,\ {\be_i^-}=0,\ {\be_i^+}'={\be_i^-}'=0\ \mathrm{as}\ x_1\leq\ze_i, \label{beta1} \\
	 & {\be_i^+}=0,\ {\be_i^-}=1,\ {\be_i^+}'={\be_i^-}'=0\ \mathrm{as}\ x_1\geq\ze_{i+1},\notag \\
	 & \abs{{\be_i^+}'}\leq C(\De\ze)^{-1},\ \abs{{\be_i^-}'}\leq C(\De\ze)^{-1},\notag
\end{align}
where C is a constant independent of $\ze_i$, $\ze_{i+1}$ and the subscript $i$. The PSTDDM consists of two source transfer steps described in Algorithm \ref{alg1} below. The two steps are independent of each other and can be computed in parallel. We recall that the construction of the original STDDM \cite{cx} is a step of source transfer followed by another step of ``wave expansion". Those two steps are executed in a sequential manner.

\begin{algorithm}
	\caption{Source Transfers for PML problem in $\R^2$}
	\label{alg1}
	\begin{multicols}{2}

		\textbf{Step 1.}

		\hrulefill

		1. Let $\bar{f}_{1}^+=f_1$;

		2. While $i=1,\cdots,N-2$ do

		\qquad$\bullet$ Find $u_i^+\in H^1(\R^2)$ such that
		\begin{align}\label{eqs1a}
			J^{-1}\na\cdot & (A\na u_i^+)+k^2u_i^+     \\
			  & =-\bar{f}_{i}^+-f_{i+1}\quad \mathrm{in}\ \R^2.\notag
		\end{align}

		\qquad$\bullet$ Compute
		\begin{align*}
			\Psi_{i+1}^+(\bar{f}_{i}^+)= & J^{-1}\na\cdot (A\na ({\be_{i+1}^+}u_i^+)) \\
			    & +k^2({\be_{i+1}^+}u_i^+).\notag
		\end{align*}

		\qquad$\bullet$ Set
		\begin{align*}
			\bar{f}_{i+1}^+=\begin{cases}
				f_{i+1}+\Psi_{i+1}^+(\bar{f}_{i}^+) \text{ in } \Omega_{i+1}, \\
				0 \text{ elsewhere. }
			\end{cases}
		\end{align*}
		\quad End while;

		3. For $i=N-1$, find $u_{N-1}^+\in H^1(\R^2)$ such that
		\begin{align}\label{eqs1b}
			J^{-1}\na\cdot & (A\na u_{N-1}^+)+k^2u_{N-1}^+    \\
			  & =-\bar{f}_{N-1}^+-f_{N}\; \mathrm{in}\ \R^2. \notag
		\end{align}

		\columnbreak

		\textbf{Step 2.}

		\hrulefill

		1. Let $\bar{f}_{N}^-=f_N$;

		2. While $i=N-1,\cdots,2$,

		\qquad$\bullet$ Find $u_i^-\in H^1(\R^2)$ such that
		\begin{align}\label{eqs2a}
			J^{-1}\na\cdot & (A\na u_i^-)+k^2u_i^-    \\
			  & =-\bar{f}_{i+1}^-\quad \mathrm{in}\ \R^2.\notag
		\end{align}
		\qquad$\bullet$ Compute
		\begin{align*}\Psi_{i}^-(\bar{f}_{i+1}^-)= & J^{-1}\na\cdot (A\na (\be_{i}^-u_i^-)) \\
			    & +k^2(\be_{i}^-u_i^-).
		\end{align*}
		\qquad$\bullet$ Set
		\begin{align*}
			\bar{f}_{i}^-=\begin{cases}
				f_{i}+\Psi_{i}^-(\bar{f}_{i+1}^-) \text{ in } \Omega_{i}, \\
				0 \text{ elsewhere. }
			\end{cases}
		\end{align*}
		\quad End while;

		3. For i=1, find $u_1^-\in H^1(\R^2)$ such that
		\begin{align}\label{eqs2b}
			J^{-1}\na\cdot & (A\na u_1^-)+k^2 u_1^-    \\
			  & =-f_1-\bar{f}_{2}^-\; \mathrm{in}\ \R^2.\notag
		\end{align}

	\end{multicols}
\end{algorithm}

By \eqref{eqs1a}, \eqref{eqs2a} and \eqref{eqs2b}, we know that $u_i^+$ is given by
\begin{align}
	 & u_i^+(x)=\int_{\Omega_i\cup\Omega_{i+1}} (\bar{f}_{i}^++f_{i+1})J(y)G(\tilde{x},\tilde{y})dy\ \forall x\in\R^2,\ i=1,\cdots,N-1,\label{u1i}
\end{align}
and $u_i^-$ is given by
\begin{align}
	 & u_i^-(x)=\int_{\Omega_{i+1}}\bar{f}_{i+1}^-(y)J(y)G(\tilde{x},\tilde{y})dy\ \forall x\in\R^2,\ i=N-1,\cdots,2, \\
	 & u_1^-(x)=\int_{\Omega_1\cup\Omega_2}(\bar{f}_{2}^-(y)+f_1(y))J(y)G(\tilde{x},\tilde{y})dy.
\end{align}

By simple calculation, we have the equivalent form of the source transfer operator $\Psi_{i+1}^+$:
\begin{align}
	\Psi_{i+1}^+(\bar{f}_{i}^+)=J^{-1}\na\cdot\left( A \na\be_{i+1}^+ u_i^+ \right) + J^{-1} {\na\be_{i+1}^+}\cdot (A\na u_i^+)-{\be_{i+1}^+}f_{i+1}\label{psi1}.
\end{align}
and it's easily obtained that $\Psi_{i+1}^+(\bar{f}_{i}^+)+{\be_{i+1}^+}f_{i+1}$ is in $L^2(\Omega_{i+1})$ and supported in $\Omega_{i+1}$.
Similarly, we can get the equivalent form for $\Psi_{i}^-$:
\begin{align}
	\Psi_{i}^-(\bar{f}_{i+1}^-)=J^{-1}\na\cdot\left( A{\na\be_{i}^-} u_i^- \right) + J^{-1} {\na\be_{i}^-}\cdot (A \na u_i^-)\label{psi2}.
\end{align}

The next two theorems verify the source transfer identities and give the integral representations of $u_i^\pm$. Let $\Ga_i:=\{x\in\R^2:\;x_1=\ze_i\}$. Given $a<b$, denote by $\Omega(a,b):=\set{x\in\R^2:\; a<x_1<b}$.

\begin{theorem}\label{th1}
	The following assertions hold:
	\begin{enumerate}
		\item[{\rm (i)}] For $i=1,\cdots,N-2$, we have, for any $x\in\Omega(\ze_{i+2}, +\infty)$,
	\end{enumerate}
	\begin{align}\label{eqt1a}
		\int_{\Omega_i} \bar{f}_{i}^+(y)\tilde{G}(x,y)dy=\int_{\Omega_{i+1}} \Psi_{i+1}^+(\bar{f}_{i}^+)(y)\tilde{G}(x,y)dy.
	\end{align}
	\begin{enumerate}
		\item[{\rm (ii)}] For the solution $u_i^+$ in \eqref{eqs1a}, we have, for any $x\in\Omega_{i+1}$, $i=1,\cdots,N-1$,
	\end{enumerate}
	\begin{align}\label{u1ia}
		u_i^+(x)=\int_{\Omega(-\infty,\ze_{i+2})} f(y)G(\tilde{x},\tilde{y})dy.
	\end{align}
\end{theorem}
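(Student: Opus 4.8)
The plan is to obtain part (i) directly from the defining identity \eqref{eqs1aa} for the source transfer operator, using the volume--potential representation and uniqueness of the PML equation, and then to derive part (ii) from (i) by telescoping along the recursion \eqref{eqs1ab}.

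For (i), set $w:={\be_{i+1}^+}u_i^+$. By \eqref{eqs1aa}, $w\in H^1(\R^2)$ solves $J^{-1}\na\cdot(A\na w)+k^2w=\Psi_{i+1}^+(\bar f_i^+)$ in $\R^2$. Since $u_i^+$ and $\De u_i^+$ decay exponentially at infinity by Lemma~\ref{lem1} (hence so does $\na u_i^+$ by interior estimates), a direct computation with \eqref{eqs1aa} shows that $\Psi_{i+1}^+(\bar f_i^+)$ is an element of $H^1(\R^2)'$ that decays exponentially in the $x_1$--direction; consequently, arguing as in Lemma~2.6 of \cite{cx}, the volume potential $x\mapsto-\int_{\R^2}\Psi_{i+1}^+(\bar f_i^+)(y)\tilde G(x,y)\,dy$ is an exponentially decaying $H^1(\R^2)$ solution of the same PML equation, and since \eqref{infsup} gives uniqueness of the $H^1(\R^2)$ solution it must equal $w$. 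Next, observe the structural fact that $\Psi_{i+1}^+(\bar f_i^+)+\bar f_i^+$ is supported in $\bar{\Om}_{i+1}$: indeed ${\be_{i+1}^+}\equiv1$ on $\{y_2<\ze_{i+1}\}$, so there $w=u_i^+$ and $\Psi_{i+1}^+(\bar f_i^+)=J^{-1}\na\cdot(A\na u_i^+)+k^2u_i^+=-\bar f_i^+-f_{i+1}=-\bar f_i^+$ (as $\operatorname{supp}f_{i+1}\subset\bar{\Om}_{i+1}$), while ${\be_{i+1}^+}\equiv0$ on $\{y_2>\ze_{i+2}\}$, so there $w\equiv0$ and $\Psi_{i+1}^+(\bar f_i^+)\equiv0\equiv\bar f_i^+$. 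Finally, evaluate the potential identity at $x\in\Om(\ze_{i+2},+\infty)$, where ${\be_{i+1}^+}(x)=0$ so $w(x)=0$: then $\int_{\R^2}\Psi_{i+1}^+(\bar f_i^+)(y)\tilde G(x,y)\,dy=0$, and writing $\Psi_{i+1}^+(\bar f_i^+)=\bigl(\Psi_{i+1}^+(\bar f_i^+)+\bar f_i^+\bigr)-\bar f_i^+$ and using the structural fact together with $\operatorname{supp}\bar f_i^+\subset\bar{\Om}_i$ gives \eqref{eqt1a}.

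For (ii), use \eqref{eqs1ab} to write $\Psi_{i+1}^+(\bar f_i^+)=\bar f_{i+1}^+-f_{i+1}$ on $\Om_{i+1}$, so that \eqref{eqt1a} becomes, for $x\in\Om(\ze_{i+2},+\infty)$,
\[
\int_{\Om_{i+1}}\bar f_{i+1}^+(y)\tilde G(x,y)\,dy=\int_{\Om_i}\bar f_i^+(y)\tilde G(x,y)\,dy+\int_{\Om_{i+1}}f_{i+1}(y)\tilde G(x,y)\,dy .
\]
Now fix $i\in\{1,\dots,N-1\}$ and $x\in\Om_{i+1}$. Since $x_2>\ze_{i+1}\ge\ze_{m+2}$ for every $m=1,\dots,i-1$, the identity above may be applied successively with index $i-1,i-2,\dots,1$; telescoping and using $\bar f_1^+=f_1$ yields $\int_{\Om_i}\bar f_i^+(y)\tilde G(x,y)\,dy=\sum_{j=1}^{i}\int_{\Om_j}f_j(y)\tilde G(x,y)\,dy$. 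Substituting this into the representation \eqref{u1i} of $u_i^+$,
\begin{align*}
u_i^+(x)&=\int_{\Om_i}\bar f_i^+(y)\tilde G(x,y)\,dy+\int_{\Om_{i+1}}f_{i+1}(y)\tilde G(x,y)\,dy\\
&=\sum_{j=1}^{i+1}\int_{\Om_j}f_j(y)\tilde G(x,y)\,dy=\int_{\Om(-\infty,\ze_{i+2})}f(y)\tilde G(x,y)\,dy ,
\end{align*}
where the last equality uses $\operatorname{supp}f\subset\cup_{j=1}^N\Om_j$ and $f\equiv0$ on $\Om_0$. Since moreover $\operatorname{supp}f\subset B_l$, where $\si_1=\si_2=0$ and hence $J\equiv1$ and $\tilde G(x,y)=G(\tilde x,\tilde y)$, this is \eqref{u1ia}.

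The delicate point is the identification of $w$ with its volume potential: one has to check that $\Psi_{i+1}^+(\bar f_i^+)$ --- a distribution rather than a smooth function, and supported on an exponentially thin but unbounded strip rather than on a compact set --- nonetheless generates a decaying $H^1(\R^2)$ solution of the PML equation, so that the wave-number--explicit uniqueness coming from \eqref{infsup} applies; this is precisely where the exponential-decay estimates of Lemmas~\ref{lem1}--\ref{lem2} and the potential-theoretic arguments of \cite{cx} are used. One must also verify that no spurious boundary contributions appear on the interfaces $\{y_2=\ze_{i+1}\}$ and $\{y_2=\ze_{i+2}\}$ when localizing the identities, which is ensured by the smoothness of ${\be_{i+1}^+}$ and the flatness conditions in \eqref{beta1}. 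Once this is in place, part (i) is immediate and part (ii) is pure bookkeeping.
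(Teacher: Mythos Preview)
Your proof is correct, but your argument for part~(i) follows a genuinely different route from the paper's. The paper proves \eqref{eqt1a} by two direct integrations by parts: first it rewrites $\int_{\Om_i}\bar f_i^+\tilde G\,dy$ as an integral of the PML operator applied to $u_i^+$ over the half-space $\{y_2<\ze_{i+1}\}$, integrates by parts (using the exponential decay of $u_i^+$ and of $\tilde G(x,\cdot)$, and the adjoint equation satisfied by $J^{-1}\tilde G(x,\cdot)$) to reduce to a boundary integral on $\Ga_{i+1}=\{y_2=\ze_{i+1}\}$; then, using the flatness conditions on $\be_{i+1}^+$ in \eqref{beta1}, it integrates by parts back over $\Om_{i+1}$ to recover the right-hand side of \eqref{eqt1a}. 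Your route instead identifies $w=\be_{i+1}^+u_i^+$ with its global volume potential via the $H^1(\R^2)$ uniqueness furnished by \eqref{infsup}, and then reads off the identity from the support structure of the globally defined $\Psi_{i+1}^+(\bar f_i^+)$.

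Your approach is conceptually clean and highlights the role of uniqueness; the paper's is more computational but also more self-contained, since the ``delicate point'' you flag --- that the volume potential of the non-compactly-supported $H^{-1}$ source $\Psi_{i+1}^+(\bar f_i^+)$ is itself an $H^1(\R^2)$ solution --- when unwound, amounts to essentially the same integration-by-parts computation, only performed once over all of $\R^2$ rather than split across $\{y_2<\ze_{i+1}\}$ and $\Om_{i+1}$. For part~(ii), your telescoping argument via \eqref{eqs1ab} and \eqref{u1i} is the same as the paper's.
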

\begin{proof}
	We first prove \eqref{eqt1a}. From \eqref{fs2} we have (see also \cite[(2.11)-(2.13)]{cx}, \cite[Theorem 2.8]{bp}, and \cite[Theorem 4.1]{ls01})
	$$
		\na_y\cdot(A\na_y(J^{-1}\tilde{G}(x,y)))+k^2J(J^{-1}\tilde{G}(x,y))=0,\; \forall x\in\Omega(\ze_{i+2},+\infty), y\in \Omega(\ze_1,\ze_{i+2}).
	$$
	For $x\in\Omega(\ze_{i+2},+\infty)$, $y\in\Omega_j$, $j=1,\cdots,i+1$, $\tilde{G}(x,y)$ decays exponentially as $\abs{y}\to \infty$ (cf. \cite[Lemma 2.5]{cx}). We know that $u_i^+(y)$ also decays exponentially as $\abs{y}\to\infty$ (cf. \cite[Lemma 2.6]{cx}). Therefore by integrating by parts we have
	\begin{align*}
		 & \quad\int_{\Omega_i} \bar{f}_{i}^+(y) \tilde{G}(x,y)dy          \\
		 & =-\int_{\Omega(-\infty,\ze_{i+1})} J^{-1}\big[\na_y\cdot (A\na_y u_i^+(y))+k^2Ju_i^+(y)\big] \tilde{G}(x,y)dy   \\
		 & =-\int_{\Ga_{i+1}} \left[(A\na_y u_i^+(y)\cdot e_1)J^{-1}\tilde{G}(x,y)-(A\na_y(J^{-1}\tilde{G}(x,y))\cdot e_1)u_i^+(y)\right]ds(y),
	\end{align*}
	where $e_1$ is the unit vector in the positive $x_1$-axis. By using \eqref{beta1} and integration by parts again, we have
	\begin{align*}
		\quad\int_{\Omega_i} \bar{f}_{i}^+(y) \tilde{G}(x,y)dy & = \int_{\pa \Omega_{i+1}} \big[(A\na_y ({\be_{i+1}^+}u_i^+(y))\cdot n)J^{-1}\tilde{G}(x,y) -   \\ &\qquad(A\na_y(J^{-1}\tilde{G}(x,y))\cdot n){\be_{i+1}^+}u_i^+(y)\big]ds(y)\\
		       & =\int_{\Omega_{i+1}} J^{-1}[\na_y\cdot (A\na_y ({\be_{i+1}^+}u_i^+(y)))+k^2J{\be_{i+1}^+}u_i^+(y)] \tilde{G}(x,y)dy \\
		       & =\int_{\Omega_{i+1}} \Psi_{i+1}^+(\bar{f}_{i}^+)(y)\tilde{G}(x,y)dy,
	\end{align*}
	where $n$ is the unit outer normal to $\pa \Omega_{i+1}$. That is, \eqref{eqt1a} holds.

	Since $\tilde{y}(y)=y$ and $J(y)=1$ for any $y\in B_l$, By using \eqref{u1i} and \eqref{eqt1a} we could prove \eqref{u1ia} {as follows}. For any $x\in\Omega_{i+1}$
	\begin{align*}
		u_i^+(x) & =\int_{\Omega_i\cup\Omega_{i+1}} (\bar{f}_{i}^++f_{i+1})J(y)G(\tilde{x},\tilde{y})dy      \\
		  & =\int_{\Omega_{i+1}} f_{i+1}(y) J(y) G(\tilde{x},\tilde{y})dy + \int_{\Omega_{i}} f_{i}(y) J(y) G(\tilde{x},\tilde{y})dy  \\
		  & \quad+\int_{\Omega_i} \Psi_{i}^+(\bar{f}_{i-1}^+)(y)J(y)G(\tilde{x},\tilde{y})dy       \\
		  & =\int_{\Omega_{i}\cup\Omega_{i+1}} f(y)G(\tilde{x},\tilde{y})dy+\int_{\Omega_{i-1}} \bar{f}_{i-1}^+(y)J(y)G(\tilde{x},\tilde{y})dy \\
		  & =\cdots=\int_{\cup_{j=1}^{i+1}\Omega_j}f(y)G(\tilde{x},\tilde{y})dy.
	\end{align*}
	This completes the proof of the theorem.
\end{proof}

The second step of Algorithm \ref{alg1} is similar to the first one. So by similar argument, we can obtain the following theorem whose proof is omitted.
\begin{theorem}\label{th2}
	The following assertions hold:
	\begin{enumerate}
		\item[{\rm (i)}] For $i=N-1,\cdots,2$, we have, for any $x\in\Omega(-\infty, \ze_{i})$,
	\end{enumerate}
	\begin{align}\label{eqt2a}
		\int_{\Omega_{i+1}} \bar{f}_{i+1}^-(y)\tilde{G}(x,y)dy=\int_{\Omega_{i}} \Psi_{i}^-(\bar{f}_{i+1}^-)(y)\tilde{G}(x,y)dy.
	\end{align}
	\begin{enumerate}
		\item[{\rm (ii)}] For the solution $u_i^-$, $i=N-1,\cdots,2$, in \eqref{eqs2a}, we have, for any $x\in\Omega_{i}$,
	\end{enumerate}
	\begin{align}\label{u2i}
		u_i^-(x)=\int_{\Omega(\ze_{i+1},+\infty)} f(y)G(\tilde{x},\tilde{y})dy.
	\end{align}
	\begin{enumerate}
		\item[{\rm (iii)}] For the solution $u_1^-(x)$ in \eqref{eqs2b}, we have, for any $x\in\Omega_1$,
	\end{enumerate}
	\begin{align}\label{u2ia}
		u_1^-(x)=\int_{\R^2} f(y)G(\tilde{x},\tilde{y})dy.
	\end{align}
\end{theorem}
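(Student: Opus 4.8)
The plan is to mirror the proof of Theorem~\ref{th1} verbatim, exploiting the fact that Algorithm~\ref{alg2} is the "left-going" analogue of Algorithm~\ref{alg1}. For part~(i), I would fix $i\in\{N,\dots,3\}$ and $x\in\Om(-\infty,\ze_{i-1})$. The starting point is the reciprocity/homogeneity property of $\tilde G$: for $y\in\Om(\ze_{i-1},+\infty)$ one has $\na_y\cdot(A\na_y(J^{-1}\tilde G(x,y)))+k^2J(J^{-1}\tilde G(x,y))=0$, which holds because $x$ and $y$ lie on opposite sides of the interface $\ze_{i-1}$. Using \eqref{eqs2a}, namely $J^{-1}\na\cdot(A\na u_i^-)+k^2u_i^-=-\bar f_i^-$, together with Lemma~\ref{lem2} (exponential decay of $u_i^-$) and the exponential decay of $\tilde G(x,\cdot)$ from \cite{cx}, I would integrate by parts twice over $\Om(\ze_{i-1},+\infty)$. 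The first integration by parts collapses the volume integral of $\bar f_i^-\tilde G$ to a boundary term on $\Ga_{i-1}$; then, inserting the cutoff $\be_{i-1}^-$ (which by \eqref{beta1} equals $1$ for $x_2\le\ze_{i-1}$, vanishes for $x_2\ge\ze_i$, and has vanishing derivative at both ends), I integrate by parts back over $\Om_{i-1}$ to recover $\int_{\Om_{i-1}} J^{-1}[\na_y\cdot(A\na_y(\be_{i-1}^-u_i^-))+k^2J\be_{i-1}^-u_i^-]\tilde G\,dy$, which is exactly $\int_{\Om_{i-1}}\Psi_{i-1}^-(\bar f_i^-)(y)\tilde G(x,y)\,dy$ by the definition in Algorithm~\ref{alg2}. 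The sign bookkeeping differs from Theorem~\ref{th1} only because the outward normal on $\Ga_{i-1}$ now points in the $-e_2$ direction, but this cancels consistently.

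For part~(ii), I would argue inductively on the recursion $\bar f_i^-=f_i+\Psi_i^-(\bar f_{i+1}^-)$, exactly as in the final display of the proof of Theorem~\ref{th1}. Starting from the representation $u_i^-(x)=\int_{\Om_i}\bar f_i^-(y)J(y)G(\tilde x,\tilde y)\,dy$ for $x\in\R^2$, and using that $\tilde y(y)=y$, $J(y)=1$ on $B_l$ together with part~(i) to move the transferred source $\Psi_i^-(\bar f_{i+1}^-)$ from $\Om_i$ out to $\Om_{i+1}$ (valid since $x\in\Om_{i-1}\subset\Om(-\infty,\ze_i)$), I peel off one layer at a time: $\int_{\Om_i}\bar f_i^-\tilde G=\int_{\Om_i}f_i\tilde G+\int_{\Om_{i+1}}\Psi_{i+1}^-(\bar f_{i+2}^-)\tilde G=\int_{\Om_i\cup\Om_{i+1}}f\,G(\tilde x,\cdot)+\int_{\Om_{i+1}}\bar f_{i+1}^-\,J\,G(\tilde x,\cdot)-\int_{\Om_{i+1}}f_{i+1}\,G(\tilde x,\cdot)$, and iterating $N-i$ times yields $u_i^-(x)=\int_{\cup_{j=i}^{N}\Om_j}f(y)G(\tilde x,\tilde y)\,dy=\int_{\Om(\ze_i,+\infty)}f(y)G(\tilde x,\tilde y)\,dy$, since $\supp f\subset\cup_{j=1}^N\Om_j$ and the layers above $\Om_N$ carry no source.

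For part~(iii), the only change is the base case of Algorithm~\ref{alg2}: equation \eqref{eqs2b} uses the combined source $-f_1-\bar f_2^-$ rather than $-\bar f_2^-$, so $u_2^-(x)=\int_{\Om_1\cup\Om_2}(\bar f_2^-+f_1)(y)J(y)G(\tilde x,\tilde y)\,dy$. Running the same peeling argument—now for $x\in\Om_1\subset\Om(-\infty,\ze_2)$, which licenses moving $\Psi_2^-(\bar f_3^-)$ out to $\Om_3$—collapses everything down to $\int_{\cup_{j=1}^N\Om_j}f(y)G(\tilde x,\tilde y)\,dy=\int_{\R^2}f(y)G(\tilde x,\tilde y)\,dy$, again using $\supp f\subset B_l\subset\cup_{j=1}^N\Om_j$.

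I expect the only real subtlety—and the point worth stating carefully rather than dismissing as routine—is the justification of the two integrations by parts over the unbounded strips $\Om(\ze_{i-1},+\infty)$ and $\Om_{i-1}$: one must check that there are no contributions from infinity in the $\pm e_1$ directions and from $x_2\to+\infty$. This is where Lemma~\ref{lem2} (the pointwise bound $|u_i^-(x)|+|\De u_i^-(x)|\le Ce^{-\frac18 k\ga_0|x|}\|f\|_{H^1(B_l)'}$ for $|x|\ge M_i$) combines with the exponential decay of $\tilde G(x,\cdot)$ away from $x$ to kill all far-field boundary terms; since $x$ is fixed in $\Om(-\infty,\ze_{i-1})$ while the integration is over $y$ with $y_2>\ze_{i-1}>x_2$, the complex distance $\rho(\tilde x,\tilde y)$ has strictly growing real part, giving the needed decay of $H_0^{(1)}(k\rho(\tilde x,\tilde y))$. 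Everything else is a sign-tracking exercise identical to Theorem~\ref{th1}, which is why I would simply write "by an argument entirely analogous to the proof of Theorem~\ref{th1}" and then spell out only the changes above.
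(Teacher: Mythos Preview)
Your approach is exactly what the paper does---it simply says ``by argument similar to the proof above''---and your identification of the one genuine subtlety (killing the far-field boundary terms via Lemma~\ref{lem2} and the decay of $\tilde G$) is correct. One index slip to fix: in part~(i) the first integration by parts should be over $\Om(\ze_i,+\infty)$, giving a boundary term on $\Ga_i$ (where $\be_{i-1}^-=1$ and $(\be_{i-1}^-)'=0$, so the cutoff can be inserted), not on $\Ga_{i-1}$ (where $\be_{i-1}^-=0$, which would make the insertion fail); the second integration by parts over $\Om_{i-1}$ then works because $\be_{i-1}^-$ vanishes on $\Ga_{i-1}$.
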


Combining Theorem \ref{th1} and Theorem \ref{th2}, we could obtain the main result in this section.
\begin{theorem}\label{th3}
	Define $u_0^+\equiv0$ and $u_{N}^-\equiv0$. Then we have
	\begin{align*}
		\tilde{u}(x)=-(u_{i-1}^+(x)+u_{i}^-(x))\quad\text{for any } x\in\Omega_i, i=1,\cdots,N.
	\end{align*}
\end{theorem}
\begin{proof}
	From \eqref{u2ia}, it is clear that the above identity holds for $i=1$.
	For any $x\in\Omega_i$, $i=2,\cdots,N$, it follows from \eqref{u1ia}, \eqref{u2i}, and \eqref{tu} that,
	\begin{align*}
		\tilde{u}(x) & =-\int_{\R^2}f(y) G(\tilde{x},\tilde{y})dy            \\
		  & =-\left(\int_{\Omega(-\infty,\ze_{i+1})}f(y) G(\tilde{x},\tilde{y})dy+\int_{\Omega(\ze_{i+1},+\infty)}f(y)G(\tilde{x},\tilde{y})dy\right) \\
		  & =-(u_{i-1}^+(x)+u_{i}^-(x)),
	\end{align*}
	where we have used $\tilde{y}(y)=y$ in $B_l$. This completes the proof of the theorem.
\end{proof}

\subsection{The PSTDDM for the truncated PML problem} \label{sec3}
Note that the PML problems \eqref{eqs1a}--\eqref{eqs2b} in Algorithm~\ref{alg1} are defined in $\R^2$. In practice, they must be truncated into bounded domains.

We introduce local PML problems by using the PML complex coordinate stretching outside the domain $(\ze_i,\ze_{i+2})\times(-l_2,l_2)$. The PML stretching is $\tilde{x}^i(x)=(\tilde{x}_1^i(x_1),$ $\tilde{x}_2(x_2))^T$, which has been proposed in \cite{cx}, where
\begin{equation}\label{tx1}
	\tilde{x}_1^i(x_1)=x_1+\i\int_{\ze_{i+1}}^{x_1}\si_1^i(t) dt, \quad\si_1^i(t):=
	\begin{cases}
		\si_1(t+\ze_{N+1}-\ze_{i+2}) & \text{if } t>\ze_{i+1}, \\
		\si_1(t-\ze_i+\ze_1)  & \text{if } t\le\ze_{i+1}.
	\end{cases}
\end{equation}
We define
$$
	A_i(x)=\mathrm{diag}\left(\frac{\tilde{x}_2(x_2)'}{\tilde{x}_1^i(x_1)'},\frac{\tilde{x}_1^i(x_1)'}{\tilde{x}_2(x_2)'}\right),\quad J_i(x)=\tilde{x}_1^i(x_1)' \tilde{x}_2 (x_2)'.
$$
Denote by $\Omega_i^\pml=\set{x=(x_1,x_2)\in B_L:\ze_i-d_1\leq x_1\leq\ze_{i+2}+d_1}$.
The local PML problems in truncated domains can be defined for some wave source $F\in H^1(\Omega_i^\pml)'$ as: find $\phi\in H_0^1(\Omega_i^\pml)$ such that
\begin{align}\label{TPML}
	(A_i\na \phi,\na \psi) - k^2(J_i\phi,\psi)=-\pd{JF,\psi}\quad \forall \psi\in H_0^1(\Omega_i^\pml).
\end{align}
Then the PSTDDM for the PML equation in a truncated bounded domain is described in Algorithm~\ref{alg2} which is a natural modification of Algorithm~\ref{alg1}.

\begin{algorithm}
	\caption{Source Transfers for Truncated PML problem}
	\label{alg2}
	\begin{multicols}{2}

		\textbf{Step 1.}

		\hrulefill

		1. Let $\hat{f}_{1}^+=f_1$;

		2. While $i=1,\cdots,N-2$ do

		\qquad$\bullet$ Find $\hat{u}_i^+\in H_0^1(\Omega_i^{\pml})$,
		\begin{align}\label{eqs3a}
			J_i^{-1}\na\cdot & (A_i\na \hat{u}_i^+) + k^2\hat{u}_i^+ \\
			   & = - \hat{f}_{i}^+ - f_{i+1}.\notag
		\end{align}

		\qquad$\bullet$ Compute $\hat{\Psi}_{i+1}^+(\hat{f}_{i}^+)\in H^{-1}(\Omega_i^{\pml})$,
		\begin{align*} \hat{\Psi}_{i+1}^+(\hat{f}_{i}^+) = & J_i^{-1}\na\big(A_i\na ({\be_{i+1}^+}\hat{u}_i^+)\big) \\
			     & + k^2(\be_{i+1}^+\hat{u}_i^+).
		\end{align*}
		\qquad$\bullet$ Set
		\begin{align*}
			\hat{f}_{i+1}^+=\begin{cases}
				f_{i+1}+\hat{\Psi}_{i+1}^+(\hat{f}_{i}^+) \text{ in } \Omega_{i+1}\cap B_L, \\
				0 \text{ elsewhere. }
			\end{cases}
		\end{align*}
		\quad End while;

		3. For $i=N-1$, find $\hat{u}_{N-1}^+\in H_0^1(\Omega_{N-1}^{\pml})$,
		\begin{align}\label{eqs3b}
			J_{N-1}^{-1}\na\cdot & (A_{N-1} \hat{u}_{N-1}^+) + k^2\hat{u}_{N-1}^+ \\
			   & = - \hat{f}_{N-1}^+ - f_{N}.\notag
		\end{align}

		\columnbreak

		\textbf{Step 2.}

		\hrulefill

		1. Let $\hat{f}_{N}^- =f_N$;

		2. While $i=N-1,\cdots,2$,

		\qquad$\bullet$ Find $\hat{u}_i^-\in H_0^1(\Omega_i^{\pml})$,
		\begin{align}\label{eqs4a}
			J_i^{-1}\na\cdot & (A_i\na \hat{u}_i^-) + k^2\hat{u}_i^- \\
			   & = - \hat{f}_{i+1}^-.\notag
		\end{align}
		\qquad$\bullet$ Compute $\hat{\Psi}_i^-(\hat{f}_{i+1}^-)\in H^{-1}(\Omega_i^{\pml})$,
		\begin{align*}
			\hat{\Psi}_i^-(\hat{f}_{i+1}^-)= & J_i^{-1}\na(A_i\na ({\be_i^-}\hat{u}_i^-)) \\
			     & + k^2(\be_i^-\hat{u}_i^-).
		\end{align*}
		\qquad$\bullet$ Set
		\begin{align*}
			\hat{f}_i^-=\begin{cases}
				f_i+\Psi_i^-(\hat{f}_{i+1}^-) \text{ in } \Omega_i\cap B_L, \\
				0 \text{ elsewhere. }
			\end{cases}
		\end{align*}
		\quad End while;

		3. For i=1, find $\hat{u}_{1}^-\in H_0^1(\Omega_1^{\pml})$,
		\begin{align}\label{eqs4b}
			J_1^{-1}\na\cdot & (A_1\na \hat{u}_{1}^-) + k^2\hat{u}_{1}^- \\
			   & = - \hat{f}_{2}^- - f_1.\notag
		\end{align}

	\end{multicols}
\end{algorithm}

Similar to \eqref{tinfsup} (see also \cite[(3.16)]{cx}), the following $\inf$--$\sup$ condition holds for the truncated PML problem \eqref{TPML} if $\si_0d$ is sufficiently large.
\begin{align}\label{localinfsup}
	\sup_{\psi\in H_0^1(\Omega_i^\pml)} \frac{(A_i\na\phi,\na\psi)-k^2(J_i\phi,\psi)}{\norme{\psi}_{\Omega_i^\pml}}\geq \mu(k)\norme{\phi}_{\Omega_i^\pml}\quad \forall \phi\in H_0^1(\Omega_i^\pml),
\end{align}
where $\mu^{-1}(k)\leq C(\sigma_0) k^{1+\al}$ and $C(\sigma_0)$ is a constant that may polynomially depend on $\sigma_0$.

In order to estimate the PML truncation errors of Algorithm~\ref{alg2}, we introduce auxiliary functions $\bar{u}_{i}^\pm,\ i=1,\cdots,N-1$ defined as follows:
\begin{align}
	 & \bar{u}_{i}^+(x)=\int_{\Omega_i\cup\Omega_{i+1}}(\bar{f}_{i}^+(y)+f_{i+1}(y))J_i(y)G(\tilde{x}^i,\tilde{y}^i)dy,\quad i=1,\cdots,N-1,\label{eq_bup} \\
	 & \bar{u}_{i}^-(x)=\int_{\Omega_{i+1}}\bar{f}_{i+1}^-(y)J_{i}(y)G(\tilde{x}^i,\tilde{y}^i)dy,\quad i=N-1,\cdots,2,     \\
	 & \bar{u}_{1}^-(x)=\int_{\Omega_{1}\cup\Omega_2}(\bar{f}_{2}^-+f_1)J_{1}(y)G(\tilde{x}^1,\tilde{y}^1)dy. \label{eq_bum}
\end{align}
Clearly,
\begin{align}\label{barui}
	u_i^\pm=\bar{u}_{i}^\pm \quad\text{in}\quad \Omega_i\cup\Om_{i+1}.
\end{align}
\begin{lemma}\label{lem4}
	Let $\si_0d\geq1$ be sufficiently large, we have
	\begin{align*}
		\norm{\bar{u}_{i}^\pm}_{H^{\frac12}(\pa\Omega_i^\pml)}\lesssim (1+kL) e^{-\frac{1}{2}k\ga\bar{\si}}\norm{f}_{L^2(B_l)},\quad i=1,\cdots,N-1,
	\end{align*}
	where
	\begin{align}
		\ga=\min\big(\frac{d_1}{\sqrt{d_1^2+(2l_2+d_2)^2}},\frac{d_2}{\sqrt{d_2^2+(2l_1+d_1)^2}}\big),\ \bar\si=\min_{j=1,2}\int_{l_j}^{l_j+d_j}\si_j(t)\dt. \label{eq_gabsi}
	\end{align}
\end{lemma}
The proof of this lemma is omitted since it can be proved by following the proofs of \cite[Lemma~3.5--Lemma~3.6]{cx}.

The following lemma shows that $\hat f_i^\pm$ is a good approximation of $\bar f_i^\pm$ in Algorithm~\ref{alg1}.
\begin{lemma}\label{lem5}
	Let $\si_0d\geq1$ be sufficiently large. we have, for $i=2,\cdots,N-1$,
	\begin{align*}
		\norme{\bar{f}_{i}^+-\hat{f}_{i}^+}_{\Omega_i^\pml}^* & \lesssim (k\mu)^{-(i-1)} (1+kL)^2 e^{-\frac{1}{2}k\ga\bar{\si}} \norm{f}_{L^2(B_l)}, \\
		\norme{\bar{f}_{i}^--\hat{f}_{i}^-}_{\Omega_{i-1}^\pml}^* & \lesssim (k\mu)^{-(N-i)} (1+kL)^2 e^{-\frac{1}{2}k\ga\bar{\si}} \norm{f}_{L^2(B_l)}.
	\end{align*}
\end{lemma}
\begin{proof} We only prove (i) since the proof of (ii) is similar.
	From \eqref{u1i} and \eqref{eq_bup}, we have that $u_i^+(x)=\bar u_i^+(x)$ for $x\in\Omega_i\cap\Omega_{i+1}$, which implies
	\begin{align*}
		\Psi_{i+1}^+(\bar{f}_{i}^+)=J_i^{-1}\na\cdot (A_i\na ({\be_{i+1}^+}\bar u_i^+))+k^2({\be_{i+1}^+}\bar u_i^+).
	\end{align*}
	By simple calculation, we have for any $\psi\in H^1(\Omega_{i+1}^\pml)$,
	\begin{align*}
		\pd{J_i\Psi_{i+1}^+(\bar{f}_{i}^+),\psi} = -(A_i\na\be_{i+1}^+\bar u_i^+,\na\psi)_{\Omega_{i+1}} + (A_i\na\bar u_i^+, \na\be_{i+1}^+\psi)_{\Omega_{i+1}} -\pd{f_{i+1},\be_{i+1}^+\psi},
	\end{align*}
	and
	\begin{align*}
		\pd{J_i\hat\Psi_{i+1}^+(\hat{f}_{i}^+),\psi}=-(A_i\na\be_{i+1}^+\hat u_i^+,\na\psi)_{\Omega_{i+1}} + (A_i\na\hat u_i^+, \na\be_{i+1}^+\psi)_{\Omega_{i+1}} -\pd{f_{i+1},\be_{i+1}^+\psi}.
	\end{align*}
	Therefore, for $\psi\in H^1(\Omega_i^\pml)$
	\begin{align*}
		 & (J_{i-1}(\bar{f}_{i}^+-\hat{f}_{i}^+),\psi) = \big(J_{i-1}(\Psi_{i}^+(\bar{f}_{i-1}^+)-\hat{\Psi}_{i}^+(\hat{f}_{i-1}^+)), \psi\big)_{\Omega_i\cap B_L}     \\
		 & = -\big(A_{i-1}\na\be_i^+(\bar{u}_{i-1}^+-\hat{u}_{i-1}^+),\nabla \psi\big)_{\Omega_i\cap B_L} + \big(A_{i-1}\na(\bar{u}_{i-1}^+-\hat{u}_{i-1}^+),\na\be_i^+ \psi\big)_{\Omega_i\cap B_L} \\
		 & \lesssim k^{-1} \norme{\bar{u}_{i-1}^+-\hat{u}_{i-1}^+}_{\Omega_i\cap B_L} \norme{\psi}_{\Omega_i\cap B_L}          \\
		 & \lesssim k^{-1} \norme{\bar{u}_{i-1}^+-\hat{u}_{i-1}^+}_{\Omega_{i-1}^\pml} \norme{\psi}_{\Omega_i^\pml}.
	\end{align*}
	On the other hand, $\bar{u}_{i-1}^+-\hat{u}_{i-1}^+=\bar{u}_{i-1}^+$ on $\pa\Omega_{i-1}^\pml$ and for any $\psi\in H^1_0(\Omega_{i-1}^\pml)$
	\begin{align*}
		(A_{i-1}\na(\bar{u}_{i-1}^+-\hat{u}_{i-1}^+),\na\psi) - k^2(J_{i-1}(\bar{u}_{i-1}^+-\hat{u}_{i-1}^+),\psi) = \pd{J_{i-1}(\bar{f}_{i-1}^+-\hat{f}_{i-1}^+),\psi}.
	\end{align*}
	By using the inf--sup condition \eqref{localinfsup}, and Lemma \ref{lem4}, we have
	\begin{align*}
		 & \norme{\bar{u}_{i-1}^+-\hat{u}_{i-1}^+}_{\Omega_{i-1}^\pml}\lesssim \mu^{-1} \norme{\bar{f}_{i-1}^+-\hat{f}_{i-1}^+}_{\Omega_{i-1}^\pml}^*  \\
		 & \quad + \mu^{-1}(1+kL)\norm{\bar{u}_{i-1}}_{H^{\frac12}(\pa\Omega_{i-1}^\pml)}         \\
		 & \lesssim \mu^{-1} \norme{\bar{f}_{i-1}^+-\hat{f}_{i-1}^+}_{\Omega_{i-1}^\pml}^* + \mu^{-1} (1+kL)^2 e^{-\frac{1}{2}\ga\bar{\si}}\norm{f}_{L^2(B_l)}.
	\end{align*}
	Therefore,
	\begin{align*}
		\norme{\bar{f}_{i}^+-\hat{f}_{i}^+}_{\Omega_i^\pml}^* & \lesssim (k\mu)^{-1}\norme{\bar{f}_{i-1}^+-\hat{f}_{i-1}^+}_{\Omega_{i-1}^\pml}^* + (k\mu)^{-1} (1+kL)^2 e^{-\frac{1}{2}k\ga\bar{\si}}\norm{f}_{L^2(B_l)}.
	\end{align*}
	(i) follows from the induction argument and the fact that $\bar{f}_{1}^+-\hat{f}_{1}^+=0$.
	This completes the proof of the lemma.
\end{proof}

\begin{lemma}\label{lem6}
	Let $\si_0d\geq1$ be sufficiently large. For $i=1,\cdots,N-1$,
	\begin{align*}
		\norme{\bar{u}_{i}^+-\hat{u}_i^+}_{\Omega_i^\pml} & \lesssim k (k\mu)^{-i}(1+kL)^2 e^{-\frac{1}{2}k\ga\bar{\si}} \norm{f}_{L^2(B_l)}, \\
		\norme{\bar{u}_{i}^--\hat{u}_i^-}_{\Omega_{i}^\pml} & \lesssim k (k\mu)^{-(N-i)}(1+kL)^2 e^{-\frac{1}{2}k\ga\bar{\si}} \norm{f}_{L^2(B_l)}.
	\end{align*}
\end{lemma}
\begin{proof} Clearly,
	\begin{align*}
		(A_{i}\na(\bar{u}_{i}^+-\hat{u}_{i}^+),\na\psi) - k^2(J_{i}(\bar{u}_{i}^+-\hat{u}_{i}^+),\psi) = \pd{J_{i}(\bar{f}_{i}^+-\hat{f}_{i}^+),\psi}, \forall \psi\in H^1_0(\Omega_{i}^\pml).
	\end{align*}
	By using the fact that $\bar{u}_{i}^+-\hat{u}_{i}^+=\bar{u}_{i}^+$ on $\pa\Omega_{i}^\pml$, Lemmas~\ref{lem4}--\ref{lem5}, and the inf--sup condition \eqref{localinfsup}, the first result can be obtained. The second result can be proved similarly.
\end{proof}

\begin{theorem}\label{th4}
	We define $\hat{u}_{0}^+=\hat{u}_{N+1}^-=0$ in $\R^2$. Let $\hat{v}=-(\hat{u}_{i-1}^++\hat{u}_{i}^-)$ in $\Omega_i\cap B_L$ for all $i=1,2,\cdots,N$. Then for sufficiently large $\si_0d\geq1$, we have
	\begin{align}\label{eq4.1}
		\norme{\tilde{u}-\hat{v}}_{B_L}\lesssim k(k\mu)^{-(N-1)}(1+kL)^2 e^{-\frac{1}{2}k\ga\bar{\si}}\norm{f}_{L^2(B_l)},
	\end{align}
	where $\tilde u$ is the solution to the PML problem \eqref{eq_exactsol} in the whole space.
\end{theorem}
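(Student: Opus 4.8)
The plan is to split the error $\tilde u - \hat v$ in each layer $\Om_i\cap B_L$ into two independent pieces matching the decomposition in Theorem~\ref{th3}: by that theorem $\tilde u = -(u_{i-1}^+ + u_{i+1}^-)$ on $\Om_i$, and by definition $\hat v = -(\hat u_{i-1}^+ + \hat u_{i+1}^-)$, so
\begin{align*}
\tilde u - \hat v = -(u_{i-1}^+ - \hat u_{i-1}^+) - (u_{i+1}^- - \hat u_{i+1}^-)\quad\text{in }\Om_i\cap B_L.
\end{align*}
It therefore suffices to bound $\|u_{i}^+ - \hat u_i^+\|_{H^1(\Om_i\cap B_L)}$ and $\|u_i^- - \hat u_i^-\|_{H^1(\Om_{i-1}\cap B_L)}$ for all relevant $i$, and then sum over the $O(N)$ layers (absorbing the $N$ into the constant, since the $k^{\al(N-1)}$ factor dominates). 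The key observation, already noted in the proof of Theorem~\ref{lem5}, is that $u_i^+ = \bar u_i^+$ on $\Om_i\cup\Om_{i+1}$ and more generally the full-space transferred solution $u_i^+$ and the truncated-domain function $\bar u_i^+$ agree where it matters, so the triangle inequality gives $\|u_i^+-\hat u_i^+\|_{H^1(\Om_i\cap B_L)} \le \|u_i^+ - \bar u_i^+\|_{H^1(\Om_i^\pml)} + \|\bar u_i^+ - \hat u_i^+\|_{H^1(\Om_i^\pml)}$.

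The second term on the right is exactly controlled by Lemma~\ref{lem6}(i), which yields $\|\bar u_i^+ - \hat u_i^+\|_{H^1(\Om_i^\pml)} \le Ck^{\al i + 2}(1+kL)^2 e^{-\frac12 k\ga\bar\si}\|f\|_{H^1(B_l)'}$, and symmetrically Lemma~\ref{lem6}(ii) handles the $\bar u_i^- - \hat u_i^-$ term. For the first term, $u_i^+ - \bar u_i^+$ is the difference between the full-space PML representation and the one built with the locally-stretched coordinates $\tilde x_i$; since $f$, $\bar f_i^+$ and $f_{i+1}$ are all supported in $B_l$ where $\tilde y_i = \tilde y = y$ and $J_i = J = 1$, this difference is governed by how $G(\tilde x_i,\tilde y_i)$ differs from $G(\tilde x,\tilde y)$, i.e. by the decay of the PML fundamental solution away from the region where the two stretchings coincide — this is precisely the mechanism behind Lemma~\ref{lem4} and the exponential factor $e^{-\frac12 k\ga\bar\si}$ appears here as well, with at most the same polynomial-in-$k$ prefactors. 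Taking the maximum over $i=1,\dots,N-1$ (resp. $i=N,\dots,2$) replaces $k^{\al i}$ by $k^{\al(N-1)}$, and combining the two contributions gives the claimed bound with a $k^2$ (not $k^{2}$ times anything worse) prefactor after the worst-case index is inserted.

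Concretely, the steps I would carry out are: (1) invoke Theorem~\ref{th3} and the definition of $\hat v$ to reduce \eqref{eq4.1} to layerwise bounds on $u_{i\mp1}^\pm - \hat u_{i\mp1}^\pm$; (2) for each $i$, write $u_i^\pm - \hat u_i^\pm = (u_i^\pm - \bar u_i^\pm) + (\bar u_i^\pm - \hat u_i^\pm)$; (3) bound $\bar u_i^\pm - \hat u_i^\pm$ directly by Lemma~\ref{lem6}; (4) bound $u_i^\pm - \bar u_i^\pm$ using the support of the sources in $B_l$ together with the exponential decay estimates on the PML kernel (the same estimates underlying Lemmas~\ref{lem1}, \ref{lem2}, \ref{lem4}); (5) sum over layers, noting the number of layers is a fixed constant for the purposes of the estimate or, if $N$ grows, is dominated by the exponentially small factor after choosing $\si_0 d_2$ large enough to beat any polynomial-in-$N$ growth — in practice one absorbs $N$ into $C$ since $N$ is bounded in terms of $kL$ and the bound already carries $(1+kL)^2$. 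The main obstacle is step (4): making precise that $\|u_i^+ - \bar u_i^+\|_{H^1(\Om_i^\pml)}$ inherits the factor $e^{-\frac12 k\ga\bar\si}$ with only polynomial-in-$k$ loss. This requires the same reflection-argument / complex-distance estimates used to prove Lemma~\ref{lem4}, applied to compare $G(\tilde x,\tilde y)$ and $G(\tilde x_i,\tilde y_i)$ on the region $\Om_i^\pml$ against sources living in $B_l\subset (-l_1,l_1)\times(\ze_i,\ze_{i+2})$ where the two stretchings are identical, so that the discrepancy is an exponentially decaying tail. A cleaner alternative, which I expect the authors take, is to observe that $u_i^+ - \bar u_i^+$ itself satisfies a local PML problem on $\Om_i^\pml$ with zero interior source and boundary data equal to the trace of the full-space transferred solution there, whose $H^{1/2}$ norm is exponentially small by Lemma~\ref{lem4}; then the local inf--sup condition \eqref{localinfsup} converts this into an $H^1$ bound with the extra $k^{1+\al}$ factor, consistent with what the final constant in \eqref{eq4.1} reflects.
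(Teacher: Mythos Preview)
Your overall strategy matches the paper's: reduce via Theorem~\ref{th3} to layerwise bounds, then invoke Lemma~\ref{lem6}. But your step~(4) is superfluous, and the paper's proof avoids it entirely. You correctly record that $u_i^+=\bar u_i^+$ on $\Om_i\cup\Om_{i+1}$; the point to exploit is that the layer on which you actually need $u_{i-1}^+-\hat u_{i-1}^+$ is $\Om_i\cap B_L\subset\Om_{i-1}\cup\Om_i$, where $u_{i-1}^+-\bar u_{i-1}^+=0$ identically. Hence
\[
\norm{u_{i-1}^+-\hat u_{i-1}^+}_{H^1(\Om_i\cap B_L)}
=\norm{\bar u_{i-1}^+-\hat u_{i-1}^+}_{H^1(\Om_i\cap B_L)}
\le\norm{\bar u_{i-1}^+-\hat u_{i-1}^+}_{H^1(\Om_{i-1}^\pml)},
\]
and Lemma~\ref{lem6}(i) finishes; the same holds for the minus sweep via Lemma~\ref{lem6}(ii). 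This is exactly the paper's two-line argument: observe $u_i^\pm=\bar u_i^\pm$ on the relevant layer, rewrite $\tilde u=-(\bar u_{i-1}^++\bar u_{i+1}^-)$ there, and apply Lemma~\ref{lem6}.

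Your detour through estimating $\norm{u_i^+-\bar u_i^+}_{H^1(\Om_i^\pml)}$ is created only because in your triangle inequality you enlarged the domain from the layer to $\Om_i^\pml$; on that larger set the global and local stretchings $\tilde x$ and $\tilde x_i$ genuinely differ, the term becomes nonzero, and you are forced into the kernel-comparison or boundary-trace analysis you sketch. Stay on the layer and the term vanishes. Two minor corrections in passing: (a) the norm you should be tracking is $\norm{u_i^+-\hat u_i^+}_{H^1(\Om_{i+1}\cap B_L)}$, not $\Om_i\cap B_L$, since $u_i^+$ enters the decomposition of $\tilde u$ on $\Om_{i+1}$; (b) $\bar f_i^+$ is supported in $\Om_i$, not in $B_l$ (it is unbounded in $x_1$), although $\tilde y_i=\tilde y$ and $J_i=J$ still hold on all of $\Om_i$ because neither $x_2$-stretching is active there.
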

\begin{proof}
	Combining with Theorem \ref{th3}, we have $\tilde{u}(x)=-(\bar{u}_{i-1}^+(x)+\bar{u}_{i}^-(x))$ for any $x\in\Omega_i,i=1,\cdots,N$, where $\bar{u}_{0}^+=\bar{u}_{N}^-=0$ in $\R^2$. Then, by using \eqref{barui} and Lemma \ref{lem6}, we complete the proof.
\end{proof}

{\it Remark 2.2.} (i) Theorem~\ref{th4} shows that the solution $\hat v$ obtained by our PSTDDM is a fine approximation of the exact solution to the PML problem in the whole space.

(ii) The invisible constant in \eqref{eq4.1} generally polynomially depends on the size of $\Omega_i^\pml$ and the number $N$ of layers due to the inf--sup condition and the induction argument used in the proofs.

(iii) If Lemma~\ref{Linfsup} holds with $\al=0$, then \eqref{eq4.1} becomes
\begin{align}
	\norme{\tilde{u}-\hat{v}}_{B_L}\lesssim k (1+kL)^2 e^{-\frac{1}{2}k\ga\bar{\si}}\norm{f}_{L^2(B_l)},\label{eq4.1a}
\end{align}

\section{Source transfer block by block} \label{sec4}
Recall that the PSTDDM in Algorithms~\ref{alg1} and \ref{alg2} are layer-wise in which the original PML problems are decomposed into small PML problems by doing source transfers layer by layer from both positive and negative $x_1$-directions. Obviously, those small PML problems on strips may be solved by the same PSTDDM recursively but by doing source transfers along two $x_2$-directions, and as a results, they are decomposed further into small PML problems on blocks. In this section we introduce such block-wise PSTDDM by doing source transfers block by block (PSTDDMb), which further reduce the size of the subproblems and the computational costs to solve them. In this section, the generic constant is assumed to be independent of $k$, $N$, and $f$.

\subsection{The PSTDDMb for the PML problem in $\R^2$}
We only show the PSTDDM for solving the local PML equations in Algorithm~\ref{alg1}, which is described in Algorithm~\ref{alg3} below. The PSTDDMb for the original PML problem \eqref{eq_exactsol} is a combination of Algorithm~\ref{alg1} and Algorithm~\ref{alg3}. We first introduce some notation. For simplicity, we set $l_1=l_2=l,\ \bar l_1=\bar l_2=\bar l, \ d_1=d_2=d$ and assume that $l\eqsim1$ and $l<\bar l< l+d/2$.
We divide the whole space into layers:
\begin{align*}
	\Omega^1 & = \{x=(x_1,x_2)\in\R^2:x_2<\zeta_{2}\},    \\
	\Omega^j & = \{x=(x_1,x_2)\in\R^2:\zeta_j<x_2<\zeta_{j+1}\},\ j=2,\cdots,N-1, \\
	\Omega^N & = \{x=(x_1,x_2)\in\R^2:\zeta_N<x_2\}.
\end{align*}
Although $u_i^{\pm}$ in Algorithm~\ref{alg1} is defined in $\R^2$, only its restriction in $\Omega_i\cup\Omega_{i+1}$ is useful (see also Theorem~\ref{th3}). Since $\bar{u}_i^{\pm}$ defined in \eqref{eq_bup}--\eqref{eq_bum} equals to $u_i^{\pm}$ in $\Omega_i\cup\Omega_{i+1}$, the local PML problems in Algorithm~\ref{alg1} (see \eqref{eqs1a}--\eqref{eqs2b}) may be replaced by
\begin{align}\label{pmlalg1}
	J_i^{-1}\nabla \cdot (A_i\nabla \bar u_i^\pm)+k^2\bar u_i^\pm = F_i^\pm\quad \mathrm{in}\ \R^2, \quad i=1,\cdots, N-1,
\end{align}
where
\begin{align}\label{pmlalg1F}
	\begin{split}
		F_i^+&=-\bar{f}_{i}^+-f_{i+1}, \quad i=1,\cdots, N-1,\\
		F_i^-&=-\bar{f}_{i+1}^-,\quad i=2,\cdots, N-1,\quad F_1^-=-f_1-\bar{f}_{2}^-.
	\end{split}
\end{align}

Let $f_{i,j}^\pm=F_i^\pm$ in $\Omega^j$ and $ f_{i,j}^\pm=0$ elsewhere for $j=1,\cdots,N$. Denote by $\ga_i^+(x_2)$ and $\ga_i^-(x_2)$ smooth functions such that $\ga_i^+(t)=\be_i^+(t)$ and $\ga_i^-(t)=\be_i^-(t)$ for any $t\in\R$.

Similar to Algorithm~\ref{alg1}, we have the PSTDDM as shown below in Algorithm~\ref{alg3} for solving \eqref{pmlalg1}.
\begin{algorithm}
	\caption{Source Transfers for the $i^{th}$ local PML problems}
	\label{alg3}
	\begin{multicols}{2}

		\textbf{Step 1.}

		\hrulefill

		1. Let ${\bar{f}}_{i,1}^{\pm+}={f}_{i,1}^\pm$;

		2. While $j=1,\cdots,N-2$ do

		\qquad$\bullet$ Find ${u}_{i,j}^{\pm+}\in H^1(\R^2)$,
		\begin{align}\label{3.3}
			J_i^{-1}\na\cdot & (A_i\na{u}_{i,j}^{\pm+}) + k^2{u}_{i,j}^{\pm+} \\
			   & =- {\bar{f}}_{i,j}^{\pm+} - {f}_{i,j+1}^\pm,\notag
		\end{align}
		\qquad$\bullet$ Compute ${\Psi}_{i,j+1}^{\pm+}({\bar{f}}_{i,j}^{\pm+})\in H^{-1}(\R^2)$,
		\begin{align*}
			{\Psi}_{i,j+1}^{\pm+}({\bar{f}}_{i,j}^{\pm+}) = & J_i^{-1}\na\cdot\big(A_i\na ({\ga_{j+1}^+}{u}_{i,j}^{\pm+})\big) \\
			      & + k^2({\ga_{j+1}^+}{u}_{i,j}^{\pm+}),
		\end{align*}
		\qquad$\bullet$ Set
		\begin{align*}
			{\bar{f}}_{i,j+1}^{\pm+}=\begin{cases}
				{f}_{i,j+1}^\pm+{\Psi}_{i,j+1}^{\pm+}({\bar{f}}_{i,j}^{\pm+}) \text{ in } \Omega^{j+1}, \\
				0 \qquad\qquad\text{ elsewhere. }
			\end{cases}
		\end{align*}
		\quad End while

		3. Find ${u}_{i,N-1}^{\pm+}\in H^1(\R^2)$,
		\begin{align}
			J_i^{-1} \na\cdot & (A_i\na {u}_{i,N-1}^{\pm+}) + k^2{u}_{i,N-1}^{\pm+} \\
			   & = - {\bar{f}}_{i,N-1}^{\pm+} - {f}_{i,N}^\pm.\notag
		\end{align}

		\columnbreak

		\textbf{Step 2.}

		\hrulefill

		1. ${\bar{f}}_{i,N}^{\pm-}={f}_{i,N}^\pm$;

		2. While $j=N-1,\cdots,2$,

		\qquad$\bullet$ Find ${u}_{i,j}^{\pm-}\in H^1(\R^2)$,
		\begin{align}
			J_i^{-1} \na\cdot & (A_i\na{u}_{i,j}^{\pm-}) + k^2{u}_{i,j}^{\pm-} \\
			   & =-{\bar{f}}_{i,j+1}^{\pm-},\notag
		\end{align}
		\qquad$\bullet$ Compute ${\Psi}_{i,j}^{\pm-}({\bar{f}}_{i,j+1}^{\pm-})\in H^{-1}(\R^2)$,
		\begin{align*}
			{\Psi}_{i,j}^{\pm-}({\bar{f}}_{i,j+1}^{\pm-}) = & J_i^{-1}\na\cdot\big(A_i\na ({\ga_{j}^-}{u}_{i,j}^{\pm-})\big) \\
			      & + k^2({\ga_{j}^-}u_{i,j}^{-}),
		\end{align*}
		\qquad$\bullet$ Set
		\begin{align*}
			{\bar{f}}_{i,j}^-=\begin{cases}
				{f}_{i,j}^+ + {\Psi}_{i,j}^{\pm-}({\bar{f}}_{i,j+1}^{\pm-}) \text{ in } \Omega^{j}, \\
				0 \qquad\qquad \text{ elsewhere. }
			\end{cases}
		\end{align*}
		\quad End while

		3. Find ${u}_{i,1}^{\pm-}\in H^1(\R^2)$,
		\begin{align}\label{3.6}
			J_i^{-1}\na\cdot & (A_i\na u_{i,1}^{\pm-}) + k^2(u_{i,1}^{\pm-}) \\
			   & =-{\bar{f}}_{i,2}^{\pm-} - {f}_{i,1}^\pm.\notag
		\end{align}

	\end{multicols}
\end{algorithm}

By following the proof of Theorem~\ref{th3}, we may prove the following lemma.
\begin{lemma}\label{lem8}
	Let ${u}_{i,0}^{\pm+}\equiv0$ and ${u}_{i,N}^{\pm-}\equiv0$. The we have
	\begin{align*}
		{u}_i^\pm(x)=-\big({u}_{i,j-1}^{\pm+}(x)+{u}_{i,j}^{\pm-}(x)\big), \quad\mbox{for any } x\in\Omega^j,\quad 1\le i,j\le N.
	\end{align*}
\end{lemma}
By combining Theorem~\ref{th3} and Lemma~\ref{lem8} we have the following theorem which says that the solution $\tilde u$ to the original PML problem \eqref{eq_exactsol} can be expressed by $u_{i,j}^{\pm\pm}$.
\begin{theorem}\label{th5}
	Define $u_{0,j}^{+\pm}= u_{i,0}^{\pm+}=u_{N,j}^{-\pm}= u_{i,N}^{\pm-}\equiv0$. Then we have
	\begin{align*}
		\tilde{u}(x)={u}_{i-1,j-1}^{++}(x)+{u}_{i-1,j}^{+-}(x)+{u}_{i,j-1}^{-+}(x)+{u}_{i,j}^{--}(x),\quad\forall x\in \Omega_i\cap\Omega^j,\quad 1\le i,j\le N.
	\end{align*}
\end{theorem}

\subsection{The PSTDDMb for the truncated PML problem}
For the ease of presentation, we take $\sigma_1(t)$ and $\sigma_2(t)$ to be polynomials in $(-\bar l,-l)\cup(l,\bar l)$ of the form $\sigma_1(t)=\sigma_2(t)=\sigma_0\big(\frac{t-l}{\bar l-l}\big)^q$ for some $q\ge 2$. Let $\Omega_{i,j}^{\pml}=(\ze_i-d,\ze_{i+2}+d)\times(\ze_j-d,\ze_{j+2}+d)$ for $i,j=1,\cdots,N-1$ and let $\Omega_{i,j}^{\tru}=\Omega_{i,j}^\pml\cap\Omega^j$ for $j=1,\cdots,N-1$ and $\Omega_{i,N}^{\tru}=\Omega_{i,N-1}^\pml\cap\Omega^N$.

In order to truncate the problems \eqref{3.3}--\eqref{3.6}, we define the PML complex coordinate stretching $\til{x}^{i,j}=(\til{x}_1^i(x_1),\til{x}_2^j(x_2))$ outside the domain $(\ze_i,\ze_{i+2})\times(\ze_j,\ze_{j+2})$, where $\til{x}_1^i(x_1)$ is defined in \eqref{tx1} and
\begin{equation}\label{tx2}
	\tilde{x}_2^j(x_2)=x_2+\i\int_{\ze_{j+1}}^{x_2}\si_2^j(t) dt,
	\quad\si_2^j(t):=
	\begin{cases}
		\si_2(t+\ze_{N+1}-\ze_{j+2}) & \text{if } t>\ze_{j+1}, \\
		\si_2(t-\ze_j+\ze_1)  & \text{if } t\le\ze_{j+1}.
	\end{cases}
\end{equation}
Then the PML equation's coefficients are defined as
\begin{align*}
	A_{i,j}(x)=\diag\bigg(\frac{\tilde{x}_2^j(x_2)'}{\tilde{x}_1^i(x_1)'},\frac{\tilde{x}_1^i(x_1)'}{\tilde{x}_2^j(x_2)'}\bigg),\quad J_{i,j}(x)=\tilde{x}_1^i(x_1)'\tilde{x}_2^j(x_2)'.
\end{align*}

Next we solve the truncated PML problems \eqref{eqs3a}--\eqref{eqs4b} in Algorithm~\ref{alg2} by using the PSTDDM in $x_2$ directions, that is, we introduce Algorithm~\ref{alg4} to solve the following type of truncated PML problem: Find $\hat{u}_i^\pm\in H_0^1(\Omega_i^{\pml})$ such that
\begin{align}
	J_i^{-1}\na\cdot & (A_i\na {\hat u}_i^\pm) + k^2 {\hat u}_i^\pm = \check F_i^\pm,\quad i=1,\cdots,N-1.\label{checku}
\end{align}
We remark that the output $\check u_i^\pm$ of Algorithm~\ref{alg4} is an approximation of ${\hat u}_i^\pm$. Set $\check f_{i,j}^\pm=\check F_i^\pm$ in $\Omega^j$ and $ \check f_{i,j}^{\pm}=0$ elsewhere.
\begin{algorithm}
	\caption{: $\check u_i^\pm=$PSTDDM$_i(\check F_i^\pm)$.
		Source Transfers for Problem \eqref{checku}.
	}
	\label{alg4}
	\begin{multicols}{2}

		\textbf{Step 1.}

		\hrulefill

		1. Let $\hat{f}_{i,1}^{\pm+}=\check{f}_{i,1}^{\pm}$;

		2. While $j=1,\cdots,N-2$ do

		\qquad$\bullet$ Find $\hat{u}_{i,j}^{\pm+}\in H_0^1(\Omega_{i,j}^{\pml})$,
		\begin{align}\label{eqalg5-1}
			J_{i,j}^{-1}\na\cdot & (A_{i,j}\na \hat{u}_{i,j}^{\pm+}) + k^2\hat{u}_{i,j}^{\pm+} \\
			   & =-\hat{f}_{i,j}^{\pm+}-\check{f}_{i,j+1}^{\pm},\notag
		\end{align}
		\qquad$\bullet$ Compute $\hat{\Psi}_{i,j+1}^{\pm+}(\hat{f}_{i,j}^{\pm+})$, 
		\begin{align*}
			\hat{\Psi}_{i,j+1}^{\pm+}(\hat{f}_{i,j}^{\pm+}) = & J_{i,j}^{-1}\na\cdot\big(A_{i,j}\na ({\ga_{j+1}^+}\hat{u}_{i,j}^{\pm+})\big) \\
			       & + k^2({\ga_{j+1}^+}\hat{u}_{i,j}^{\pm+}),
		\end{align*}
		\qquad$\bullet$ Set
		\begin{align*}
			\hat{f}_{i,j+1}^{\pm+}=\begin{cases}
				\check{f}_{i,j+1}^{\pm}+\hat{\Psi}_{i,j+1}^{\pm+}(\hat{f}_{i,j}^{\pm+}) \text{ in } \Omega_{i,j+1}^\tru, \\
				0 \qquad\qquad\text{ elsewhere. }
			\end{cases}
		\end{align*}
		\quad End while;

		3. Find $\hat{u}_{i,N-1}^{\pm+}\in H_0^1(\Omega_{i,N-1}^{\pml})$,
		\begin{align}\label{eqalg5-2}
			J_{i,N-1}^{-1}\na\cdot & (A_{i,N-1}\na \hat{u}_{i,N-1}^{\pm+}) + k^2\hat{u}_{i,N-1}^{\pm+} \\
			   & =-\hat{f}_{i,N-1}^{\pm+} - \check{f}_{i,N}^{\pm}.\notag
		\end{align}

		\columnbreak

		\textbf{Step 2.}

		\hrulefill

		1. Let $\hat{f}_{i,N}^{\pm-}=\check{f}_{i,N}^{\pm}$;

		2. While $j=N-1,\cdots,2$,

		\qquad$\bullet$ Find $\hat{u}_{i,j}^{\pm-}\in H_0^1(\Omega_{i,j}^{\pml})$
		\begin{align}\label{eqalg6-1}
			J_{i,j}^{-1}\na\cdot & (A_{i,j}\na \hat{u}_{i,j}^{\pm-}) + k^2\hat{u}_{i,j}^{\pm-} \\
			   & = - \hat{f}_{i,j+1}^{\pm-},\notag
		\end{align}
		\qquad$\bullet$ Compute $\hat{\Psi}_{i,j}^{\pm-}(\hat{f}_{i,j+1}^{\pm-})$, 
		\begin{align*}
			\hat{\Psi}_{i,j}^{\pm-}(\hat{f}_{i,j+1}^{\pm-}) = & J_{i,j}^{-1}\na\cdot(A_{i,j}\na ({\ga_{j}^-}\hat{u}_{i,j}^{\pm-})) \\
			       & + k^2({\ga_{j}^-}\hat{u}_{i,j}^{\pm-}),
		\end{align*}
		\qquad$\bullet$ Set
		\begin{align*}
			\hat{f}_{i,j}^{\pm-}=\begin{cases}
				\check{f}_{i,j}^{\pm}+\Psi_{i,j}^{\pm-}(\hat{f}_{i,j+1}^{\pm-}) \text{ in } \Omega_{i,j}^{\tru}, \\
				0 \text{ elsewhere. }
			\end{cases}
		\end{align*}
		\quad End while;

		3. Find $\hat{u}_{i,1}^{\pm-}\in H_0^1(\Omega_{i,1}^{\pml})$,
		\begin{align}\label{eqalg6-2}
			J_{i,1}^{-1}\na\cdot & (A_{i,1}\na \hat{u}_{i,1}^{\pm-})+k^2\hat{u}_{i,1}^{\pm-} \\
			   & =-\hat{f}_{i,2}^{\pm-}-\check{f}_{i,1}^{\pm}.\notag
		\end{align}
	\end{multicols}
	\hrulefill

	Output $\check u_i^{\pm}\in H_0^1(\Omega_i^{\pml})$ by $\check u_i^{\pm} =-(\hat u_{i,j-1}^{\pm+}+\hat u_{i,j}^{\pm-})$ in $ \Omega_{i,j}^\tru$ where $\hat u_{i,0}^{\pm+}=\hat u_{i,N}^{\pm-}=0$.
\end{algorithm}

By inserting Algorithm~\ref{alg4} into Algorithm~\ref{alg2}, we obtain the PSTDDMb as described in Algorithm~\ref{alg5} for solving the truncated PML problem \eqref{eq_tpml}.
\begin{algorithm}
	\caption{PSTDDMb (Source Transfers block by block)}
	\label{alg5}
	\begin{multicols}{2}

		\textbf{Step 1.}

		\hrulefill

		1. Let $\check{f}_{1}^+=f_1$;

		2. While $i=1,\cdots,N-2$ do

		\qquad$\bullet$ Let $\check F_i^+:=- \check{f}_{i}^+ - f_{i+1}$.

		\hskip 28pt Call $\check u_i^+=$PSTDDM$_i(\check F_i^+)$.

		\qquad$\bullet$ Compute $\check{\Psi}_{i+1}^+(\check{f}_{i}^+)\in H^{-1}(\Omega_i^{\pml})$,
		\begin{align*}
			\check{\Psi}_{i+1}^+(\check{f}_{i}^+) = & J_i^{-1}\na\cdot\big(A_i\na ({\be_{i+1}^+}\check{u}_i^+)\big) \\
			     & + k^2(\be_{i+1}^+\check{u}_i^+).
		\end{align*}
		\qquad$\bullet$ Set
		\begin{align*}
			\check{f}_{i+1}^+=\begin{cases}
				f_{i+1}+\check{\Psi}_{i+1}^+(\check{f}_{i}^+) \text{ in } \Omega_{i+1}\cap B_L, \\
				0 \text{ elsewhere. }
			\end{cases}
		\end{align*}
		\quad End while;

		3. For $i=N-1$,

		\hskip 28pt Let $\check F_{N-1}^+:=- \check{f}_{N-1}^+ - f_N$.

		\hskip 28pt Call $\check u_{N-1}^+=$PSTDDM$_{N-1}(\check F_{N-1}^+)$.
		\columnbreak

		\textbf{Step 2.}

		\hrulefill

		1. Let $\check{f}_{N}^- =f_N$;

		2. While $i=N-1,\cdots,2$,

		\qquad$\bullet$ Let $\check F_i^-:=- \check{f}_{i+1}^-$.

		\hskip 28pt Call $\check u_i^-=$PSTDDM$_i(\check F_i^-)$.

		\qquad$\bullet$ Compute $\check{\Psi}_i^-(\check{f}_{i+1}^-)\in H^{-1}(\Omega_i^{\pml})$,
		\begin{align*}
			\check{\Psi}_i^-(\check{f}_{i+1}^-)= & J_i^{-1}\na\cdot(A_i\na ({\be_i^-}\check{u}_i^-)) \\
			     & + k^2(\be_i^-\check{u}_i^-).
		\end{align*}
		\qquad$\bullet$ Set
		\begin{align*}
			\check{f}_i^-=\begin{cases}
				f_i+\Psi_i^-(\check{f}_{i+1}^-) \text{ in } \Omega_i\cap B_L, \\
				0 \text{ elsewhere. }
			\end{cases}
		\end{align*}
		\quad End while;

		3. For i=1,

		\hskip 28pt Let $\check F_1^-:=- \check{f}_{2}^- - f_1$.

		\hskip 28pt Call $\check u_1^-=$PSTDDM$_1(\check F_1^-)$.
	\end{multicols}
\end{algorithm}

For further analysis we first give in the following lemma an improvement of the local inf--sup condition \eqref{localinfsup}.
\begin{lemma}\label{lem6e}
	Suppose the thickness of PML $d\eqsim l/N$. For sufficiently large $\sigma_0>1$, we have the inf--sup condition for any $\phi\in H_0^1(\Omega_{i,j}^\pml)$
	\begin{align}
		\sup_{\psi\in H_0^1(\Omega_{i,j}^\pml)} \frac{(A_{i,j}\na\phi,\na\psi)-k^2(J_{i,j}\phi,\psi)}{\norme{\psi}_{\Omega_{i,j}^\pml}}\geq \mu_N \norme{\phi}_{\Omega_{i,j}^\pml},
	\end{align}
	where $\mu_N^{-1}\leq C(\sigma_0) (1+kl/N)^{1+\al}$ and $C(\sigma_0)$ is a constant that may polynomially depend on $\sigma_0$.
\end{lemma}
\begin{proof}
	We prove the lemma by using the scaling argument. Let $l_N:=2l/N$ and introduce the affine mapping $m:\hat\Omega:=(-1-\hat d,1+\hat d)\times(-1-\hat d,1+\hat d)\rightarrow\Omega_{i,j}^\pml$ as $x=m(\hat x)=2l/N\hat x +(\ze_{i+1},\ze_{j+1})$ where $\hat d=d N/(2l)$. For any function $v$ on $\Omega_{i,j}^\pml$, let $\hat v:=v\circ m$ be the corresponding function on $\hat\Omega$. Under this affine mapping, the following PML problem
	\begin{align*}
		-\na(A_{i,j}\na\phi)-k^2J_{i,j}\phi=F\quad\text{on }H_0^1(\Omega_{i,j}^\pml)
	\end{align*}
	becomes
	\begin{align}\label{eq6e0}
		-\na_{\hat x}(\hat A_{i,j}\na_{\hat x}\hat{\phi})-(k l_N)^2\hat J_{i,j}\hat{\phi}=l_N^2\hat{F}.
	\end{align}
	If $k l_N\gtrsim 1$, from Lemma~\ref{Linfsup} we get
	\begin{align}\label{eq6e1}
		\Vert{\hskip -1pt}\vert{\hat\phi}\vert{\hskip -1pt}\Vert:=\Big(\big|\hat{\phi}\big|_{H^1(\hat\Omega)}^2+(k l_N)^2\big\|{\hat{\phi}}\big\|_{L^2(\hat\Omega)}^2 \Big)^{\frac12} \lesssim (k l_N)^{1+\al} \Vert{\hskip -1pt}\vert{l_N^2\hat{F}(z)}\Vert{\hskip -1pt}\vert^*,
	\end{align}
	under the condition that $\sigma_0\hat d>1$ is sufficiently large. Here $\norme{\cdot}^*:=\sup_{\psi\in H^1(\hat\Omega)}\frac{\pd{\cdot, \psi}}{\Vert{\hskip -1pt}\vert{\psi}\vert{\hskip -1pt}\Vert}$. On the other hand, if $k l_N$ is small enough, then it's known that the problem \eqref{eq6e0} is elliptic, and hence
	\begin{align}\label{eq6e2}
		\big\|\hat{\phi}(z)\big\|_{H^1(\hat\Omega)}\lesssim \Vert{\hskip -1pt}\vert{l_N^2\hat{F}(z)}\Vert{\hskip -1pt}\vert^*.
	\end{align}
	By combining \eqref{eq6e1}, \eqref{eq6e2}, and the following identities
	\begin{align*}
		(kl_N)^2\big\|{\hat{\phi}}\big\|_{L^2(\hat\Omega)}^2 = k^2\norm{\phi}_{L^2(\Omega_{i,j}^\pml)}^2,\;
		\big|\hat{\phi}\big|_{H^1(\hat\Omega)}^2 = \abs{\phi}_{H^1(\Omega_{i,j}^\pml)}^2,\;
		\Vert{\hskip -1pt}\vert{l_N^2\hat{F}}\Vert{\hskip -1pt}\vert^* = \norme{F}_{\Omega_{i,j}^\pml}^*,
	\end{align*}
	we have
	\begin{align*}
		\norme{\phi}_{\Omega_{i,j}^\pml}\lesssim \big(1+(k l_N)^{1+\al}\big)\norme{F}_{\Omega_{i,j}^\pml}^*.
	\end{align*}
	This completes the proof of the lemma.
\end{proof}

We remark that, by using the arguments for\cite[Lemma~3.3]{cx} and the above lemma, there also holds the following inf--sup condition for the PML equation in $\R^2$,
\begin{align}
	\sup_{\psi\in H^1(\R^2)} \frac{(A_{i,j}\na\phi,\na\psi)-k^2(J_{i,j}\phi,\psi)}{\norme{\psi}_{\R^2}} \gtrsim \mu_N \norme{\phi}_{\R^2}. \label{eq_misir2}
\end{align}
The proof is omitted.

The above inf-sup conditions indicate that if the size of blocks is small enough such that $kl/N\lesssim 1$, then the local truncated PML problems in Algorithm~\ref{alg4} (see \eqref{eqalg5-1}--\eqref{eqalg6-2}) are (nearly) elliptic which may be solved efficiently by some existent methods such as multigrid PCG method. Therefore in the following analysis we will assume that $kl/N\gtrsim 1$.

Next we introduce the auxiliary functions $\bar{u}_{i,j}^{\pm\pm}\in H^1(\R^2)$, $i,j=1,2,\cdots,N-1$ which are solutions to the PML problems:
\begin{align}
	J_{i,j}^{-1}\na\cdot(A_{i,j}\na\bar{u}_{i,j}^{\pm+}) + k^2\bar{u}_{i,j}^{\pm+} & =- {\bar{f}}_{i,j}^{\pm+} - {f}_{i,j+1}^\pm,\quad j=1,\cdots,N-1,\label{buija} \\
	J_{i,j}^{-1}\na\cdot(A_{i,j}\na\bar{u}_{i,j}^{\pm-}) + k^2\bar{u}_{i,j}^{\pm-} & =- {\bar{f}}_{i,j+1}^{\pm-},\quad j=2,\cdots,N-1,    \\
	J_{i,1}^{-1}\na\cdot(A_{i,1}\na\bar{u}_{i,1}^{\pm-}) + k^2\bar{u}_{i,1}^{\pm-} & = -\bar{f}_{i,2}^{\pm-} - f_{i,1}^\pm.\label{buijb}
\end{align}
Clearly, $\bar u_{i,j}^{\pm\pm}=u_{i,j}^{\pm\pm}$ in $\Omega^j\cup\Omega^{j+1}$ and $\hat u_{i,j}^{\pm\pm}$ in Algorithm~\ref{alg4} approximates $\bar u_{i,j}^{\pm\pm}$.

Note that the thickness of the PML in this section is allowed to be small ($d\ll 1$). The following lemma is going to be used to derive some new estimate of $\im\rho$ which is usually better than \eqref{eq_irxyi} when $d\ll 1$.
\begin{lemma}\label{lemma_imagrho}
	For any $z_1=a_1+\i b_1$, $z_2=a_2+\i b_2$ with $a_1,a_2,b_1,b_2\in\R^2$ such that $\beta=\sqrt{\frac{a_1^2+a_2^2}{b_1^2+b_2^2}}<1$, we have $\im\big((z_1^2+z_2^2)^\frac12\big)\geq\sqrt{1-\beta^2}(b_1^2+b_2^2)^\frac12$.
\end{lemma}
\begin{proof}
	Denote by $\vec{a} = (a_1,a_2)$ and $\vec{b}=(b_1,b_2)$, we have
	\begin{align*}
		\im\big((z_1^2+z_2^2)^\frac12\big) = & \sqrt{ \frac{ \abs{\big(|\vec{a}|^2-|\vec{b}|^2\big) + 2\i \vec{a}\cdot\vec{b}}- \big(|\vec{a}|^2-|\vec{b}|^2\big)}{2} } \\
		\geq     & \sqrt{ |\vec{b}|^2-|\vec{a}|^2 } \geq \sqrt{1-\beta^2} |\vec{b}|
	\end{align*}
	which completes the proof of the lemma.
\end{proof}

Lemma~\ref{lemma_imagrho} and \eqref{rho} imply that if
\begin{align}
	\abs{ \int_{y_1}^{x_1} \sigma_1(t) dt }^2 + \abs{ \int_{y_2}^{x_2} \sigma_2(t) dt }^2 \geq \frac43 \rho(x,y)^2, \label{eq_imagrhocond}
\end{align}
there holds
\begin{align}
	\im\rho(\tilde x,\tilde y) \geq \frac12 \bigg( \abs{ \int_{y_1}^{x_1} \sigma_1(t) dt }^2 + \abs{ \int_{y_2}^{x_2} \sigma_2(t) dt }^2 \bigg)^\frac12.\label{eq_imagrhoxi}
\end{align}

To proceed, we decompose $\Omega^j$ into two parts $\Omega^j=\Omega_{i+}^j\cup\Omega_{i-}^j$:
\begin{align}\label{O+-}
	\begin{split}\Omega_{i+}^j&=\{x\in\Omega^j:|x_1-\ze_{i+1}|>2l/N+d/2\},\\
		\Omega_{i-}^j&=\{x\in\Omega^j:|x_1-\ze_{i+1}|\leq 2l/N+d/2\}.
	\end{split}
\end{align}

The following two lemmas give some estimates of $\bar u_{i,j}^{\pm\pm}$.
\begin{lemma}\label{lemma_omp}
	Assume that $\sigma_0d>1$ be sufficiently large, $d\eqsim l/N$ and $ kl/N\gtrsim 1$. Then we have for $j=1,\cdots,N-1$,
	\begin{align}
		k\abs{\bar{u}_{i,j}^{\pm+}(x)} +\abs{\na\bar{u}_{i,j}^{\pm+}(x)} & \lesssim k(l/N)^{-\frac12} e^{-\frac{1}{2}k \bar{\bar{\si}}} \norm{f}_{L^2(B_l)}\quad \forall x\in\Omega_{i+}^{j+1}; \label{eq_omp_bupmp} \\
		k\abs{\bar{u}_{i,j}^{\pm-}(x)}+\abs{\na\bar{u}_{i,j}^{\pm-}(x)} & \lesssim k(l/N)^{-\frac12} e^{-\frac{1}{2}k\bar{\bar{\si}}} \norm{f}_{L^2(B_l)}\quad \forall x\in\Omega_{i+}^j,
	\end{align}
	where
	\begin{align}
		\bar{\bar\sigma} = \min\bigg( \int_{l}^{l+d/2} \sigma_1(t) dt, \int_{l}^{l+d/2} \sigma_2(t) dt \bigg).
	\end{align}
\end{lemma}
\begin{proof}
	We show only the proof of \eqref{eq_omp_bupmp} for $\bar{u}_{i,j}^{++}$. From \eqref{pmlalg1}--\eqref{pmlalg1F} and Algorithm~\ref{alg3}, similar to \eqref{u1ia}, we have, for $j=1,\cdots,N-1$,
	\begin{align} \label{eq_upp}
		\bar{u}_{i,j}^{++}(x) & = - \int_{\cup_{p=1}^{j+1}\Omega^p} f_{i+1}(y)J_{i}(y)G(\tilde{x}^i,\tilde{y}^i)dy - \int_{\cup_{p=1}^{j+1}\Omega^p} \bar{f}_i^+(y)J_{i}(y)G(\tilde{x}^i,\tilde{y}^i)dy\notag \\
		   & =:\bar{u}_{i,j}^{++\mathrm{I}}(x)+\bar{u}_{i,j}^{++\mathrm{II}}(x) \quad \forall x\in\Omega_{i+}^{j+1}.
	\end{align}

	First, noting that $\abs{x-y}\geq \abs{x_1-y_1}\geq \frac{d}2$ for $x\in\Omega_{i+}^{j+1}$ and $y\in\Om_i\cup \Om_{i+1}$, by similar arguments to \eqref{eq_imagrhocond}--\eqref{eq_imagrhoxi} we get
	\eq{\label{imrho}\im\rho(\tilde{x}^i,\tilde{y}^i)\geq \frac12\bar{\bar{\si}}\quad\text{for } x\in\Omega_{i+}^{j+1}, y\in(\ze_i,\ze_{i+2})\times(-l-d/2,l+d/2).}
	if
	\eqn{\bigg|\int_{l}^{l+\abs{x_1-\ze_{i+1}}-\frac{2l}{N}} \sigma_1(t) dt\bigg|^2\ge\frac43\Big((x_1-y_1)^2+\Big(l+\frac{d}{2}\Big)^2\Big).}
	Noting that $\big|\int_{l}^{l+\abs{x_1-\ze_{i+1}}-\frac{2l}{N}} \sigma_1(t) dt\big|\eqsim \si_0\big(\abs{x_1-\ze_{i+1}}-\frac{2l}{N}\big)\eqsim\si_0\abs{x_1-y_1}$, we conclude that \eqref{imrho} holds for sufficiently large $\sigma_0d$. Therefore, it follows from Lemma~\ref{lemma_G} and the definitions of the complex coordinate stretching $\tilde{x}^i$ and $\tilde{y}^i$ that, for $x\in\Omega_{i+}^{j+1}$
	\begin{align}\label{eq_uppi}
		\abs{\bar{u}_{i,j}^{++\mathrm{I}}(x)}
		\lesssim & \norm{f}_{L^2(\Om_{i+1}\cap B_l)}\norm{J_{i}(\cdot)G(\tilde{x}^i,\tilde{\cdot}^i)}_{L^2(\Om_{i+1}\cap B_l)}   \\
		\lesssim & \norm{f}_{L^2(\Om_{i+1}\cap B_l)}(l/N)^\frac12\max_{y\in\Om_{i+1}}|G(\tilde{x}^i,\tilde{y}^i)|\notag    \\
		\lesssim & \norm{f}_{L^2(B_l)}(l/N)^\frac12\max_{y\in\Om_{i+1}} e^{-\frac12 k\im\rho(\tilde{x}^i,\tilde{y}^i)} (kd/2)^{-\frac12}\notag \\
		\lesssim & k^{-\frac12} e^{-\frac12 k\bar{\bar\si}} \norm{f}_{L^2(B_l)}.\notag
	\end{align}

	It remains to estimate $\bar{u}_{i,j}^{++\mathrm{II}}$.	Let $\Omega_i^{j,\mathrm{in}}=\{x\in\Omega_i:\zeta_1-d/2<x_2<\ze_{j+2}\}$ for $j=1,\cdots,N-2$, and $\Omega_i^{N-1,\mathrm{in}}=\{x\in\Omega_i:\zeta_1-d/2<x_2<\ze_{N+1}+d/2\}$. Denote by $\Omega_i^{j,\mathrm{out}}=\big(\cup_{p=1}^{j+1}\Omega^p\cap\Omega_i\big)\setminus\Omega_i^{j,\mathrm{in}}$.
	It is clear that
	\begin{align}\label{eq_uijII}
		\abs{\bar{u}_{i,j}^{++\mathrm{II}}(x)} \leq \bigg| \int_{\Omega_i^{j,\mathrm{in}}}\bar{f}_i^+(y)J_i(y)G(\tilde{x}^i,\tilde{y}^i) dy\bigg| \hskip -1pt+\hskip -1pt \bigg| \int_{\Omega_i^{j,\mathrm{out}}}\bar{f}_i^+(y)J_i(y)G(\tilde{x}^i,\tilde{y}^i)dy\bigg|.
	\end{align}
	Similar to \eqref{eq_uppi}, by using \eqref{imrho} and the assumption $ kd\eqsim kl/N \gtrsim 1$, we may show that
	\begin{align} \label{eq_uppii2}
		 & \bigg|\int_{\Omega_i^{j,\mathrm{in}}}\bar{f}_i^+(y)J_{i}(y)G(\tilde{x}^i,\tilde{y}^i)dy \bigg|		 \lesssim \norme{\bar{f}_i^+}_{\Omega_i^{j,\mathrm{in}}}^*\norme{J_{i}(\cdot)G(\tilde{x}^i,\tilde{\cdot}^i)}_{\Omega_i^{j,\mathrm{in}}} \\
		 & \lesssim \norme{\bar{f}_i^+}_{\Omega_i^{j,\mathrm{in}}}^* \abs{\Omega_i^{j,\mathrm{in}}}^{\frac12} \max_{y\in\Omega_i^{j,\mathrm{in}}} \Big( k\abs{G(\tilde{x}^i,\tilde{y}^i)}+\abs{\na_yG(\tilde{x}^i,\tilde{y}^i)} \Big) \notag \\
		 & \lesssim k^{\frac12} e^{-\frac12 k \bar{\bar\si}} \norme{\bar{f}_i^+}_{\Omega_i^{j,\mathrm{in}}}^*.\notag
	\end{align}
	Next we estimate $\norme{\bar{f}_i^+}_{\Omega_i^{j,\mathrm{in}}}^*$. For any $D=(\zeta_i,\zeta_{i+1})\times(a_1,a_2)\subset\Omega_i\cap B_L$, by using Algorithm~\ref{alg1}, the definition of the source transfer operator \eqref{psi1}, and integration by parts, we conclude that
	\begin{align*}
		\norme{\bar{f}_i^+}_{D}^* & \lesssim (kl/N)^{-1} \norme{\bar u_{i-1}^+}_{D} + \norme{f_i}_{D}^*.
	\end{align*}
	Similar to \cite[Lemma 3.2]{cx} we have the following estimate whose proof is omitted.
	\eqn{		\max_{x\in\R^2} \int_{\R^2} \bigg( k^2\abs{G(\tilde{x},\tilde{y})}^2 + \abs{\na_y G(\tilde{x},\tilde{y})}^2 \bigg ) dy\lesssim k. }
	Therefore, from \eqref{barui} and \eqref{u1ia}, we have
	\begin{align}\label{eq_fip}
		\norme{\bar{f}_i^+}_{D}^*
		 & \lesssim (kl/N)^{-1}\abs{D}^\frac12\max_{x\in\R^2} \norme{G(\tilde{x},\tilde\cdot)}_{\R^2}\norm{f}_{L^2(B_l)} + \norme{f_i}_{D}^* \\
		 & \lesssim \big( (kl/N)^{-\frac12} +k^{-1}\big) \norm{f}_{L^2(B_l)} \notag        \\
		 & \lesssim (kl/N)^{-\frac12} \norm{f}_{L^2(B_l)} .\notag
	\end{align}

	In order to estimate the second term in the right hand side of \eqref{eq_uijII}, we need some estimates on $\bar{u}_{i}^+$ defined in \eqref{eq_bup}. By using Lemma~\ref{lemma_G} and \eqref{u1ia} and similar arguments as above we conclude that, for $i=1,2,\cdots,N-1$ and $y\in\Omega_i\cup\Omega_{i+1}$ satisfying $\abs{y_2}>l+d/2$,
	\begin{align}
		\abs{\bar{u}_{i}^+(y)} \lesssim & k(kd)^{-\frac12} e^{-\frac12 k\bar{\bar\si}}\norme{f}_{B_l}^*, \label{eq_baruip1} \\
		\abs{\na\bar{u}_{i}^+(y)} \lesssim & k^2(kd)^{-\frac12} e^{-\frac12 k\bar{\bar\si}}\norme{f}_{B_l}^*. \label{eq_baruip2}
	\end{align}
	We also need estimates of two integrals on ${\Omega_i^{j,\mathrm{out}}}$ involves the Green function $G$. Noting that $|x_m-y_m|\geq d/2\ (m=1,2)$, from \eqref{eq_irxyi}, we have, ${\mathrm{Im}}\rho(\tilde x^i,\tilde y^i) \ge 0$. Moreover, from \cite[Lemma 3.1]{cx} (with $\be=2$ there), we have
	\begin{align*}
		{\mathrm{Im}}\rho(\tilde x^i,\tilde y^i)\geq\frac14\sigma_0|x-y|\ge \frac14\sigma_0|x_2-y_2| \quad\mathrm{if}\ |x-y|\geq 4\sqrt{2} M,
	\end{align*}
	where $M=l+d$.
	Therefore, from Lemma~\ref{lemma_G} we have
	\begin{align*}
		  & \int_{\Omega_i^{j,\mathrm{out}}} \big| G(\tilde x^i,\tilde y^i) \big| dy                        \\
		\lesssim & k^{-1/2} \bigg( \int_{\Omega_i^{j,\mathrm{out}}\qquad\quad\atop |x_2-y_2|<4\sqrt{2}M} \frac{1}{|x_2-y_2|^\frac12}dy + \int_{\Omega_i^{j,\mathrm{out}}\qquad\quad\atop |x_2-y_2|\ge 4\sqrt{2}M} \frac{e^{-\frac18 k\sigma_0|x_2-y_2|}}{|x_2-y_2|^\frac12}dy \bigg) \\
		\lesssim & \frac{l}{N} k^{-\frac12}\Big(1 +(k\sigma_0)^{-\frac12} e^{-\frac14\sqrt{2}M k\sigma_0}\Big),
	\end{align*}
	which implies that
	\begin{align}
		\int_{\Omega_i^{j,\mathrm{out}}} \big| G(\tilde x^i,\tilde y^i) \big| dy\lesssim k^{-\frac12}l/N\quad \forall x\in\Omega_{i+}^{j+1},\label{eq_gi1}
	\end{align}
	if $k\sigma_0\gtrsim 1$. Similarly, for $k\sigma_0\gtrsim 1$, we have
	\begin{align}
		\int_{\Omega_i^{j,\mathrm{out}}} \big| \nabla_y G(\tilde x^i,\tilde y^i) \big| dy\lesssim k^{\frac12}l/N\quad \forall x\in\Omega_{i+}^{j+1}.\label{eq_gi2}
	\end{align}
	Therefore, by using Algorithm~\ref{alg1}, \eqref{psi1}, \eqref{beta1} and \eqref{eq_baruip1}--\eqref{eq_gi2},
	\begin{align}\label{eq_uijIIb}
		  & \bigg| \int_{\Omega_i^{j,\mathrm{out}}}\bar{f}_i^+(y) J_i(y) G(\tilde{x}^i,\tilde{y}^i)dy \bigg|             \\
		= & \bigg| \int_{\Omega_i^{j,\mathrm{out}}} \big(f_i(y)+\Psi_i^+(\bar f_{i-1}^+)(y)\big) J_i(y) G(\tilde{x}^i,\tilde{y}^i)dy \bigg|  \notag       \\
		= & \bigg| \int_{\Omega_i^{j,\mathrm{out}}} f_i(y) J_i(y) G(\tilde{x}^i,\tilde{y}^i)dy- \int_{\Omega_i^{j,\mathrm{out}}} \be_i^+(y_1) f_i(y) J_i(y) G(\tilde{x}^i,\tilde{y}^i)dy  \notag \\
		  & - \int_{\Omega_i^{j,\mathrm{out}}} \frac{\tilde x_2^i(y_2)'}{\tilde x_1^i(y_1)'}(\be_i^{+})'(y_1)\bar{u}_{i-1}^+(y)\frac{\pa G(\tilde{x}^i,\tilde{y}^i)}{\pa y_1} dy \notag   \\
		  & + \int_{\Omega_i^{j,\mathrm{out}}} (\be_i^+)'(y_1) \frac{\tilde x_2^i(y_2)'}{\tilde x_1^i(y_1)'} \frac{\pa \bar{u}_{i-1}^+(y)}{\pa y_1} G(\tilde{x}^i,\tilde{y}^i) dy \bigg| \notag  \\
		\lesssim & (l/N)^{-1}\bigg( \max_{y\in\Omega_i^{j,\mathrm{out}}} \abs{\bar{u}_{i-1}^+(y)} \int_{\Omega_i^{j,\mathrm{out}}}\abs{\frac{\pa G(\tilde{x}^i,\tilde{y}^i)}{\pa y_1}}dy\notag   \\
		  & + \max_{y\in\Omega_i^{j,\mathrm{out}}}\abs{\na\bar{u}_{i-1}^+(y)} \int_{\Omega_i^{j,\mathrm{out}}}\abs{G(\tilde{x}^i,\tilde{y}^i)}dy \bigg)\notag       \\
		\lesssim & k^\frac32 (kd)^{-\frac12} e^{-\frac12 k\bar{\bar\si}} \norme{f}_{B_l}^* \notag               \\
		\lesssim & (l/N)^{-\frac12} e^{-\frac12 k\bar{\bar\si}} \norm{f}_{L^2(B_l)},     \notag
	\end{align}
	where we have used the fact that $f_i=0$ in $\Omega_i^{j,\mathrm{out}}$.

	Thus by combining \eqref{eq_uijII}--\eqref{eq_fip}, and\eqref{eq_uijIIb}, we conclude that, for any $x\in\Omega_{i+}^{j+1}$,
	\begin{align} \label{eq_uppii}
		\abs{\bar{u}_{i,j}^{++\mathrm{II}}(x)} \lesssim & (l/N)^{-\frac12} e^{-\frac12 k\bar{\bar\si}} \norm{f}_{L^2(B_l)}.
	\end{align}

	Combining \eqref{eq_upp}, \eqref{eq_uppi}, \eqref{eq_uppii}, and $k\gtrsim1$ yields that, for $x\in\Omega_{i+}^{j+1}$,
	\begin{align}\label{eq_bupp}
		\abs{\bar{u}_{i,j}^{++}(x)} \lesssim & (l/N)^{-\frac12} e^{-\frac{1}{2}k\bar{\bar\si}} \norm{f}_{L^2(B_l)}.
	\end{align}

	By similar arguments as above we have
	\begin{align}
		\abs{\na\bar{u}_{i,j}^{++}(x)} \lesssim k(l/N)^{-\frac12} e^{-\frac{1}{2}k\bar{\bar\si}} \norm{f}_{L^2(B_l)}. \label{eq_nbupp}
	\end{align}
	Finally, \eqref{eq_omp_bupmp} follows by combining \eqref{eq_bupp}--\eqref{eq_nbupp}.
\end{proof}

\begin{lemma}\label{lem9}
	Assume that $\sigma_0d>1$ is sufficiently large, $d\eqsim l/N$ and $ kl/N\gtrsim 1$. Then there exists a constant $c_0>0$ independent of $k,l,$ and $N$ such that for $j=1,\cdots,N-1$, we have
	\begin{align*}
		\norm{\bar{u}_{i,j}^{\pm\pm}}_{H^{\frac12}(\pa\Omega_{i,j}^\pml)}\lesssim C_{k,l,N} e^{-\frac{1}{2}k\bar{\bar\si}} \norm{f}_{L^2(B_l)},
	\end{align*}
	where $C_{k,l,N} := (kl/N)^{\frac32} (c_0\mu_N kl/N)^{-(N-2)}$.
\end{lemma}

\begin{proof}
	We only sketch the proof for $\bar u_{i,j}^{++}$. From \eqref{eq_vhga12} and \cite[(3.1)]{cx}, it is obvious that
	\begin{align}
		  & \norm{\bar{u}_{i,j}^{++}}_{H^{\frac12}(\pa\Omega_{i,j}^\pml)} \label{eq_uijpml}                   \\
		\leq & \big(\abs{\pa\Omega_{i,j}^\pml}d_{\Omega_{i,j}^\pml}^{-1} \big)^{\frac12} \norm{\bar{u}_{i,j}^{++}}_{L^\infty(\pa\Omega_{i,j}^\pml)} + \abs{\pa\Omega_{i,j}^\pml} \norm{\na\bar{u}_{i,j}^{++}}_{L^\infty(\pa\Omega_{i,j}^\pml)}\notag \\
		\lesssim & \norm{\bar{u}_{i,j}^{++}}_{L^\infty(\pa\Omega_{i,j}^\pml)} + (d+l/N) \norm{\na\bar{u}_{i,j}^{++}}_{L^\infty(\pa\Omega_{i,j}^\pml)}.\notag
	\end{align}

	From \eqref{buija} and Algorithm~\ref{alg3} we have
	\eqn{\bar{u}_{i,j}^{++}&(x) = \int_{\mathbb{R}^2} (\bar f_{i,j}^{++}(y)+f_{i,j+1}^+(y)) J_{i,j} G(\tilde{x}^{i,j},\tilde{y}^{i,j}) dy \\
	=& \int_{\mathbb{R}^2} ( f_{i,j}^{+}(y)+f_{i,j+1}^+(y)) J_{i,j} G(\tilde{x}^{i,j},\tilde{y}^{i,j}) dy+\int_{\Omega^j} \Psi_{i,j}^{++}({\bar{f}}_{i,j-1}^{++}) J_{i,j} G(\tilde{x}^{i,j},\tilde{y}^{i,j}) dy
	}
	Similar to \eqref{psi1}, some simple calculations yield
	\eq{\label{Psi++}\Psi_{i,j}^{++}({\bar{f}}_{i,j-1}^{++})=&J_i^{-1}\na\cdot\left( A_i \na\ga_j^+ u_{i,j-1}^{++} \right) \\
	&+ J_i^{-1} {\na\ga_j^+}\cdot (A_i\na u_{i,j-1}^{++})-{\ga_j^+}f_{i,j}^+
	\quad\text{in }\Omega^j.\notag}
	Noting that $\bar{u}_{i,j-1}^{++}(y)$ and $\frac{\pa\bar{u}_{i,j-1}^{++}(y)}{\pa y_2}$ decay exponentially in $\Omega^j$ as $|y|\rightarrow\infty$ and that $J_{i,j}=J_i, A_{i,j}=A_i$ in $\Omega^j$, by using integration by parts, we have that, for $x\in\pa\Omega_{i,j}^\pml$
	\begin{align*}
		\bar{u}_{i,j}^{++}(x) & = \int_{\mathbb{R}^2} \big((1-\ga_j^+) f_{i,j}^+(y)+f_{i,j+1}^+(y)\big) J_{i,j} G(\tilde{x}^{i,j},\tilde{y}^{i,j}) dy       \\
		   & \quad- \int_{\Omega^j} \frac{\tilde{x}_1^i(x_1)'}{\tilde{x}_2^j(x_2)'} {\ga_j^{+}}'(y_2) \bar{u}_{i,j-1}^{++}(y) \frac{\pa G(\tilde{x}^{i,j},\tilde{y}^{i,j})}{\pa y_2} dy \\
		   & \quad+ \int_{\Omega^j} \frac{\tilde{x}_1^i(x_1)'}{\tilde{x}_2^j(x_2)'} {\ga_j^+}'(y_2) \frac{\bar{u}_{i,j-1}^{++}(y)}{\pa y_2} G(\tilde{x}^{i,j},\tilde{y}^{i,j}) dy \\
		   & := I + II + III.
	\end{align*}

	From \eqref{O+-}, it is clear that
	\begin{align*}
		|II| \lesssim (l/N)^{-1} \bigg( & \int_{\Omega_{i-}^j} \abs{\bar{u}_{i,j-1}^{++}(y)} \abs{\na_y G(\tilde{x}^{i,j},\tilde{y}^{i,j})} dy \\&+ \int_{\Omega_{i+}^j} \abs{\bar{u}_{i,j-1}^{++}(y)} \abs{\na_y G(\tilde{x}^{i,j},\tilde{y}^{i,j})} dy \bigg).
	\end{align*}
	By \eqref{buija}, the inf--sup condition \eqref{eq_misir2}, and \eqref{Psi++}, we conclude that
	\begin{align*}
		\norme{\bar u_{i,j-1}^{++}}_{\R^2} \leq & \mu_N^{-1} \norme{\bar f_{i,j-1}^{++} + f_{i,j}^{+}}_{\R^2}^*          \\
		\leq     & \mu_N^{-1} \big( \norme{\Psi_{i,j-1}^{++}({\bar f}_{i,j-2}^{++})}_{\Omega^{j-1}}^* + \norme{F_{i}^{+}}_{\Omega^{j-1}\cup\Omega^{j}}^* \big) \\
		\leq     & (c_0\mu_N kl/N)^{-1} \norme{\bar u_{i,j-2}^{++}}_{\Omega^{j-1}} + (c_0\mu_N)^{-1}\norme{\bar f_i^++f_{i+1}}_{\Omega^{j-1}\cup\Omega^{j}}^*
	\end{align*}
	where $\bar u_{i,0}^{++}=0$ and $c_0>0$ is a constant independent of $k,l,$ and $N$. Without loss of generality, we assume that $c_0\mu_N kl/N<1$. By recursion and \eqref{eq_fip}, we have
	\begin{align*}
		\norme{\bar u_{i,j-1}^{++}}_{\R^2}\leq & (c_0\mu_N)^{-1} \sum_{p=0}^{j-2} (c_0\mu_N kl/N)^{-p} \norme{\bar f_i^++f_{i+1}}_{\Omega^{j-1-p}\cup\Omega^{j-p}}^* \\			\lesssim & (c_0\mu_N)^{-1} \sum_{p=0}^{j-2} (c_0\mu_N kl/N)^{-p} (kl/N)^{-\frac12} \norm{f}_{L^2(B_l)}     \\
		\lesssim    & \mu_N^{-1} (kl/N)^{-\frac12} \sum_{p=0}^{j-2} (c_0\mu_N kl/N)^{-p} \norm{f}_{L^2(B_l)}
	\end{align*}
	Since $|x_1-y_1|\geq d/2$ or $|x_2-y_2|\geq d/2$ for $x\in\pa\Omega_{i,j}^\pml$ and $y\in \Omega_{i-}^j$, similar to \eqref{imrho}, from \eqref{eq_imagrhocond}--\eqref{eq_imagrhoxi}, we conclude that
	$$\im\rho(\tilde{x}^{i,j},\tilde{y}^{i,j})\ge\frac12 \bar{\bar\si}$$ if $ \bar{\bar\si}$ is sufficiently large. Therefore by combining Lemma~\ref{lemma_G} and the above two inequalities, we have
	\begin{align*}
		 & \int_{\Omega_{i-}^j} \abs{\bar{u}_{i,j-1}^{++}(y)} \abs{\na_y G(\tilde{x}^{i,j},\tilde{y}^{i,j})} dy
		\leq \norm{\bar{u}_{i,j-1}^{++}}_{L^2(\Omega_{i-}^j)} \cdot \bigg(\int_{\Omega_{i-}^j} \abs{\na_y G(\tilde{x}^{i,j},\tilde{y}^{i,j})}^2 dy\bigg)^{\frac12} \\
		 & \lesssim k^{-1} \mu_N^{-1} (kl/N)^{-\frac12} \sum_{p=0}^{j-2}(c_0\mu_N kl/N)^{-p} \norm{f}_{L^2(B_l)}       \\
		 & \quad \cdot (l/N) \big( k(kd/2)^{-\frac12} +k (kd/2)^{-1} \big) e^{-\frac12 k\im\rho(\tilde{x}^{i,j},\tilde{y}^{i,j})}     \\
		 & \lesssim (k\mu_N)^{-1} \sum_{p=0}^{j-2} (c_0\mu_N kl/N)^{-p} e^{-\frac12 k \bar{\bar\si}} \norm{f}_{L^2(B_l)}.
	\end{align*}
	By similar arguments to \eqref{eq_gi1}--\eqref{eq_gi2}, we obtain
	\begin{align}
		\int_{\Omega_{i+}^j} \abs{\na_y G(\tilde{x}^{i,j},\tilde{y}^{i,j})} dy \lesssim k^\frac12l/N,
	\end{align}
	which together with Lemma~\ref{lemma_omp} implies that
	\begin{align*}
		\int_{\Omega_{i+}^j} \abs{\bar{u}_{i,j-1}^{++}(y)} \abs{\na_y G(\tilde{x}^{i,j},\tilde{y}^{i,j})} dy & \leq \max_{y\in\Omega_{i+}^j} \abs{\bar{u}_{i,j-1}^{++}(y)} \int_{\Omega_{i+}^j} \abs{\na_y G(\tilde{x}^{i,j},\tilde{y}^{i,j})} dy \\
		             & \lesssim (kl/N)^{\frac12} e^{-\frac12 k \bar{\bar\si}} \norm{f}_{L^2(B_l)}.
	\end{align*}
	Therefore, we have for any $x\in\pa\Omega_{i,j}^\pml$
	\begin{align}\label{eq_pmlii}
		\abs{II} \lesssim \Big( (k\mu_N)^{-1} \sum_{p=0}^{j-2}(c_0\mu_N kl/N)^{-p} +(kl/N)^{\frac12} \Big) e^{-\frac12 k \bar{\bar\si}} \norm{f}_{L^2(B_l)}.
	\end{align}

	By similar arguments, we have
	\begin{align}
		\abs{III} \lesssim \Big( (k\mu_N)^{-1} \sum_{p=0}^{j-2} (c_0\mu_N kl/N)^{-p} +(kl/N)^{\frac12} \Big) e^{-\frac12 k \bar{\bar\si}} \norm{f}_{L^2(B_l)}.
	\end{align}

	It remains to estimate $I$. Denote by $\Omega_{i,j}=(\ze_i,\ze_{i+2})\times(\ze_j,\ze_{j+2})$. By the definition of $f_{i,j}^+$ (see the line below \eqref{pmlalg1F}), we derive in analogy to the proof of Lemma~\ref{lemma_omp} (cf. \eqref{eq_fip}), for $x\in\partial\Omega_{i,j}^\pml$
	\begin{align}
		\abs{I} \leq & \norme{ f_{i,j}^++f_{i,j+1}^+ }_{\Omega_{i,j}}^* \norme{ J_{i,j}(\cdot)G(\tilde x^{i,j},\tilde\cdot) }_{\Omega_{i,j}} \label{eq_pmli}          \\
		\leq  & \norme{ f_{i,j}^++f_{i,j+1}^+ }_{\Omega_{i,j}}^* \cdot \abs{\Omega_{i,j}}^{\frac12} \max_{y\in\Omega_{i,j}}\Big( k\abs{G(\tilde x^{i,j},\tilde y^{i,j})} + \abs{\nabla G(\tilde x^{i,j},\tilde y^{i,j})} \Big) \notag \\
		\lesssim & \norme{ f_{i,j}^++f_{i,j+1}^+ }_{\Omega_{i,j}}^* \cdot l/N e^{ -\frac12 k\im\rho(\tilde x^{i,j},\tilde y^{i,j}) }\big[ k(kd/2)^{-\frac12}+ k(kd/2)^{-1} \big] \notag       \\
		\lesssim & (kl/N)^{-\frac12}\norm{f}_{L^2(B_l)} \cdot (kl/N)^{\frac12} e^{-\frac{1}{2}k\bar{\bar\si}} \notag               \\
		\lesssim & e^{-\frac{1}{2}k\bar{\bar\si}} \norm{f}_{L^2(B_l)}. \notag
	\end{align}

	By combining \eqref{eq_pmlii}--\eqref{eq_pmli}, we obtain that, for $x\in\partial\Omega_{i,j}^\pml$,
	\begin{align}\label{eq_uijabs}
		\abs{\bar{u}_{i,j}^{++}(x)} \lesssim & \Big( (k\mu_N)^{-1} \sum_{p=0}^{j-2} (c_0\mu_N kl/N)^{-p} +(kl/N)^{\frac12} \Big) e^{-\frac{1}{2}k\bar{\bar\si}} \norm{f}_{L^2(B_l)} \\
		\lesssim    & (kl/N)^{\frac12} (c_0\mu_N kl/N)^{-j+1} e^{-\frac{1}{2}k\bar{\bar\si}} \norm{f}_{L^2(B_l)}.\notag
	\end{align}
	where we have used $ (k\mu_N)^{-1} \lesssim k^{-1}(kl/N)^\frac32\lesssim (kl/N)^\frac12$ to derive the last inequality (see Lemma~\ref{lem6e} and Remark 2.1 (i)).

	By similar arguments, we have for $x\in\partial\Omega_{i,j}^\pml$,
	\begin{align}\label{eq_nauijabs}
		\abs{\nabla\bar{u}_{i,j}^{++}(x)} \lesssim k (kl/N)^{\frac12} (c_0\mu_N kl/N)^{-j+1} e^{-\frac{1}{2}k\bar{\bar\si}} \norm{f}_{L^2(B_l)}.
	\end{align}

	Finally, we complete the proof by combining \eqref{eq_uijpml}, \eqref{eq_uijabs}--\eqref{eq_nauijabs}.
\end{proof}

\begin{lemma}\label{lem7}
	Let $\sigma_0>1$ be sufficiently large. Assume that $d\eqsim l/N$ and $kl/N\gtrsim 1$. There exists a constant $\tilde C$ independent of $f$, $k$, and $N$ such that for $i=1,2,\cdots,N-1$, we have
	\begin{align}\label{eq7a}
		\norme{\check{u}_i^\pm-\bar{u}_i^\pm}_{\Omega_i^{\pml}} \lesssim \tilde C_{k,l,N} e^{-\frac{1}{2}k\bar{\bar\si}} \norm{f}_{L^2(B_l)},
	\end{align}
	where $\tilde C_{k,l,N}= (kl/N)^\frac72 ( \tilde C \mu_Nkl/N)^{-(N^2+2N-2)}$.
\end{lemma}

\begin{proof}
	We only prove \eqref{eq7a} for $\norm{\check u_i^+-\bar u_i^+}_{H^1(\Omega_i^\pml)}$. Noting from \eqref{barui} that $\bar{u}_i^{\pm}=u_i^{\pm}$ in $\Omega_i\cup\Omega_{i+1}$, from \eqref{pmlalg1}--\eqref{pmlalg1F}, Algorithms~\ref{alg1} and \ref{alg5}, and some simple calculations, we conclude that (cf. \eqref{psi1})
	\begin{align*}
		\Psi_i^+ & = J^{-1}\nabla\cdot (A_{i-1}\nabla \beta_i^+\bar u_{i-1}^+)+J^{-1}\nabla \beta_i^+\cdot (A_{i-1}\nabla\bar u_{i-1}^+)-\beta_i^+ f_i\quad \text{in } \Omega_i, \\
		\check\Psi_i^+ & = J^{-1}\nabla\cdot (A_{i-1}\nabla \beta_i^+\check u_{i-1}^+)+J^{-1}\nabla \beta_i^+\cdot (A_{i-1}\nabla\check u_{i-1}^+)-\beta_i^+ f_i\quad \text{in } \Omega_i,
	\end{align*}
	which implies that
	\begin{align*}
		\norme{F_{i}^+-\check{F}_{i}^+}_{\Omega_{i,j}^\mathrm{PML}}^* \lesssim \big(kl/N\big)^{-1} \norme{\bar{u}_{i-1}^+-\check{u}_{i-1}^+}_{\Omega_i\cap\Omega_{i,j}^\pml},\quad i=1,\cdots,N-1,
	\end{align*}
	where $\bar{u}_0^+-\check{u}_0^+=0$.
	Similarly, from Algorithm \ref{alg4}, we have,
	\eq{\label{hPsi++}\hat\Psi_{i,j}^{++}({\hat{f}}_{i,j-1}^{++})=&J_i^{-1}\na\cdot\left( A_i \na\ga_j^+ \hat u_{i,j-1}^{++} \right) \\
	&+ J_i^{-1} {\na\ga_j^+}\cdot (A_i\na \hat u_{i,j-1}^{++})-{\ga_j^+}\check f_{i,j}^+
	\quad\text{in }\Omega_{i,j}^{\tru},\notag}
	which together with \eqref{Psi++} implies that, for $i,j=1,\cdots,N-1,$
	\begin{align*}
		\norme{\bar{f}_{i,j}^{++}-\hat{f}_{i,j}^{++}}_{\Omega_{i,j}^\pml}^* & \lesssim \big(kl/N\big)^{-1} \norme{\bar{u}_{i,j-1}^{++}-\hat{u}_{i,j-1}^{++}}_{\Omega_{i,j-1}^\pml} +\norme{F_{i}^+-\check{F}_{i}^+}_{\Omega_{i,j}^\mathrm{PML}}^*  \\
		\lesssim        & \big(kl/N\big)^{-1} \Big(\norme{\bar{u}_{i,j-1}^{++}-\hat{u}_{i,j-1}^{++}}_{\Omega_{i,j-1}^\pml} +\norme{\bar{u}_{i-1}^+-\check{u}_{i-1}^+}_{\Omega_i\cap\Omega_{i,j}^\pml}\Big),
	\end{align*}
	where $\bar{u}_{i,0}^{++}-\hat{u}_{i,0}^{++}=0$.

	In the following arguments, we denote by $\tilde C$ the generic constant independent of $k,l/N,N$ and $\mu_N$. By combining the local inf--sup condition~\ref{lem6e}, the inverse trace inequality, the scaling argument, and Lemma~\ref{lem9}, we obtain
	\begin{align}
		 & \norme{\bar{u}_{i,j}^{++}-\hat{u}_{i,j}^{++}}_{\Omega_{i,j}^\pml} \lesssim \mu_N^{-1} \norme{(\bar{f}_{i,j}^{++}+{f}_{i,j+1}^+)-(\hat{f}_{i,j}^{++}+\check{f}_{i,j+1}^+)}_{\Omega_{i,j}^\pml}^* \label{eq7.1} \\
		 & \quad + (1+\mu_N^{-1}) (1+kl/N) \norm{\bar{u}_{i,j}^{++}}_{H^{\frac12}(\pa\Omega_{i,j}^\pml)} \notag              \\
		 & \lesssim \mu_N^{-1}(kl/N)^{-1}\Big( \norme{\bar{u}_{i,j-1}^{++}-\hat{u}_{i,j-1}^{++}}_{\Omega_{i,j-1}^\pml}+\norme{\bar{u}_{i-1}^+-\check{u}_{i-1}^+}_{\Omega_i\cap\Omega_{i,j}^\pml} \Big) \notag  \\
		 & \quad+ \mu_N^{-1}\norme{F_{i}^+-\check{F}_{i}^+}_{\Omega_{i,j}^\pml}^* + C_{k,l,N}\mu_N^{-1}kl/N e^{-\frac{1}{2}k\bar{\bar\si}} \norm{f}_{L^2(B_l)} \notag       \\
		 & \leq ( \tilde C \mu_Nkl/N)^{-1} \norme{\bar{u}_{i,j-1}^{++}-\hat{u}_{i,j-1}^{++}}_{\Omega_{i,j-1}^\pml} +( \tilde C \mu_Nkl/N)^{-1} \notag         \\
		 & \quad \cdot \norme{\bar{u}_{i-1}^+-\check{u}_{i-1}^+}_{\Omega_i\cap\Omega_{i,j}^\pml} + C_{k,l,N}(\tilde C\mu_N)^{-1}kl/N e^{-\frac{1}{2}k\bar{\bar\si}} \norm{f}_{L^2(B_l)},\notag
	\end{align}
	for $i,j=1,\cdots,N-1$ if $\sigma_0$ is sufficiently large. By induction,
	\begin{align}\label{eq7.2}
		 & \norme{\bar{u}_{i,j}^{++}-\hat{u}_{i,j}^{++}}_{\Omega_{i,j}^\pml} \leq \sum_{p=1}^j ( \tilde C \mu_Nkl/N)^{-p} \cdot \norme{\bar{u}_{i-1}^+-\check{u}_{i-1}^+}_{\Omega_i\cap B_L} \\
		 & \qquad + C_{k,l,N}(\tilde C\mu_N)^{-1}kl/N \sum_{p=0}^{j-1}( \tilde C \mu_Nkl/N)^{-p} \cdot e^{-\frac{1}{2}k\bar{\bar\si}} \norm{f}_{L^2(B_l)}\notag    \\
		 & \quad\leq ( \tilde C \mu_Nkl/N)^{-(j+1)}\cdot \norme{\bar{u}_{i-1}^+-\check{u}_{i-1}^+}_{\Omega_i\cap B_L} \notag         \\
		 & \qquad + C_{k,l,N}(kl/N)^2 ( \tilde C \mu_Nkl/N)^{-(j+1)} \cdot e^{-\frac{1}{2}k\bar{\bar\si}} \norm{f}_{L^2(B_l)}. \notag
	\end{align}

	Similarly, we have for $j=N-1,\cdots,1$,
	\begin{align}
		 & \norme{\bar{u}_{i,j}^{+-}-\hat{u}_{i,j}^{+-}}_{\Omega_{i,j}^\pml} \leq \sum_{p=1}^{N-j} (\tilde C \mu_Nkl/N)^{-p} \cdot \norme{\bar{u}_{i-1}^+-\check{u}_{i-1}^+}_{\Omega_i\cap B_L} \label{eq7.3} \\
		 & \qquad + C_{k,l,N}(\tilde C\mu_N)^{-1}kl/N \sum_{p=0}^{N-j-1} (\tilde C \mu_Nkl/N)^{-p} \cdot e^{-\frac{1}{2}k\bar{\bar\si}} \norm{f}_{L^2(B_l)} \notag      \\
		 & \quad \leq ( \tilde C \mu_Nkl/N)^{-(N-j+1)}\cdot \norme{\bar{u}_{i-1}^+-\check{u}_{i-1}^+}_{\Omega_i\cap B_L} \notag          \\
		 & \qquad + C_{k,l,N} (kl/N)^2 ( \tilde C \mu_Nkl/N)^{-(N-j+1)} \cdot e^{-\frac{1}{2}k\bar{\bar\si}} \norm{f}_{L^2(B_l)}. \notag
	\end{align}

	Combining \eqref{eq7.2}, \eqref{eq7.3}, Algorithm~\ref{alg4}, Lemma~\ref{lem8}, and the definition of $C_{k,l,N}$ in Lemma~\ref{lem9}, we conclude that
	\begin{align}
		 & \norme{\bar{u}_{i}^+-\check{u}_{i}^+}_{\Omega_{i}^\pml} \leq \sum_{p=1}^{N-1} ( \tilde C \mu_Nkl/N)^{-(p+1)} \cdot \norme{\bar{u}_{i-1}^+-\check{u}_{i-1}^+}_{\Omega_{i-1}^\mathrm{PML}} \\
		 & \qquad+ C_{k,l,N} (kl/N)^2 \sum_{p=1}^{N-1} ( \tilde C \mu_Nkl/N)^{-(p+1)} \cdot e^{-\frac{1}{2}k\bar{\bar\si}} \norm{f}_{L^2(B_l)} \notag      \\
		 & \leq ( \tilde C \mu_Nkl/N)^{-(N+1)} \cdot \norme{\bar{u}_{i-1}^+-\check{u}_{i-1}^+}_{\Omega_{i-1}^\mathrm{PML}} \notag         \\
		 & \qquad+ C_{k,l,N} (kl/N)^2 ( \tilde C \mu_Nkl/N)^{-(N+1)} \cdot e^{-\frac{1}{2}k\bar{\bar\si}} \norm{f}_{L^2(B_l)}. \notag
	\end{align}
	Then the proof of the lemma is completed by induction.
\end{proof}

The following theorem is a direct consequence of Theorem~\ref{th3}, Lemma~\ref{lem7} and the fact that $u_i^\pm=\bar{u}_i^\pm$ in $\Omega_i\cup\Omega_{i+1}$.
\begin{theorem} \label{th_final}
	Let $\sigma_0>1$ be sufficiently large. Assume that $d\eqsim l/N$ and $kl/N\gtrsim 1$. Let $\check{u}_0^+=\check{u}_{N+1}^-=0$ in $B_L$ and $\check{u}(x)=-(\check{u}_{i-1}^++\check{u}_{i+1}^-)$ in $\Omega_i\cap B_L$ for all $i=1,\cdots,N$. Then we have
	\begin{align}\label{eqend1}
		\norme{\check{u}-\til{u}}_{B_L} & \lesssim \tilde C_{k,l,N} e^{-\frac{1}{2}k\bar{\bar\si}} \norm{f}_{L^2(B_l)},
	\end{align}
	where $\tilde C_{k,l,N}= (kl/N)^\frac72 ( \tilde C \mu_Nkl/N)^{-(N^2+2N-2)}$ and $\tilde C$ is independent of $f$, $k$, and $N$.
\end{theorem}

{\it Remark 3.1.} Let us take a close look at the estimate \eqref{eqend1} when $\al=0$ in Lemmas~\ref{lem6e} and \ref{Linfsup} (see Remark 2.1 (i)). The constant ${\tilde C}_{k,l,N}$ may be bounded as follows:
\eqn{ \tilde C_{k,l,N} \lesssim (kl/N)^\frac72 C(\si_0)^{N^2+2N-2} }
for some constant $C(\si_0)>1$, that is independent of $k,l,$ and $N$ but may polynomially depend on $\sigma_0$. Noting that $\bar{\bar\si}\eqsim\si_0d\eqsim\si_0l/N$, \eqref{eqend1} implies that
\eq{\label{eq_cklna0}
\norme{\check{u}-\til{u}}_{B_L} & \lesssim (c_2\si_0)^{c_3N^2}e^{-c_4kl/N\si_0} e^{-\frac{1}{4}k\bar{\bar\si}}\norm{f}_{L^2(B_l)}\notag\\
& \lesssim \bigg(\frac{c_2c_3N^2}{ec_4kl/N}\bigg)^{c_3N^2} e^{-\frac{1}{4}k\bar{\bar\si}}\norm{f}_{L^2(B_l)}
}
where $c_2, c_3$, $c_4>0$, and the invisible constant are independent of $k, l, N$, and $\sigma_0$. \eqref{eq_cklna0} implies that the PSTDDMb solution $\check{u}$ is a good approximation of the PML solution $\tilde u$ in $B_L$ (or the original scattering solution $u$ in $B_l$) if $k\si_0d\gg N^2\ln\big(\frac{c_2c_3N^2}{ec_4kl/N}\big)$.

\section{Numerical examples} \label{sec5} Noting from Theorems~\ref{th4} and \ref{th_final} (see Remark 3.1) that Algorithms~\ref{alg2} and \ref{alg5} produce good approximations to the original truncated PML solution in \eqref{eq_tpml} when $k\si_0d$ is sufficiently large, the PSTDDM and PSTDDMb can be used either as direct solvers or as preconditioners in the preconditioned GMRES method for solving the original truncated PML problem \eqref{eq_tpml}. In this section we present numerical experiments to verify the behavior of our PSTDDM as both direct solvers and preconditioners. The computations are all carried out in MATLAB with Intel(R) Xeon(R) CPU 2.5GH and 128GB memory.

We simply define the medium property by $\sigma_j(t)=\frac{\sigma_0}{d_j^2}(t-l_j)^2$ for $t\ge l_j$ and take $\sigma_0$ such that the exponential decaying factor $e^{-\frac12k\int_l^{l+d}\sigma_j(t)dt} \leq 10^{-3}$ in the following examples for simplicity.

The functions $\be_{i}^\pm(x_1),\ x_1\in\Omega_i,\ i=2,\cdots,N-1$, used in the source transfer algorithm are defined as
\begin{equation*}
	{\be_i^+}(x_1)=\begin{cases}
		1,  & \ze_i\leq x_1<\ze_i+\ze_i+\De\ze/4, \\
		\eta_i(x_1), & \ze_i+\De\ze/4\leq x_1< \ze_i+3\De\ze/4, \\
		0,  & \ze_i+3\De\ze/4\leq x_1\leq \ze_{i+1},
	\end{cases}
\end{equation*}
and ${\be_i^-}=1-{\be_i^+}$, where
\begin{align*}
	\eta_i(x_1)=1+\left(\frac{x_1-(\ze_i+\De\ze/4)}{\De\ze/2}\right)^4-2\left(\frac{x_1-(\ze_i+\De\ze/4)}{\De\ze/2}\right)^2.
\end{align*}
Clearly, $\be_{i}^{\pm}(x_1),\ i=2,\cdots,N-1$, are in $C^1(\Omega_i)$ and this fact avoids the discontinuity of ${\be_{i}^{\pm}}(x_1)'$ which may make $\hat{f}_{i}^\pm$ oscillate heavily.

Algorithms~\ref{alg2} and \ref{alg5} are discretized by using the finite element methods (FEM) as follows. We solve the truncated PML problems in Algorithms~\ref{alg2} and \ref{alg5} by the bilinear FEM on the Cartesian mesh consisting of small squares with side length $h$. The discrete source transfer operators are computed by replacing the truncated PML solutions with their corresponding FE approximations. We denote by $u_h$ the numerical solution obtained by the FE discretization of our PSTDDM Algorithm~\ref{alg2} or PSTDDMb Algorithm~\ref{alg5}. The numerical solution obtained by the preconditioned GMRES method using the discrete PSTDDM or PSTDDMb as a preconditioner is also denoted by $u_h$.

Let $u_f$ be the bilinear FEM solution to the original truncated PML problem~\eqref{eq_tpml} and let $u_I$ the bilinear interpolation of $u$ on the Cartesian mesh. We denote the relative errors of these numerical approximations in $H^1$-seminorms by
\begin{align*}
	e_h = & \frac{|u-u_h|_{H^1(B_l)}}{|u|_{H^1(B_l)}},\quad e_f = \frac{|u-u_f|_{H^1(B_l)}}{|u|_{H^1(B_l)}},\quad e_I = \frac{|u-u_I|_{H^1(B_l)}}{|u|_{H^1(B_l)}}.
\end{align*}
In order to compare the discrete PSTDDM(b) solution $u_h$ and the FE solution $u_f$, we denote the ratio of the error between $u_h$ and $u_f$ to the error of the FE solution by
\begin{align*}
	e_{hf} = \frac{|u_h-u_f|_{H^1(B_l)}}{|u-u_f|_{H^1(B_l)}}.
\end{align*}

\textbf{Example 4.1.} We solve the problem~\eqref{eq_tpml} for constant wave number with $f$ is given so that the exact solution is
\begin{equation*}
	u=\left\{ \begin{aligned}
		 & -r^3(r^3+3r^2-12r+9) H_0^{(1)}(kr), & r\le1, \\
		 & -H_0^{(1)}(kr),   & r>1.
	\end{aligned} \right.
\end{equation*}
Clearly, $u\in C^2(\R^2)$ and $\supp f\subseteq \set{x: |x|\le 1}$. We set $l_1=l_2=l=2$.

We first test the PSTDDM (Algorithm~\ref{alg2}). We set $d_1=0.2$ and $d_2=0.4$. The left graph of Figure~\ref{fig_errors_STDDM_layers} plots the relative errors of the FE solutions, the interpolations, and the discrete PSTDDM solutions for $k=12\pi, 50\pi, 100\pi$ and $N=10$. It is shown that the accuracies of the FE solution $u_f$ and the discrete PSTDDM solution $u_h$ are almost the same in all cases. On the finest mesh consisting of 34591233 nodal points, the backslash solver of MATLAB encounter the out-of-memory error due to the large number of degrees of freedom. Note that, for each wave number $k$, the FEM error starts to decay at a mesh size smaller than that for the interpolation, and that the gap increases as $k$ increases. Such a phenomenon is the so-called pollution effect \cite{bips95,bs00,w,zw,dw,lw19}. We also compare the discrete PSTDDM solutions with FE solutions in the right graph, which shows that the discrete PSTDDM solutions are indeed good approximations to the corresponding FE solutions.

Next we test performance of the PSTDDM (Algorithm~\ref{alg2}) as a preconditioner in the preconditioned GMRES method for solving the linear system from the finite element discretization of the original truncated PML problem \eqref{eq_tpml}. We set the relative residue tolerance in the GMRES solver to be $10^{-8}$. Figure~\ref{fig_errors_STDDM_gmres_layers} plots the number of preconditioned GMRES iterations (right) and the relative errors (left) of the FE solutions, the interpolations, and the preconditioned GMRES solutions for $k=12\pi, 50\pi, 100\pi$ and $N=10$. It is shown that the number of preconditioned GMRES iterations is bounded uniformly with respect to the wave number and the mesh size (even decreases as $h$ decreases) and that preconditioned GMRES method produces good approximations to the corresponding FE solutions.

To investigate the behavior of the PSTDDM as the number of layers increases, we solve the problem for the fixed mesh size $h=2l/6000$ and $k=50\pi,100\pi$. The left graph of Figure~\ref{fig_errors_NoLayers} plots the relative errors of PSTDDM solutions solved by the PSTDDM as a direct solver versus $\pi N/(lk)$, the reciprocal of the number of wavelength $2\pi/k$ per layer of width $2l/N$. It's shown that the relative errors almost remain unchanged as $N$ increases when $\pi N/(lk)\leq 1$, which behaves better than the theoretical estimate in Theorem~\ref{th_final}. We also use the PSTDDM as a preconditioner to solve the problem, and plot the number of preconditioned GMRES iterations versus $\pi N/(lk)$ in the right graph of Figure~\ref{fig_errors_NoLayers}. Although the number of iterations grows as the number of layers increases, it is small ($\le 5$) even for $N$ so large that the width of the layers equals to the wavelength.

\begin{figure}[htbp]
	\centering
	\includegraphics[width=0.45\textwidth]{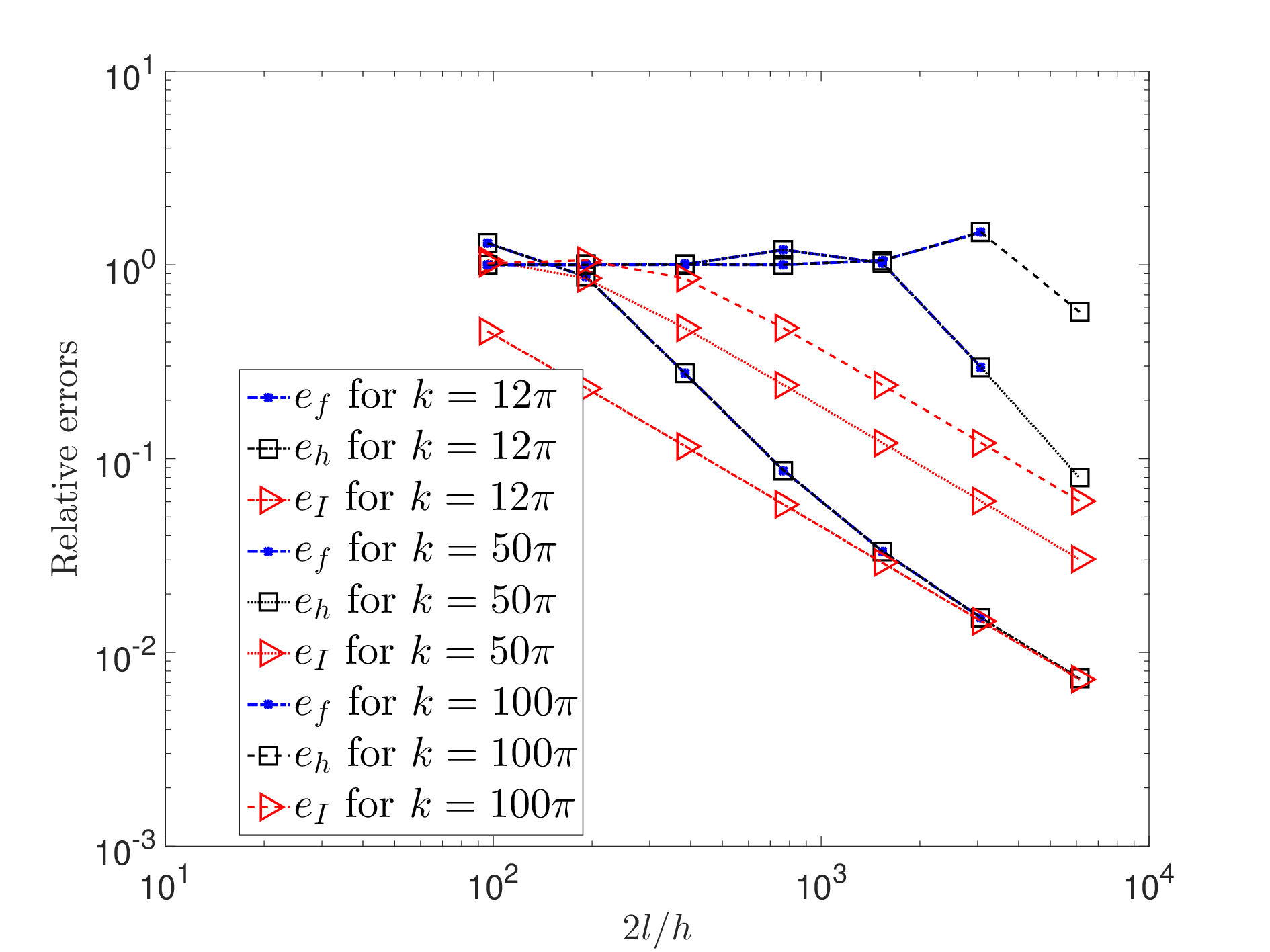}
	\includegraphics[width=0.45\textwidth]{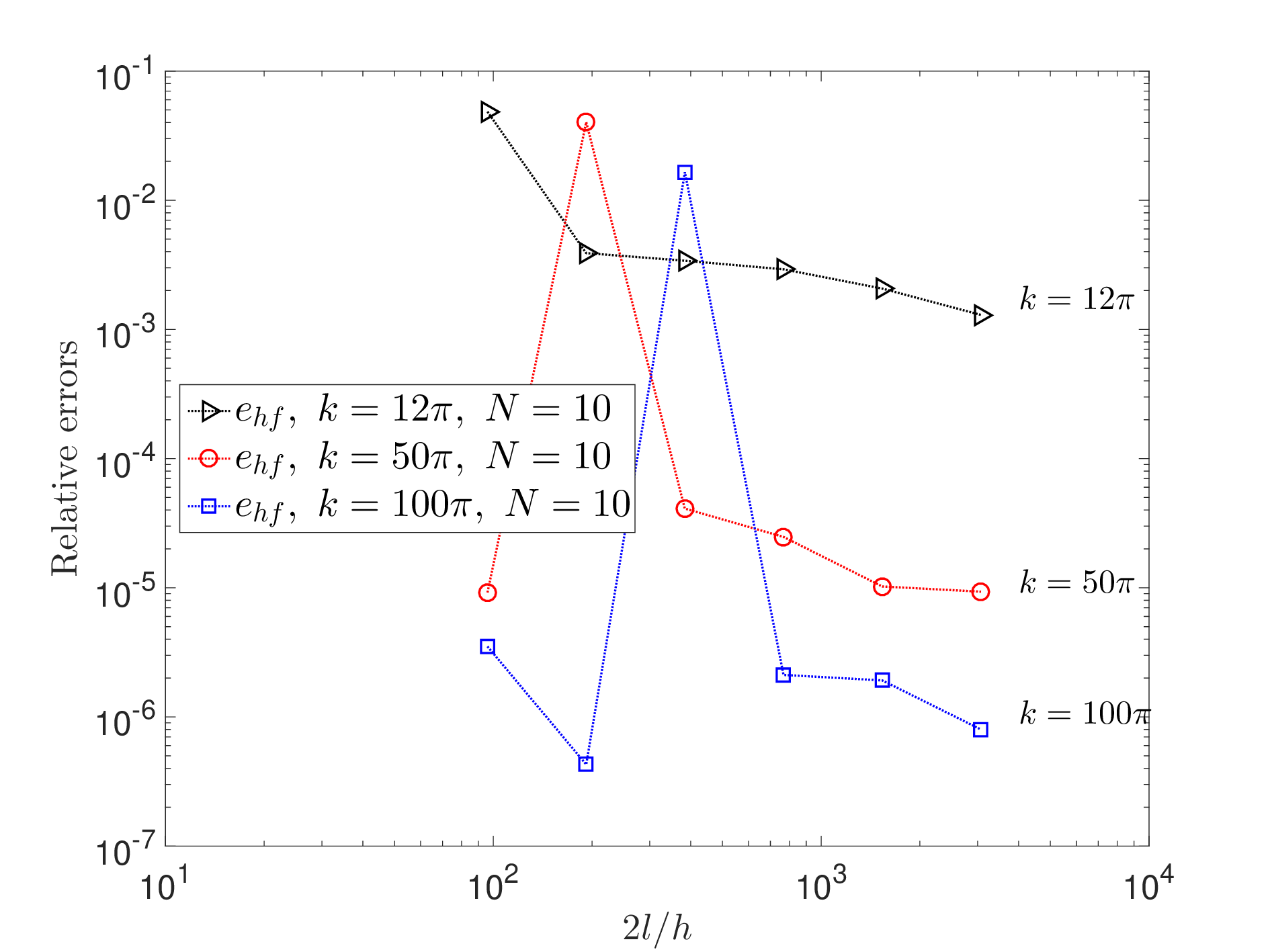}
	\caption{Example~1. Left: Log-log plot of the relative errors $e_f, e_h,$ and $e_I$ versus $2l/h$ for $N=10$ and $k=12\pi, 50\pi, 100\pi$. Right: Log-log plot of $e_{hf}$ versus $2l/h$.}
	\label{fig_errors_STDDM_layers}
\end{figure}

\begin{figure}[htbp]
	\centering
	\includegraphics[width=0.45\textwidth]{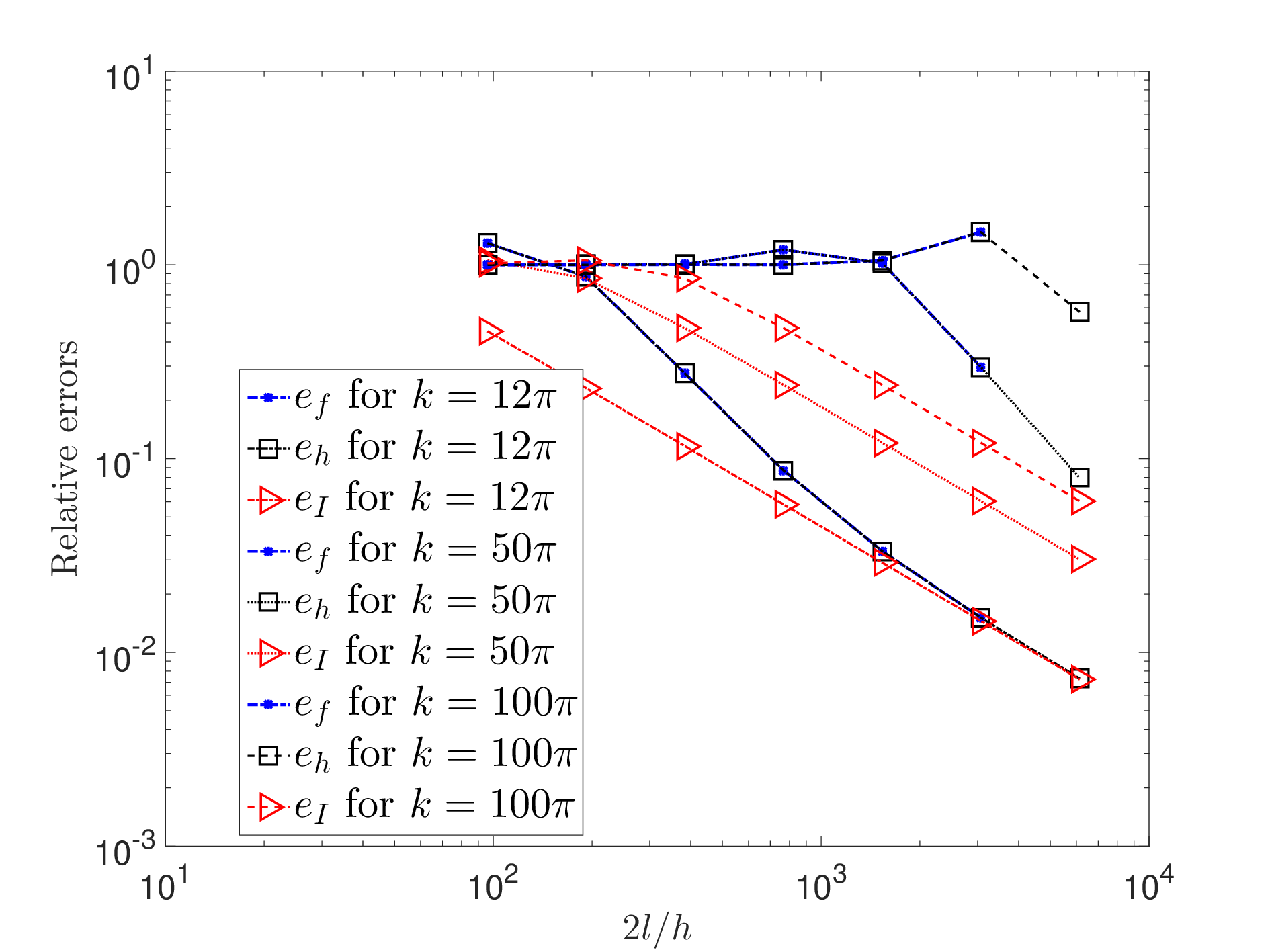}
	\includegraphics[width=0.45\textwidth]{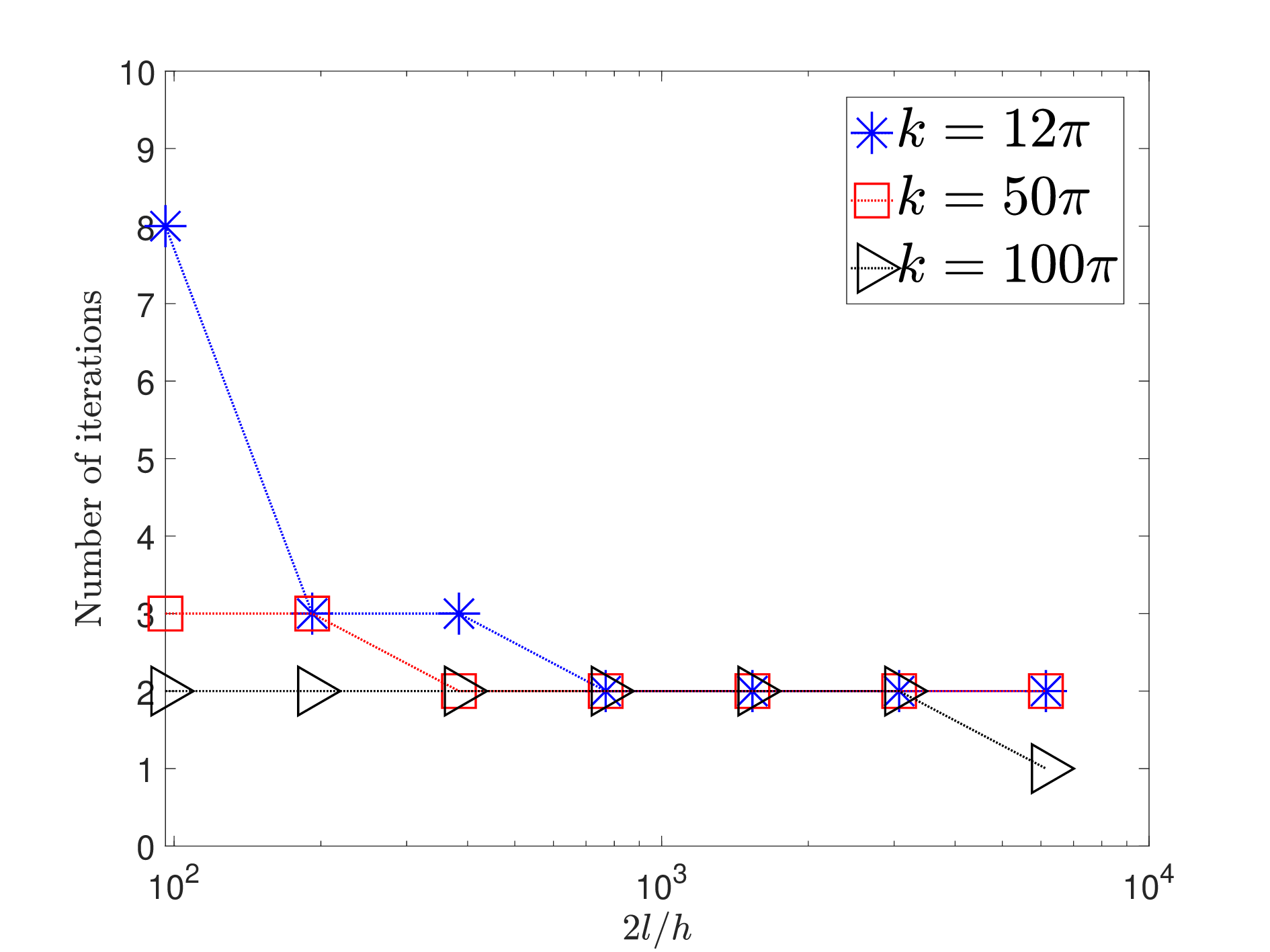}
	\caption{Example~1. Left: Log-log plot of the relative errors $e_f, e_h$(for the preconditioned GMRES solutions) and $e_I$ versus $2l/h$ for $N=10$ and $k=12\pi, 50\pi, 100\pi$. Right: the number of iterations of the preconditioned GMRES algorithm using PSTDDM as the preconditioner.}\label{fig_errors_STDDM_gmres_layers}
\end{figure}

\begin{figure}[htbp]
	\centering
	\includegraphics[width=0.45\textwidth]{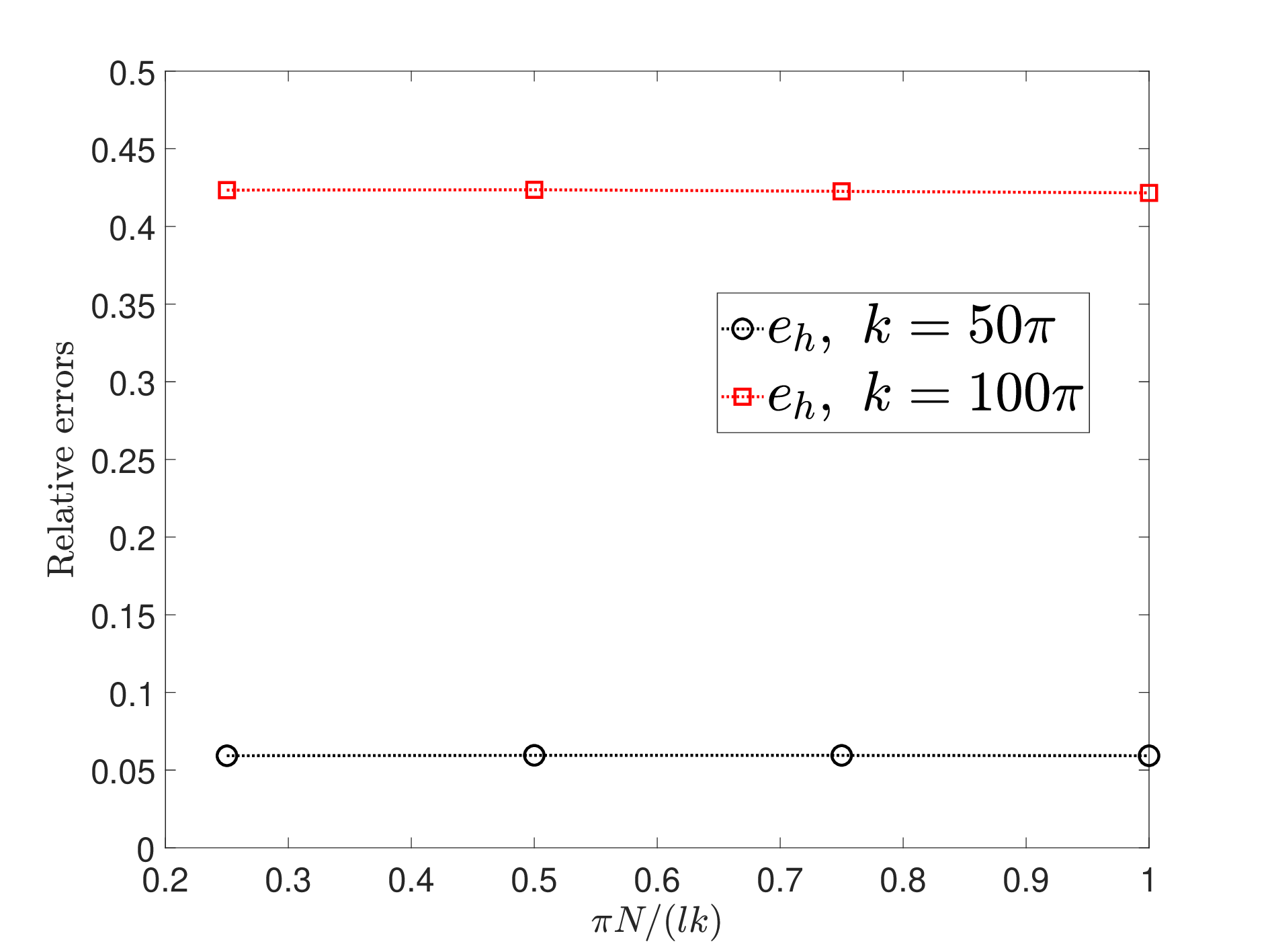}
	\includegraphics[width=0.45\textwidth]{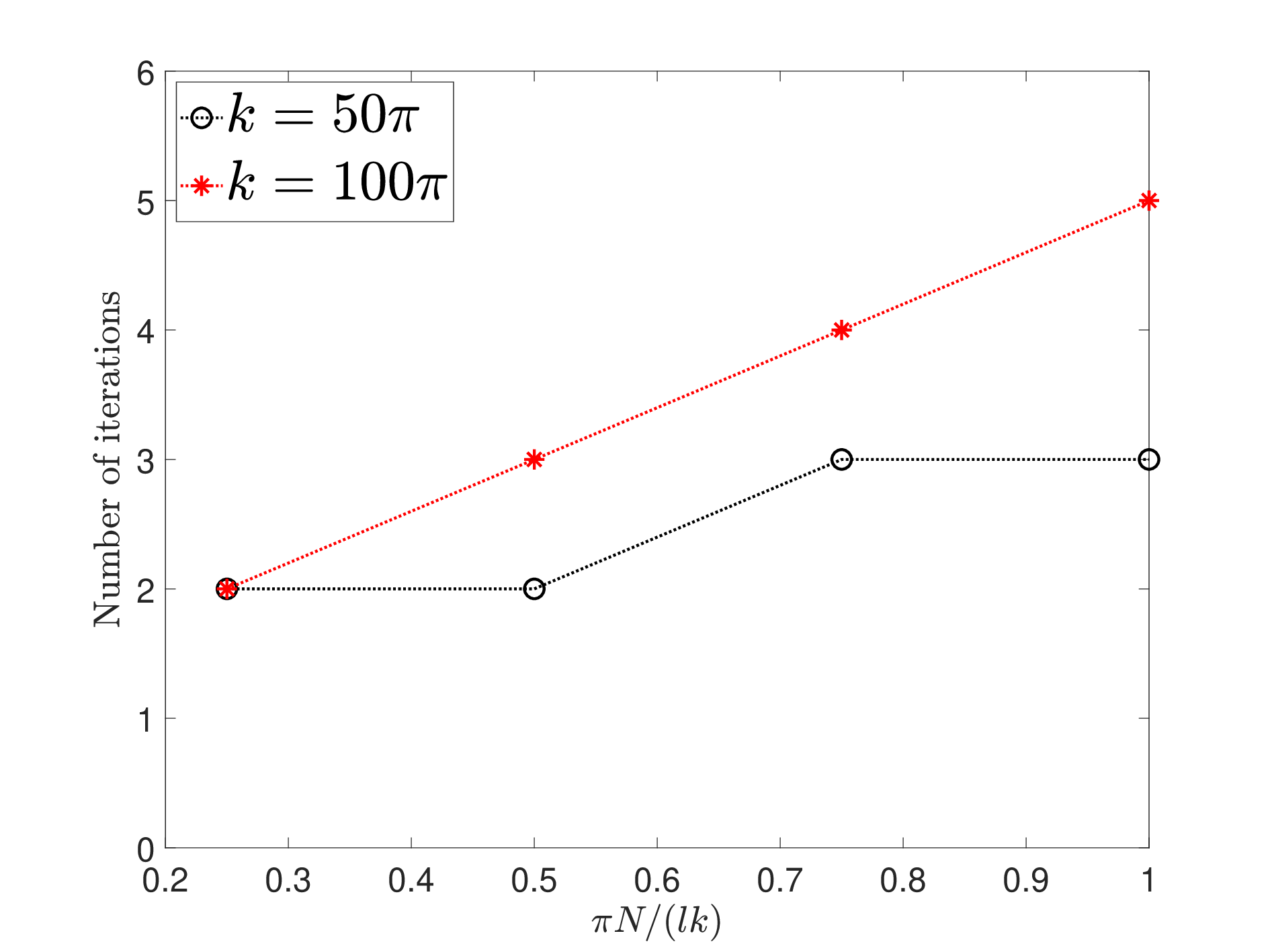}
	\caption{Example~1. Left: the relative errors $e_h$ of the solutions solved by the PSTDDM (see Algorithm~\ref{alg2}) as the direct solver for fixed $h=2l/6000$. Right: the number of iterations of the preconditioned GMRES algorithm using PSTDDM as the preconditioner for fixed $h=2l/6000$.}
	\label{fig_errors_NoLayers}
\end{figure}

Then we investigate the behavior of the PSTDDMb (see Algorithm~\ref{alg5}). We set $d_1=d_2=l/N$ and the other parameters about PML layers are still those provided at the beginning of this section.

In Figure~\ref{fig_errors_STDDM_squares}, we plot the relative errors of numrical solutions and compare the finite element solution and the PSTDDMb solution for $N=10$. We also use the PSTDDMb as the preconditioner in the preconditioned GMRES method where the relative residue tolerance is also $10^{-8}$. Figure~\ref{fig_errors_STDDM_gmres_suqares} plots the relative errors of the numerical solutions in the left graph and the number of iterations of the preconditioned GMRES for $k=12\pi,50\pi,100\pi$ in the right graph.

Finally, we plot in Figure~\ref{fig_errors_Nosquares} the relative errors of PSTDDMb solutions (left) and the number of iterations of the preconditioned GMRES method (right) versus $\pi N/(lk)$ for $k=50\pi,100\pi$ and fixed $h=2l/6000$. Again, the relative error of the PSTDDMb solution is independent of the number of layers while the number of iterations of the preconditioned GMRES method weakly depends on it.

\begin{figure}[htbp]
	\centering
	\includegraphics[width=0.45\textwidth]{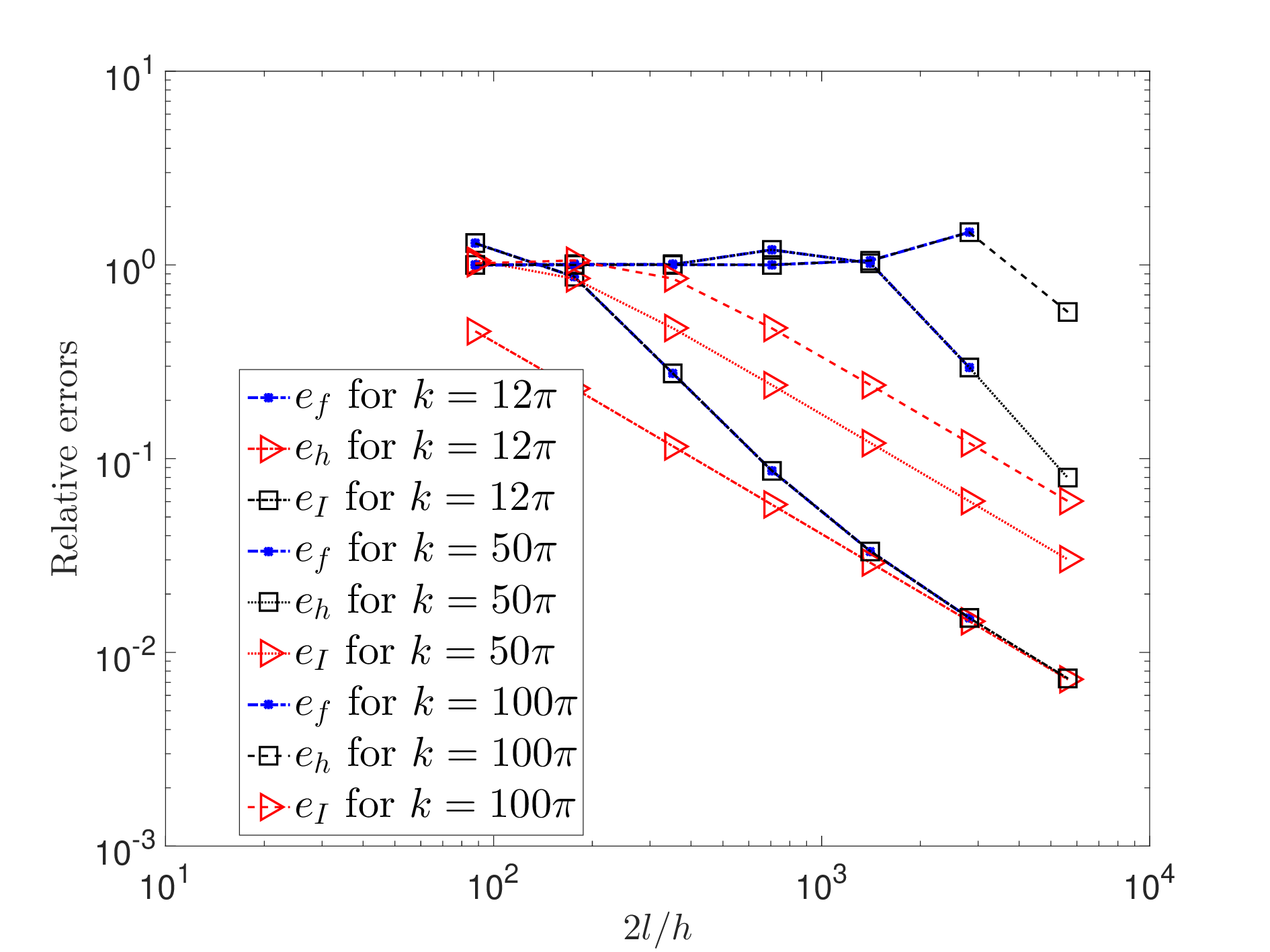}
	\includegraphics[width=0.45\textwidth]{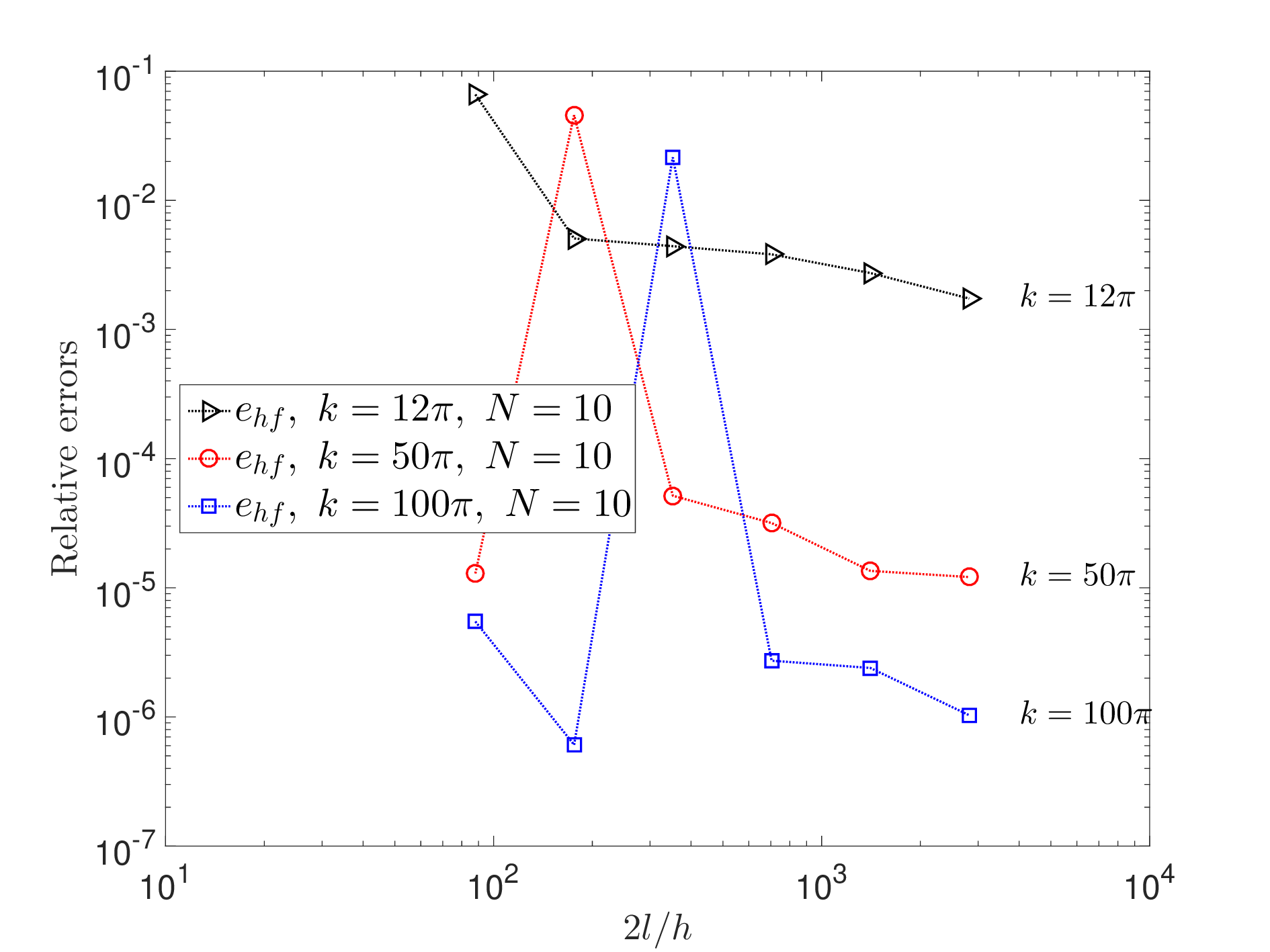}
	\caption{Example~1. The relative errors of the interpolations and the numerical solutions obtained by the PSTDDMb (see Algorithm~\ref{alg5}) as a direct solver with $N=10$.}
	\label{fig_errors_STDDM_squares}
\end{figure}

\begin{figure}[htbp]
	\centering
	\includegraphics[width=0.45\textwidth]{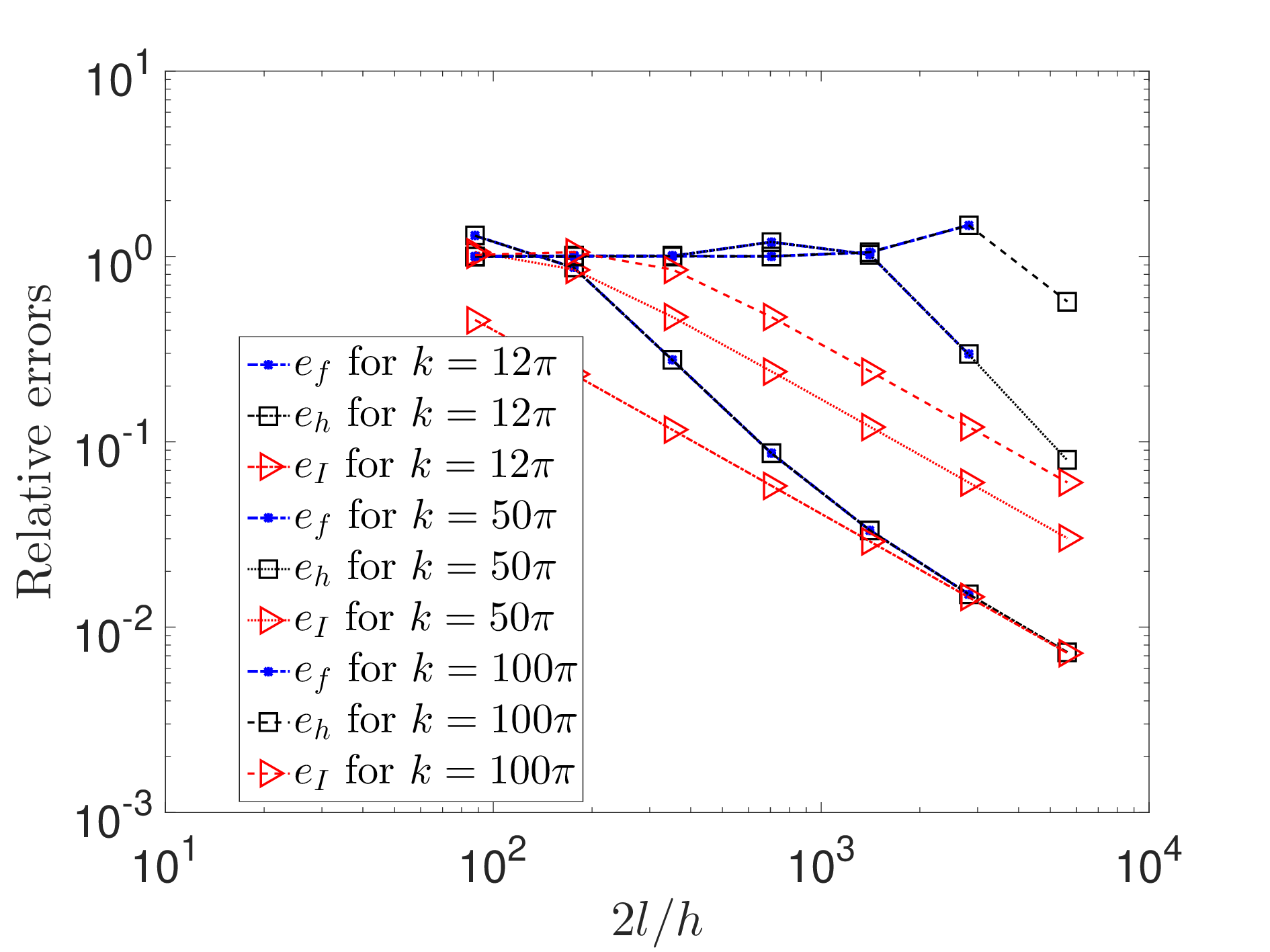}
	\includegraphics[width=0.45\textwidth]{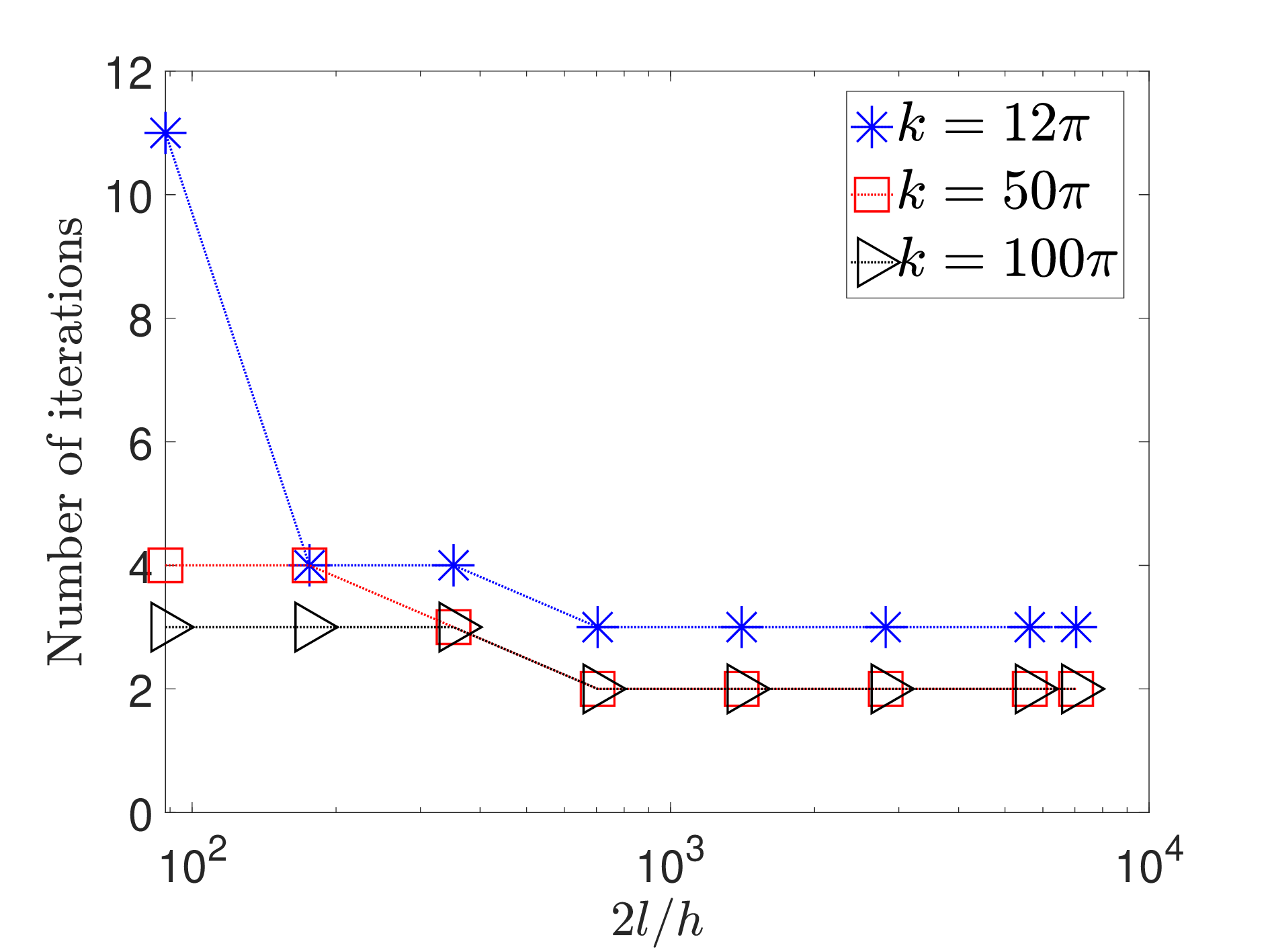}
	\caption{Example~1. The relative errors of the interpolations and the numerical solutions solved by the PSTDDMb (see Algorithm~\ref{alg5}) as a preconditioner with $N=10$ (left graph) and the number of iterations of the preconditioned GMRES algorithm using PSTDDMb as the preconditioner (right graph).}
	\label{fig_errors_STDDM_gmres_suqares}
\end{figure}

\begin{figure}[htbp]
	\centering
	\includegraphics[width=0.45\textwidth]{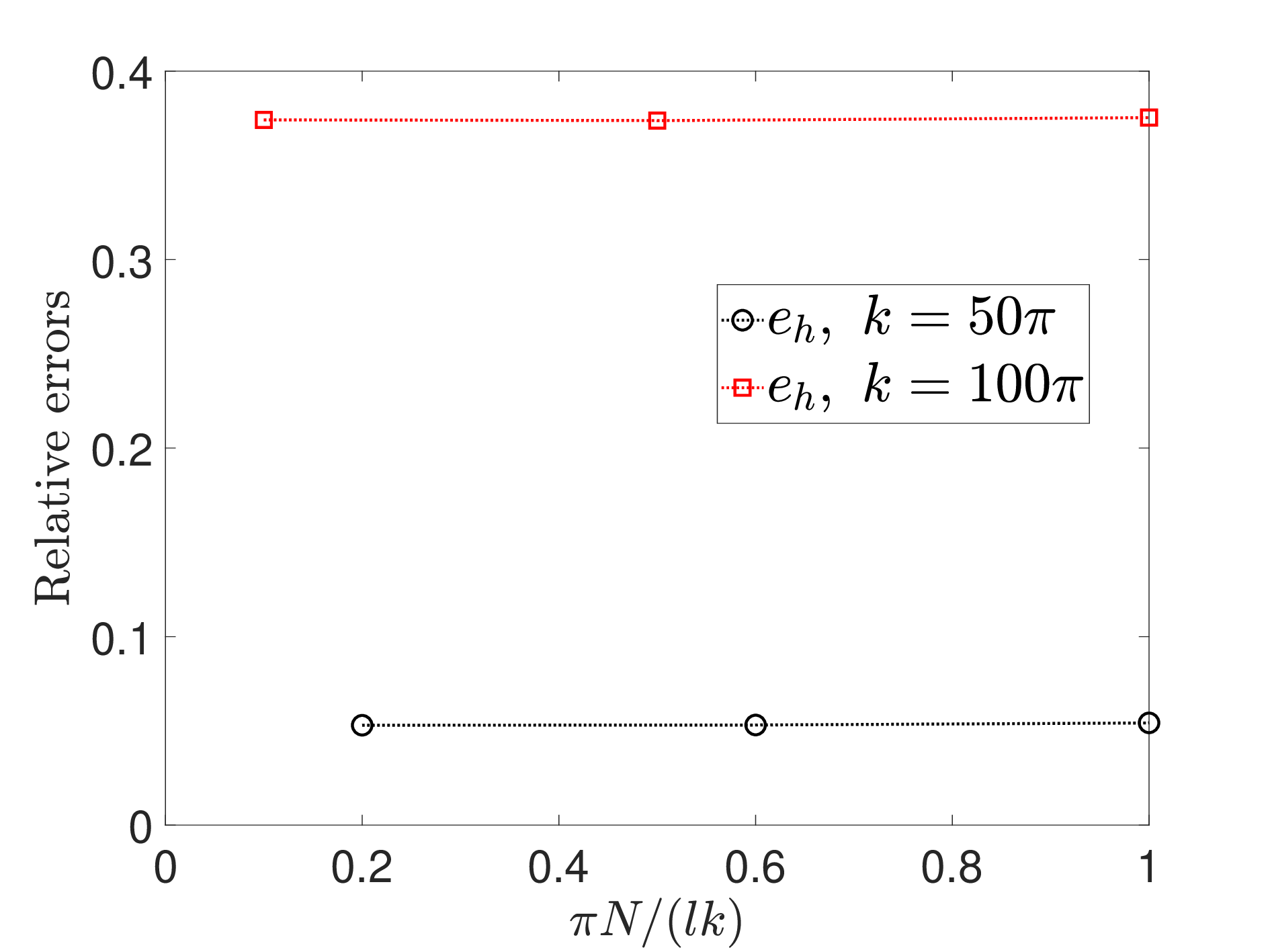}
	\includegraphics[width=0.45\textwidth]{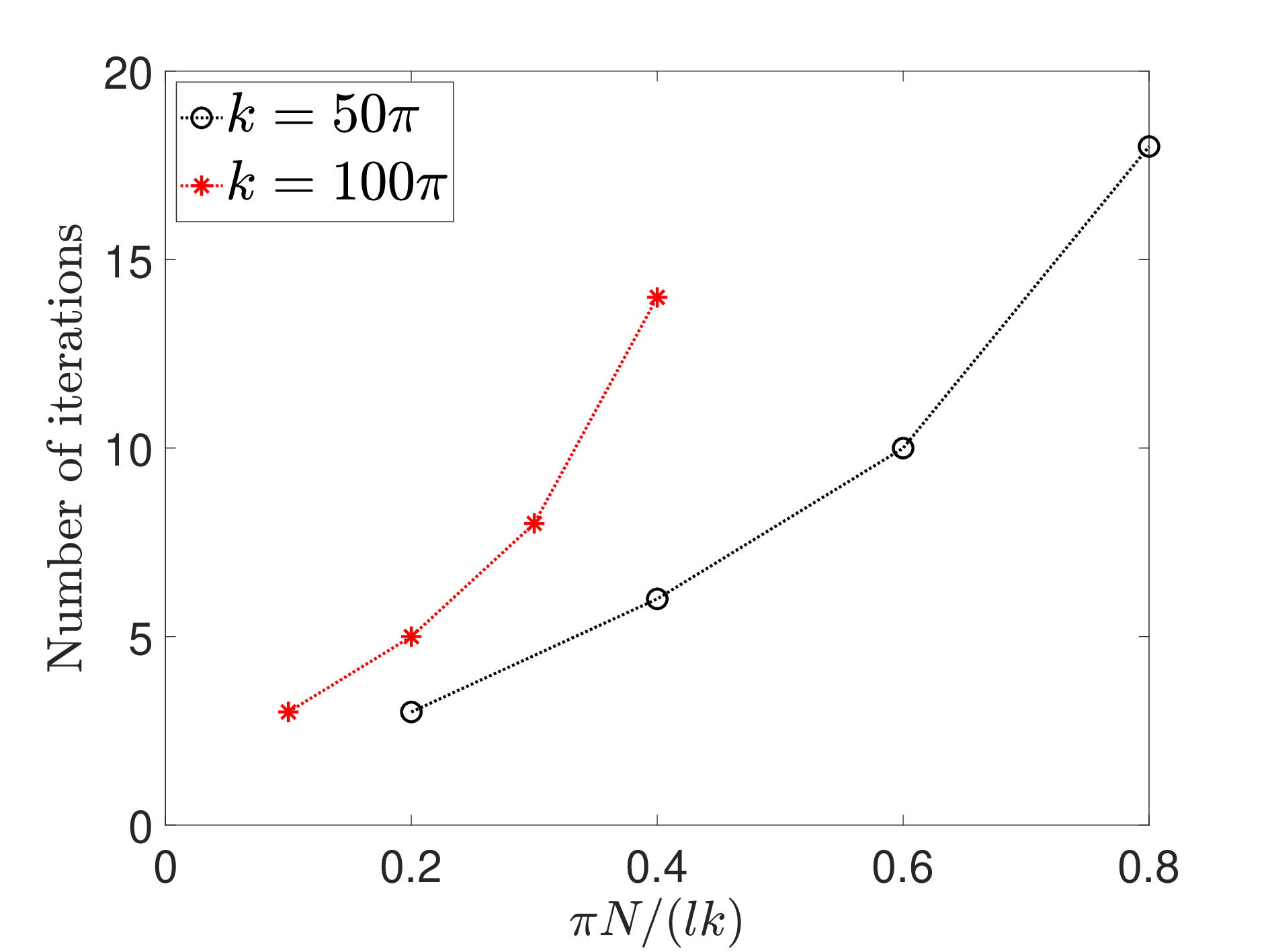}
	\caption{Example~1. Left: the relative errors $e_h$ of the PSTDDMb solutions (see Algorithm~\ref{alg5}) as the direct solver for $h=2l/6000$. Right: the number of iterations of the preconditioned GMRES algorithm using PSTDDMb as the preconditioner for $h=2l/6000$.}
	\label{fig_errors_Nosquares}
\end{figure}

	\textbf{Example 4.2.} We consider an example with heterogeneous wave number, which has been computed in \cite{cx} by STDDM. The wave number is $k(x)=\omega/c(x)$ where $\omega$ is the angular frequency and
	\begin{align*}
		c(x) = \frac43 - \frac23e^{-20(x_1^2+x_2^2)}
	\end{align*}
	is the velocity field. The source $f(x)$ is the narrow Gaussian point source
	\begin{align*}
f(x) = e^{-(\frac{4k}{\pi})^2((x_1-0.2)^2+(x_2-0.1)^2)}.
\end{align*}

	We solve the problem by using the PSTDDM (Algorithm~\ref{alg2}) and the PSTDDMb (Algorithm~\ref{alg5}) as the preconditioner in the GMRES, respectively. We choose $N=10$ and set $d_1=d_2=0.1$. The relative residue tolerance in the GMRES is set to be $10^{-10}$ as in \cite{cx}. Figure~\ref{fig_example3} shows the numbers of the preconditioned GMRES iterations for $k=100\pi$. The efficiencies of our PSTDDM and PSTDDMb as preconditioners in GMRES are quite similar to that of the STDDM proposed in \cite{cx}. We remark that, as a direct solver, the PSTDDM(b) does not give satisfactory results for such a problem with heterogeneous wave number, which is certainly not covered by our current theory.

\begin{figure}[htbp]
	\centering
	\includegraphics[width=0.45\textwidth]{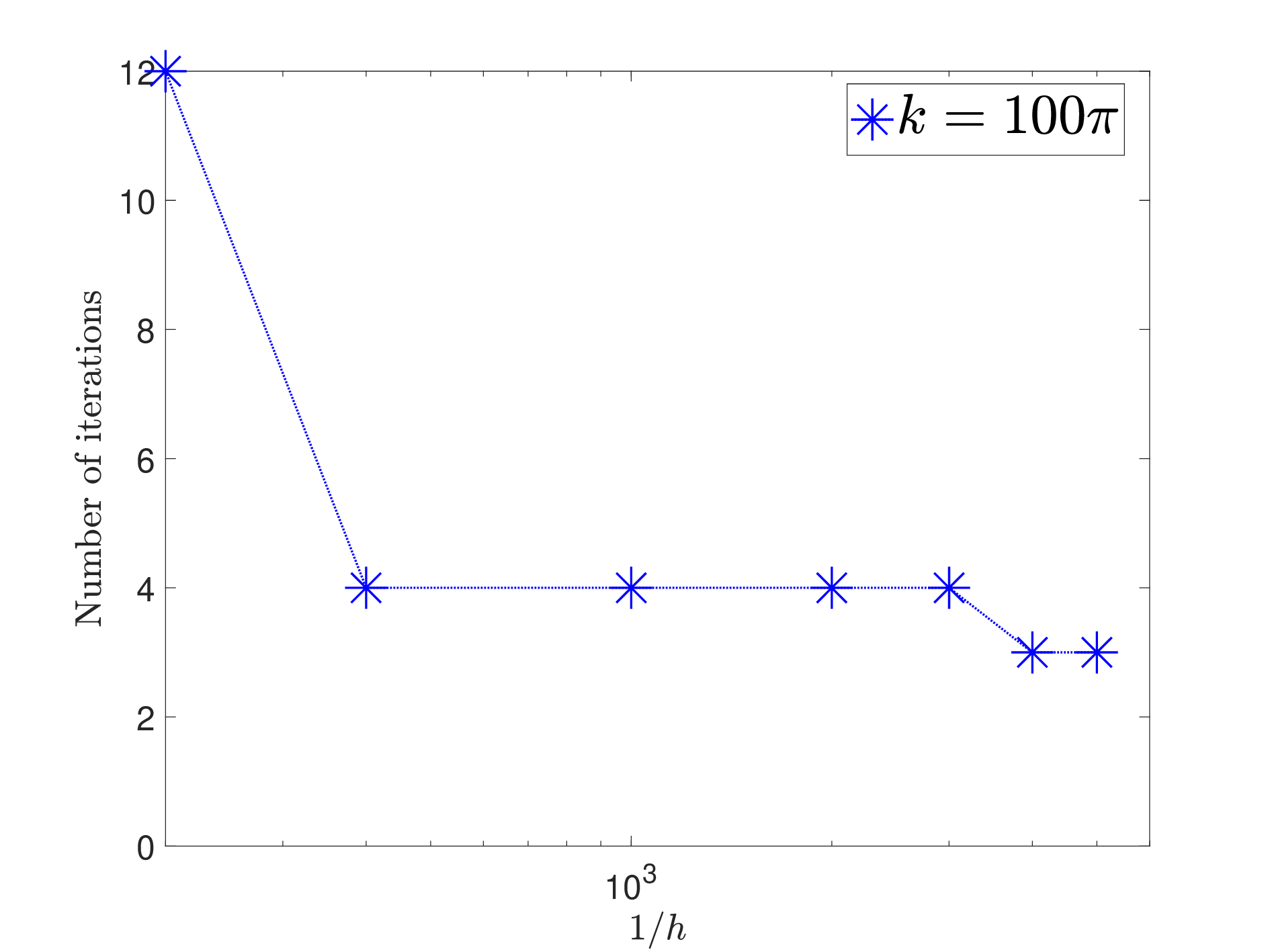}
	\includegraphics[width=0.45\textwidth]{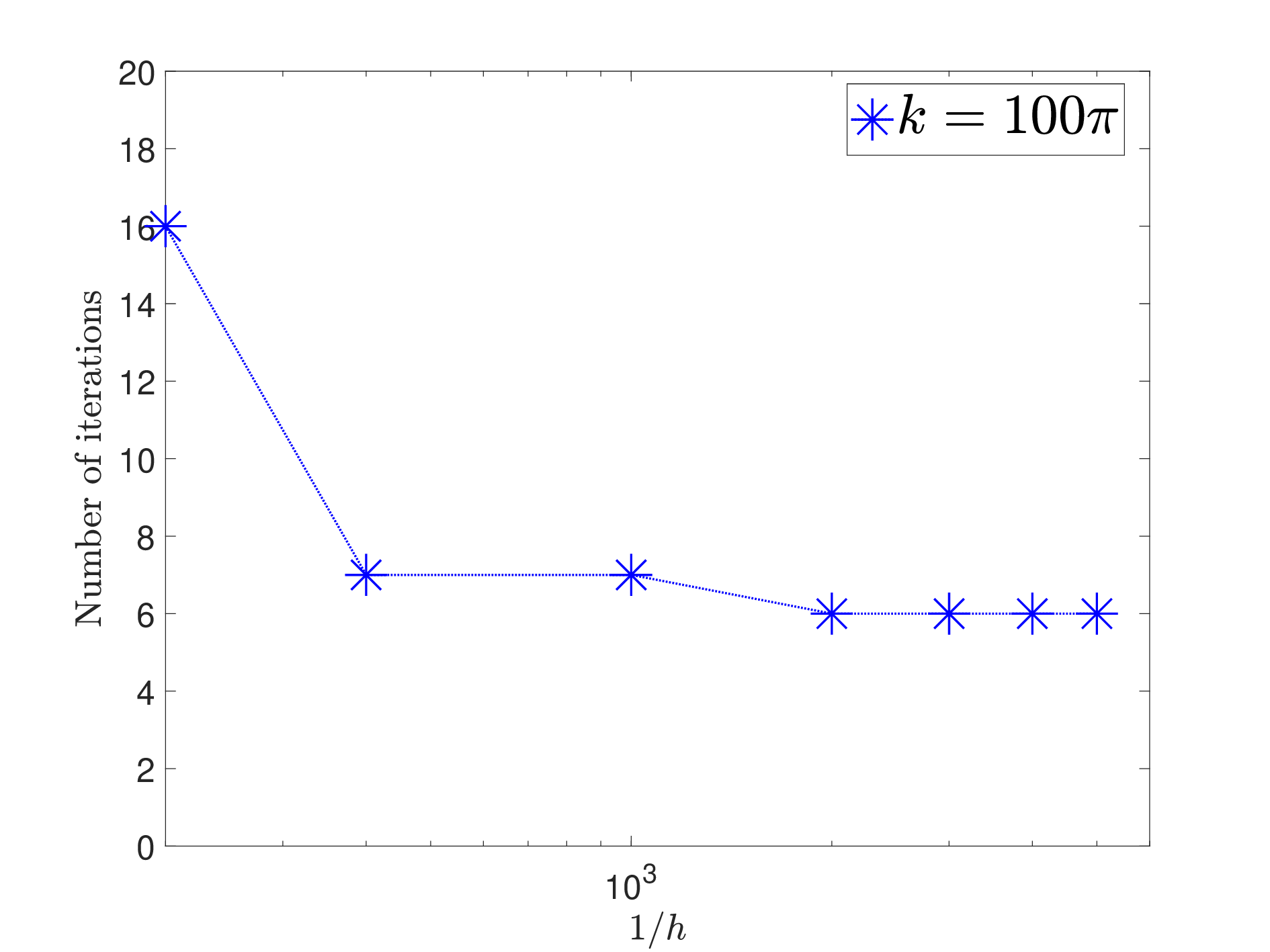}
	\caption{Example~3. Left : the number of iterations of the preconditioned GMRES algorithm using PSTDDM (Algorithm~\ref{alg2}) as the preconditioner for $N=10$. Right: the number of iterations of the preconditioned GMRES algorithm using PSTDDMb (Algorithm~\ref{alg5}) as the preconditioner for $N=10$.}
	\label{fig_example3}
\end{figure}

\bibliographystyle{siam}
\bibliography{ref_ipfemHH}

\end{document}